\newtheorem{theorem}{Theorem}[section]
\newtheorem{proposition}[theorem]{Proposition}
\newtheorem{corollary}[theorem]{Corollary}
\newtheorem{conjecture}[theorem]{Conjecture}
\newtheorem{example}[theorem]{Example}
\newtheorem{lemma}[theorem]{Lemma}
\newtheorem{remark}[theorem]{Remark}
\newtheorem{definition}[theorem]{Definition}
\newtheorem{question}[theorem]{Question}
\newcommand{\Z}{\mathbb{Z}}
\newcommand{\R}{\mathbb{R}}
\newcommand{\C}{\mathbb{C}}
\newcommand{\re}{\operatorname{Re}}
\newcommand{\im}{\operatorname{Im}}
\newcommand{\res}{\operatorname{Res}}
\newcommand{\analytic}{\operatorname{analytic}}
\begin{document}
\begin{titlepage}

\title{Screw Motion Invariant Minimal Surfaces from Gluing Helicoids}

\author
{Daniel Freese}
\address{Daniel Freese\\Department of Mathematics\\Indiana University\\
Bloomington, IN 47405
\\USA}

\date{\today}
\maketitle

\begin{abstract}
We consider families of embedded, screw motion invariant minimal surfaces in $\R^3$ which limit to parking garage structures.  We derive balance equations for the nodal limit and regenerate to obtain surfaces corresponding to solutions.  We thus prove the existence of many new examples with helicoidal or planar ends.   
\end{abstract} 

\end{titlepage}

\section{Introduction}

 In this paper, we study embedded periodic minimal surfaces in $\R^3$ which are invariant under a screw motion angle (SMIMS).  Specifically, we consider one-parameter families of SMIMS, varying the screw motion angle.  While some of the most famous minimal surfaces are of this type, there is a relatively small number of known examples.  HowFever all known examples limit to a parking garage structure.    

The first and standard example of a SMIMS is the helicoid, which is invariant under screw motions of any angle.  For almost two hundred years, that was the only known case.

The next family is that of Karcher's deformations of Scherk's saddle towers (\cite{ka4}).  In this example, as in all others we consider, the screw motion angle corresponds to the angle of deformation, which we call twisting.  As the twist parameter is pushed to its limit in either direction, the surface degenerates to a parking garage with two helicoidal components.

Another is a deformation of Fischer and Koch's translation-invariant surfaces (which we abbreviate \textit{FK} \cite{fk2}.  These surfaces have not been well-studied, but Hoffman and Karcher used Plateau methods to twist the surface.  An unpublished Diplom thesis \cite{lynk1} discusses Weierstrass data for the twisted surfaces but does not succeed at solving the period problem.  The Plateau construction of Fischer-Koch surfaces only produces embedded surfaces with $2(k+1)$ ends, where $k \geq 2$ is even.  It was hitherto unknown if any analogue to these surfaces exists if $k$ is odd.

The only known SMIMS with planar ends is deformation of the Callahan-Hoffman-Meeks (\textit{CHM}) surface of \cite{chk1}. Images suggested that these surfaces also degenerate to four helicoids as the twist parameter approaches its limit (see Figure \ref{fig:CHM002}). 

\def\fw{3in}
\begin{figure}[H]


 \begin{center}
   {\includegraphics[width=\fw]{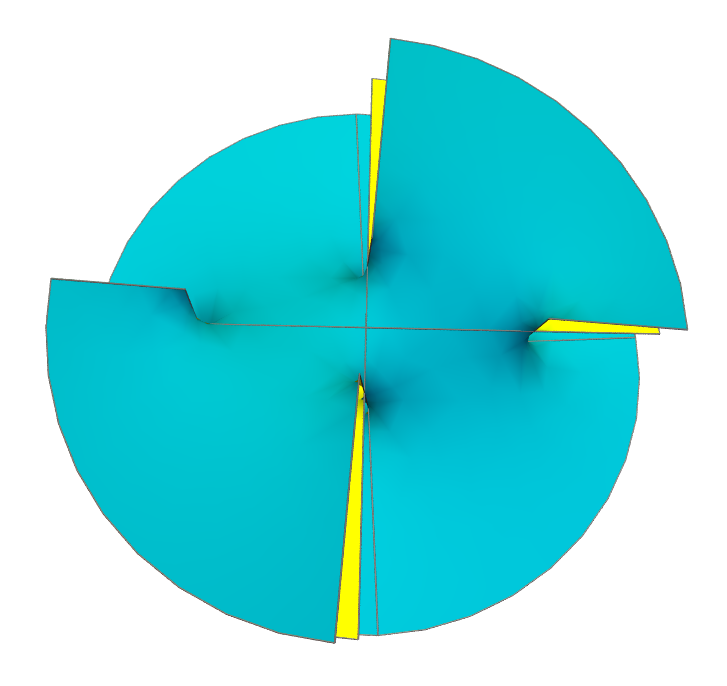}}
 \end{center}
 \caption{Twisted Callahan-Hoffman-Meeks surface near the parking garage limit}
 \label{fig:CHM002}
\end{figure}

Hoffman, Weber, and Wolf showed the existence of a family of screw motion invariant helicoids with genus 1 in the quotient \cite{whw1}.  The limit as the twist parameter increases to infinity is the genus one helicoid, and numerical evidence suggests that, as the parameter approaches 0, the surface degenerates to a parking garage structure.

Hoffman, Traizet, and White showed the existence of surfaces asymptotic to the helicoid of any genus \cite{htw}.  It is conjectured that these the are unique minimal surfaces with these properties. 

These surfaces can be categorized into three classes, which we can describe by the behavior of the ends.  The first consists of surfaces with helicoidal ends which become more twisted the farther they are from the parking garage limit.  We call these helicoid-type, as this class includes the genus $g$ helicoids.  The second class is that of Scherk-type surfaces.  Their ends become less twisted, and in the known cases eventually become vertical Scherk ends.  The third class consists of surfaces with planar ends.

Traizet and Weber, regenerating from noded surfaces, showed the existence of one-parameter families of SMIMS near the parking garage limit \cite{tw1}. This method recovers most of the above examples near the limit, as well as new examples. However, their result is restricted to surfaces with nodes in the limit arranged in a straight line.

This limitation motivates our paper, which generalizes the result to surfaces with planar ends and without the straight line assumption.  This allows us to recover all known examples near the parking garage limit.  However, we prove the existence of new families in each of the three aforementioned classes.

In the Scherk class, this method produces not only  the known Fischer-Koch surfaces but also surprising new examples currently not obtainable by Plateau construction.

\begin{theorem}\label{FK3}
For any $k \geq 2$, there exist SMIMS of Scherk class with $2k + 2$ ends and genus 1.  If $k$ is even, these correspond to twisted Fischer-Koch surfaces.  If $k$ is odd, they correspond to analogs of FK which lack a vertical straight line.
\end{theorem}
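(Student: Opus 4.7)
The plan is to apply the general regeneration theorem of the paper to a carefully chosen noded configuration with $(k+1)$-fold rotational symmetry. Since a Scherk-class SMIMS of genus $1$ with $2k+2$ ends should degenerate, in the parking garage limit, to $k+1$ helicoidal axes arranged so that each helicoid meets two neighbors along a cycle, I will position the nodes as follows: place $k+1$ vertical helicoidal axes at the vertices of a regular $(k+1)$-gon inscribed in a circle of radius $r>0$ in the horizontal plane, and choose the noded limit to consist of one node between each pair of adjacent helicoids (so $k+1$ nodes total, lying on the edges of the polygon). The cyclic arrangement of nodes is what produces genus $1$ after regeneration, while each helicoid contributes two helicoidal ends for a total of $2(k+1)=2k+2$ ends in the Scherk class.

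The first step is to impose a $\mathbb{Z}_{k+1}$ rotational symmetry on the configuration (rotation by $2\pi/(k+1)$ about the vertical axis through the center of the polygon), which collapses the balance equations for all $k+1$ nodes into a single complex equation. Next I would parameterize the twist parameter $\sigma$ (screw motion angle) in a small neighborhood of the parking garage limit $\sigma=0$ and, using the residue/force balance equations derived earlier in the paper, solve for the radius $r=r(\sigma)$ and the remaining free parameters (such as the node weights) as an analytic function of $\sigma$. At $\sigma=0$, the symmetry reduces the balance system to one real equation in one real unknown, and an application of the implicit function theorem then yields a one-parameter family of solutions for small $\sigma \neq 0$. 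Invoking the regeneration theorem then produces a one-parameter family of SMIMS with the desired end count and genus.

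The distinction between even and odd $k$ arises when I check which additional discrete symmetries the configuration admits. When $k$ is even, $k+1$ is odd, and the regular $(k+1)$-gon with a vertex-centered node pattern admits a reflection through a vertical plane passing through one vertex and the midpoint of the opposite edge; this reflection lifts to a symmetry of the regenerated surface that fixes a vertical line, recovering the classical Fischer-Koch vertical straight line. When $k$ is odd, $k+1$ is even, and while the polygon still has reflection symmetries, these interchange nodes with vertices (or vertices with vertices) in a way that no longer fixes a vertical straight line on the minimal surface; this yields the genuinely new analogs of FK claimed in the statement. I would verify this by checking the action of each candidate reflection on the Weierstrass data at the limit.

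The main obstacle I anticipate is solving the balance equations, i.e.\ verifying that the reduced one-variable system at $\sigma=0$ has a non-degenerate solution (a simple root with nonzero derivative) so that the implicit function theorem actually applies. Writing the force between two adjacent helicoids as a residue computation and summing the contributions from the rotational orbit, one expects a relation of the form $F(r)=0$ whose solvability depends on the geometry of $(k+1)$-gons; a standard tool here is to exploit positivity of the dominant nearest-neighbor term against the decaying sum of farther contributions, which is how Traizet-Weber-style arguments typically close. A secondary technical point is confirming embeddedness of the regenerated surfaces, which should follow from the transverse crossing configuration at the nodes and the smallness of the regeneration parameter, but must be checked explicitly for odd $k$ where the lack of a reflection line removes an easy embeddedness shortcut.
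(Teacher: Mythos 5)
Your proposed configuration is wrong in a way that affects both the genus count and the parity distinction. You place $k+1$ helicoidal axes at the vertices of a regular $(k+1)$-gon with $(k+1)$-fold symmetry and no central helicoid. In the paper's framework, the ``nodes'' of the degenerate Riemann surface are located \emph{at} the points $p_j$ where the helicoidal axes sit (each $p_j$ becomes a neck that pinches off), not on the edges between them, so a $(k+1)$-gon of charges $\varepsilon_j = -1$ is exactly the Karcher-Scherk configuration of order $k+1$. Plugging into the paper's genus formula $g' = |N|(n - k_{\mathrm{dih}} - 1)/k_{\mathrm{dih}} + 1$ with $n = k+1$, $k_{\mathrm{dih}} = k+1$, $|N| = k+1$ gives $g' = 0$, not $1$. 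To get genus $1$ you must add a central helicoid at the origin: the paper's Fischer-Koch configuration (Proposition~\ref{FK}) has $k$ points at radius $p_{11} = \sqrt{(k+1)/2}$ with $k$-fold dihedral symmetry, all charge $-1$, plus one point at $p_0 = 0$ with $\varepsilon_0 = -1$, so $n = k+1$, $|N| = k+1$, and then the genus formula gives $g' = 1$. The balance equations then collapse to the single equation $p_{11} + \varepsilon_0/p_{11} - (k-1)/(2p_{11}) = 0$, with explicit solution and an elementary non-degeneracy check $\partial F_{11}/\partial p_{11} > 0$; no implicit-function-theorem bootstrapping is needed at this stage.

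Your symmetry argument for the vertical line is also off. A reflection of the configuration in the plane corresponds, via the induced symmetry $\rho$ of the Weierstrass data, to a rotation about a \emph{horizontal} line on the surface, not a vertical one. The vertical straight line in the Fischer-Koch surfaces arises, per the paper's Corollary in Section 8.1, from the composition $\sigma^{-\varepsilon_0} \circ \theta^{k/2}$, which is a vertical $\pi$-rotation and exists only when the dihedral order $k$ is even and there is a central helicoid ($\varepsilon_0 \neq 0$) for the rotation to have a vertical axis as its fixed set. In your setup the dihedral order is $k+1$, so the parity would need to be $k$ odd to make $k+1$ even---the opposite of the theorem's claim. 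The correct bookkeeping, using the paper's configuration with dihedral order $k$, gives: $k$ even yields the vertical line and recovers known Fischer-Koch surfaces; $k$ odd gives the new FK analogs without one.
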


In the planar class, there exist surfaces distinct from the Calahan-Hoffman-Meeks family.

\begin{theorem}\label{CHM7}
There exists a family of SMIMS with planar ends and dihedral symmetry 2 and genus 7.
\end{theorem}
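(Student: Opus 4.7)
The plan is to apply the regeneration machinery described in the introduction (and developed earlier in the paper) to a specific, symmetric nodal configuration tailored to produce planar ends, genus $7$ in the quotient, and dihedral symmetry of order $2$. In broad strokes, I would choose a node configuration in the plane, verify the balance equations have a non-degenerate solution compatible with $D_2$ symmetry, and then invoke the regeneration result to produce a one-parameter family of SMIMS converging to this parking garage limit.

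First, I would set up the combinatorial data of the limit. Since the target surface has planar ends and is modeled on a degeneration to helicoids, each helicoidal axis in the nodal limit contributes a prescribed amount to the genus count and end count. I would determine the number of axes (and hence the number of planar ends after regeneration) that gives genus $7$ in the quotient, using the standard Euler characteristic bookkeeping associated with the noded Riemann surface underlying a parking garage limit with planar-type nodes. I would then distribute these axes as orbits of a $D_2$ action in the horizontal plane — the action generated by a rotation by $\pi$ and one reflection — choosing orbit structures that are manifestly not collinear (so that the Traizet--Weber result of \cite{tw1} does not already cover the configuration) and that do not secretly upgrade to a larger symmetry group.

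Second, I would write down the balance equations from the general framework developed earlier in the paper in this symmetric setting. The $D_2$ action collapses the equations to a small system in the few free parameters remaining — essentially the positions of representative axes in each orbit, together with their signed weights governing the handedness of the local helicoids and the residues enforcing planarity of the ends. Planarity of the ends amounts to requiring the vertical period/flux contributions at each end to vanish, which I would add as explicit constraints compatible with the symmetry. I would then solve the reduced system either in closed form (if the symmetry is strong enough) or by a short numerical computation, and verify that the Jacobian at the solution has full rank modulo the trivial symmetries.

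The hardest step will be the non-degeneracy check. Existence of \emph{some} $D_2$-symmetric solution to the balance equations is typically routine via symmetry-driven reduction and a continuity or intermediate-value argument, but applying the regeneration theorem requires that the linearization of the full balance system (not just the symmetric reduction) be non-singular at the chosen solution, up to the obvious translational and rotational degeneracies. Establishing this will likely require either an explicit block-diagonalization of the linearized operator by isotypic components of the $D_2$ action — showing non-degeneracy on each symmetric component separately — or a careful numerical verification of the determinant. Once this non-degeneracy is in hand, the regeneration theorem provides the one-parameter family, and the asserted genus, end count, and symmetry all follow from the construction.
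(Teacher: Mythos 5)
Your strategy --- find a balanced, non-degenerate $D_2$-symmetric configuration with vanishing total charge and invoke Theorem~\ref{main} --- is exactly what the paper does: the configuration used (Proposition~\ref{ExoticCHM}, Remark~\ref{CHMex}) has $k=2$, three points on the positive real axis with alternating charges $+1,-1,+1$, one point on the perpendicular axis with charge $-1$, no central point, and their $D_2$-images, giving $n=8$, genus $n-1=7$, and $N=0$; balance is established by an iterated intermediate-value argument and non-degeneracy is checked by computer algebra, matching your proposed fallback. One small clarification: planarity of the ends is not an extra constraint to impose alongside the balance equations --- in this framework it holds automatically once the charges are chosen to sum to zero, since $N=0$ forces the residues of $dh$ at $\infty_{1,2}$ to vanish, and the quotient always has exactly two ends regardless of the number of axes (the axis count controls the genus, not the end count).
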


In the helicoidal class, a new surface of interest has genus 2 and two ends yet has a different limit than the known genus two helicoid.

\begin{theorem}\label{CHM+}
There exists a family of SMIMS asymptotic to a helicoid with dihedral symmetry 4.
\end{theorem}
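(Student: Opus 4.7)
The plan is to realize Theorem~\ref{CHM+} as an application of the general regeneration theorem of this paper to a carefully chosen $D_4$-symmetric noded configuration. Since the target surface has two helicoidal ends, genus $2$, and dihedral symmetry of order $4$ (with symmetry axis transverse to the screw axis), the nodes in the quotient --- each corresponding to a helicoidal piece in the parking garage limit --- must sit $D_4$-equivariantly. The natural candidate is a central axis, carrying the two helicoidal ends of the limit surface, surrounded by four equally spaced nodes at the vertices of a square. An Euler characteristic computation, analogous to the ones performed elsewhere in the paper for the Scherk and planar classes, should confirm that gluing this noded configuration produces genus exactly $2$ and precisely two helicoidal ends.

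Next I would solve the balance equations derived in the general setup. Imposing $D_4$ symmetry collapses the force-balance system to a single scalar equation in the ratio $r$ of the outer square's radius to the fundamental period, since the four outer nodes carry equal weights and the central node is automatically balanced by symmetry. After writing the residues explicitly as linear combinations of $\cot$ or $\coth$ terms in the node positions, I would obtain an equation $F(r)=0$ and establish a root by an intermediate-value argument, comparing the limits $r \to 0$ (outer helicoids collapse onto the center) and $r \to \infty$ (outer helicoids decouple from the center). With a balanced configuration in hand, the regeneration theorem produces the desired one-parameter family of SMIMS, provided the non-degeneracy condition --- surjectivity of the differential of the period/force map restricted to the $D_4$-invariant subspace --- is verified. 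Thanks to the symmetry reduction to a single parameter, this non-degeneracy amounts to $F'(r) \neq 0$ at the root, which should be checkable directly from the explicit formula for $F$.

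I would then argue that this family is genuinely distinct from the Hoffman--Weber--Wolf and Hoffman--Traizet--White helicoidal families: those previously known helicoid-type surfaces degenerate to parking garage limits with \emph{collinear} nodes, and so cannot admit the $D_4$ symmetry present here. Embeddedness, asymptotic helicoidal behavior, and preservation of symmetry all follow from the general properties guaranteed by the regeneration construction, once the symmetric initial configuration is taken as input.

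The principal technical obstacle is the existence of a root of the balance equation together with the transversality needed to apply the implicit function theorem; a subtle point is that $D_4$ symmetry automatically enforces the horizontal component of balance at the outer nodes, so one must check that the remaining scalar equation actually has a solution and not merely that the symmetry-forced equations are satisfied. All other ingredients --- symmetry, Euler characteristic bookkeeping, and asymptotic behavior --- are essentially automatic once the balance configuration and its non-degeneracy are established.
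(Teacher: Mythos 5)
Your proposed nodal configuration is wrong, and the resulting arithmetic does not match the claim. You place a single central node (which you describe as ``carrying the two helicoidal ends'') surrounded by one $D_4$-orbit of four nodes, for a total of five nodes. But the central node is just one more helicoidal column in the parking garage; the helicoidal ends of the quotient surface live at $\infty_1, \infty_2$, and their winding number is governed by $N = \sum_j \varepsilon_j$. With five nodes you get $N \in \{\pm 3, \pm 5\}$, never $N = 1$, so those surfaces are not asymptotic to a (singly-wound) helicoid. The genus also comes out wrong: with $n = 5$ and $k = 4$, the genus formula in the paper gives $g' = |N|(n-k-1)/k + 1 = 1$, not $2$.

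The configuration the paper actually uses has \emph{nine} nodes: one at the origin with $\varepsilon_0 = +1$, a $D_4$-orbit of four nodes on the coordinate axes with $\varepsilon_{11} = +1$, and a $D_4$-orbit of four nodes on the diagonals with $\varepsilon_{21} = -1$. The two orbits of opposite charge cancel, giving $N = \varepsilon_0 = 1$, and the genus formula yields $g' = (9-5)/4 + 1 = 2$. Consequently the symmetry-reduced balance system has \emph{two} free radial parameters $p_{11}, p_{21}$, not one, and the paper shows existence by an intermediate-value argument on the pair (Proposition~\ref{ExoticCHM}, the $n_1 = 1$, $\varepsilon_0 = 1$, $k \geq 4$ case) and checks non-degeneracy of the resulting $2 \times 2$ Jacobian explicitly in Remark~\ref{nond}. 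Incidental points: the balance equations here involve $\overline{p_j}$ and $1/(p_j - p_k)$ (reducing to factors of the form $k x^{k-1}/(x^k \pm y^k)$ under $D_k$-symmetry), not $\cot$ or $\coth$; and the intermediate-value step at $p_{11} \to 0$ is delicate because the sign of $F_{11}$ does not change if the other variables are held fixed --- the paper lets the other radii degenerate proportionally to extract the correct sign. Until you correct the configuration, the rest of the argument cannot go through.
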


This suggests the following 

\begin{conjecture} \label{g2hel}
There exist multiple translation-invariant genus 2 helicoids and multiple genus 2 helicoids.
\end{conjecture}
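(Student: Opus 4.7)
The plan is to realize the genus 2 helicoids claimed by the conjecture as opposite endpoints of one-parameter families of SMIMS: one endpoint being the parking garage limit supplied by Theorem \ref{CHM+} (together with an analog of the Hoffman-Weber-Wolf construction extended to genus 2), and the other being either a translation-invariant or properly embedded genus 2 helicoid.

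First I would extend the family from Theorem \ref{CHM+} beyond the small-twist regime where it is constructed. A standard continuity argument based on the implicit function theorem should provide a smooth extension of the arc in moduli space, provided that the linearized period map, restricted to deformations preserving the dihedral symmetry of order 4, remains surjective along the family. This is parallel to the continuity method used in \cite{whw1} for genus 1, with the symmetry group enlarged from order 2 to order 4.

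Second I would analyze the limiting behavior at the far end of the arc. As the screw motion period diverges, the quotient unfolds to a properly embedded surface in $\R^3$ asymptotic to a helicoid; as the twist angle tends to zero, the surface becomes translation-invariant in $\R^3 / \langle T \rangle$. In either case, compactness theorems for embedded minimal surfaces of bounded genus and locally bounded curvature should yield a genus 2 limit asymptotic to a helicoid, provided the topology and ends can be tracked along the sequence. To distinguish the resulting limits from the existing genus 2 helicoid of \cite{htw} and from the natural genus 2 extension of the \cite{whw1} family, I would appeal to the dihedral symmetry of order 4, which is preserved along the continuation: the known surfaces arise from noded configurations along a straight line (in the sense of \cite{tw1}) and possess only dihedral symmetry of order 2, so the symmetry group is an invariant that separates the two classes.

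The principal obstacle is the global continuation. One must rule out degenerations such as handle collapse, loss of embeddedness, or unbounded curvature arising before the helicoid limit is reached, and one must control the behavior of the helicoidal ends throughout. This typically demands a priori geometric estimates specific to the symmetry class, an enumeration of the admissible noded degenerations via balance equations in the spirit of \cite{tw1}, and a solution of the resulting period problem at each candidate degeneration. Without such control, even a formal limit of the extended family need not produce an embedded minimal surface of the expected topology, and this is exactly why the statement remains a conjecture rather than a theorem.
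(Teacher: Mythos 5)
Conjecture \ref{g2hel} is stated in the paper precisely as a conjecture; no proof is given, so there is no proof to compare against. The paper's motivation is Theorem \ref{CHM+} together with the discussion of long-term behavior in the final section: the \textit{CHM}$+$ family (the $k=4$, $n_1=n_2=1$, $\varepsilon_0=1$ case of Proposition \ref{ExoticCHM}) has genus 2 and two helicoidal ends in its natural quotient, but its parking garage limit is a nine-node $D_4$-symmetric configuration rather than the collinear configuration underlying the known genus 2 helicoid, so the expectation is that a global continuation in $t$ would terminate at a different genus 2 helicoid. Your sketch reproduces this heuristic faithfully, and your closing paragraph correctly names the actual gap --- the absence of a global continuation result ruling out handle collapse, curvature blow-up, and loss of embeddedness before the helicoid limit is reached --- which is exactly why the paper records this as a conjecture rather than a theorem.

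Two finer points worth flagging in your roadmap. The translation-invariant genus 2 helicoids in the first clause of the conjecture are not obtained by letting $t\to\infty$; within the SMIMS family they would arise at suitable rational twists, or via the natural quotient construction of Section 8.2, and locating them along the continuation is a separate task from producing the surface asymptotic to a helicoid. Also, distinguishing the limiting surface by the order of its dihedral symmetry is only a lower-bound argument: one must additionally rule out that the limit acquires extra symmetry and coincides, after rescaling, with the genus 2 helicoid of \cite{htw}. Neither point invalidates your sketch, but both belong in the list of obstacles alongside the ones you enumerate.
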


While most of our examples have dihedral symmetries, there is evidence of non-symmetric surfaces.

\begin{conjecture}\label{nosym}
There exist SMIMS which are only symmetric with respect to screw motions.
\end{conjecture}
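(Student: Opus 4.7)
The plan is to attack Conjecture \ref{nosym} within exactly the same framework used to establish Theorems \ref{FK3}--\ref{CHM+}: set up a nodal (parking garage) limit configuration, derive the balance equations, find a solution that breaks every symmetry except the defining screw motion, and then apply the regeneration result to produce an actual one-parameter family of SMIMS near that limit. The advantage of working at the parking garage limit is that the full PDE problem reduces to an algebraic/combinatorial one on the positions and weights of the helicoidal axes in the quotient cylinder, so asymmetry can be arranged at the level of node data rather than at the level of minimal surfaces.

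First I would fix the class (most naturally the Scherk class, where the node count and balancing are best understood) and choose the simplest genus and end-count consistent with asymmetry; a candidate is a configuration of four or more helicoidal nodes whose horizontal positions in the quotient are \emph{not} invariant under any nontrivial element of the dihedral group acting on the cylinder. I would then write out the balance equations developed in the paper for this configuration, treating the node positions (complex parameters in the quotient) and the residues/weights as unknowns. The codimension count should be favorable: the balance equations form a finite system whose symmetric solution locus is typically isolated, so perturbing the combinatorial data (adding an extra node, shifting end multiplicities) generically yields a moduli space of positive dimension on which the symmetric locus has positive codimension.

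The main step would be to exhibit, analytically or with rigorous interval arithmetic, at least one solution of these balance equations lying off the symmetric locus. I would try two complementary routes: (i) a bifurcation argument, starting at a known symmetric solution (for instance one underlying a Fischer--Koch or CHM analog produced by Theorem \ref{FK3} or \ref{CHM7}) and showing that the linearization of the balance map has a kernel direction which breaks the dihedral symmetry, then applying an equivariant Lyapunov--Schmidt reduction to continue into the asymmetric regime; (ii) a direct construction with an odd number of asymmetrically placed nodes where no nontrivial dihedral element can permute them. Once an asymmetric noded solution is in hand, the regeneration theorem of the paper produces a corresponding family of SMIMS for small values of the twist parameter, and the symmetry of the nodal data is inherited by the regenerated surface, so absence of dihedral symmetry in the limit guarantees absence in the nearby surfaces as well.

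The main obstacle I expect is step (i)/(ii): the balance equations in the existing theorems have been solved by heavily exploiting dihedral symmetry to cut the number of unknowns down to one or two, and without that reduction the system is a genuinely multivariable transcendental problem in complex node positions with no obvious closed form. A clean bifurcation-from-symmetry argument would require identifying a solution at which the equivariant linearization has a non-trivial kernel in a non-symmetric representation, which is a delicate spectral computation; failing an analytic proof, one would have to fall back on a computer-assisted existence result, verifying the hypotheses of a quantitative implicit function theorem at a numerically computed asymmetric solution. Either way, regeneration and embeddedness should then follow from the general machinery already in place in the paper, so the conjecture reduces cleanly to producing one asymmetric zero of the balance map.
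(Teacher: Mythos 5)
This statement is a \emph{conjecture} in the paper, not a proven theorem: the paper offers only a numerically discovered balanced configuration with no dihedral symmetry (Figure in Section 9.1), notes that its non-degeneracy is also only numerical, and observes that \emph{if} it is non-degenerate then Theorem \ref{main} would produce the desired SMIMS. Your proposal follows the same skeleton the paper has in mind --- produce an asymmetric balanced, non-degenerate configuration and regenerate --- and you are right that the entire difficulty collapses onto exhibiting and rigorously certifying such a zero of the balance map. In that sense the plan is aligned with the paper, and your suggestion of a computer-assisted quantitative implicit function theorem at a numerically computed solution is precisely the sort of thing that would promote the conjecture to a theorem; the bifurcation-from-symmetry route you sketch is a genuinely different idea than anything in the paper and could, in principle, give a softer analytic proof, but you correctly flag that the required equivariant spectral computation is far from routine.

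There is one additional gap you pass over too quickly: the claim that ``absence of dihedral symmetry in the limit guarantees absence in the nearby surfaces.'' What Theorem \ref{main} gives is a family $M_t$ invariant under $S_t$; you still need to rule out that $M_t$ carries \emph{other} isometries (for instance a rotation about a vertical or horizontal axis, or an orientation-reversing symmetry) not visible as a symmetry of the nodal configuration. The argument should be that any extra isometry of $M_t$, for small $t$, would have to permute the rescaled helicoidal necks and hence descend, after passing to the limit, to a Euclidean symmetry of the configuration $\{p_j\}$ compatible with the charges $\varepsilon_j$; since the configuration was chosen to admit none, no such isometry can exist for small $t$. This is plausible and likely follows from the convergence statement in part (3) of Theorem \ref{main}, but it is a step that needs to be made explicit --- a noded limit without dihedral symmetry does not by itself preclude symmetries of the regenerated surface. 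Finally, note that a direct construction with ``an odd number of asymmetrically placed nodes'' would also need its charges chosen so that the balance equations actually admit a solution there; asymmetry of placement alone does not guarantee the system is solvable off the symmetric locus, which is why the paper, too, ultimately leaves this as a conjecture supported by numerics.
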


So far, all known SMIMS can be obtained by this method, which inspires the following

\begin{question}
Do all SMIMS limit to a parking garage structure?
\end{question}

We also do not know which translation invariant minimal surfaces can be twisted into SMIMS.  We know that twisting is not possible for Riemann's minimal surface, and we lack definite answers for many others.

\section{Main Results}\label{mains}

We now introduce some of the key concepts and our main theorem.

\begin{definition}\label{config}
A configuration consists of a collection of points $p_1, ..., p_n$ in the complex plane, where $n \geq 2$, together with corresponding charges $\varepsilon_j = \pm 1$.  Consider the forces, $F_1, ..., F_n$ defined by,

\begin{align*}
F_j = \overline{p_j} + \sum_{k \neq j} \frac{\varepsilon_k}{p_j - p_k}.
\end{align*}

We say that a configuration is balanced if $F_j = 0$ for all $j$.
\end{definition} 

Configurations will correspond to the locations and orientations of helicoids in parking garage limits.

\begin{example}
The only balanced configuration consisting of two points (up to rotation) is $p_1 = \frac1{\sqrt{2}}$, $p_2 = -\frac1{\sqrt{2}}, \varepsilon_1 = \varepsilon_2 = -1$.
\end{example}

\begin{remark}
We can also consider the balanced one-point configuration, which will correspond to a helicoid
\end{remark}

Note that the property of being balanced is invariant under rotating the configuration by any angle about the origin.  
Consequently, we may always assume that $p_1 >0$. 

\begin{definition}
We say that configuration is non-degenerate if the $2n \times 2n $ matrix 

\begin{align*}
\begin{pmatrix}
 \frac{\partial   \re F_j}{\partial x_l} & \frac{\partial \re F_j}{\partial y_l}\\
 \frac{\partial \im F_j}{\partial y_l} & \frac{\partial \im F_j}{\partial y_l}
\end{pmatrix},
\end{align*}

has rank $2n - 1$ at $(p_1, ..., p_n)$, where $(x_j, y_j) = (\re p_j, \im p_j)$ and $j, l = 1, ..., n$.
\end{definition}  

This is the highest possible rank since the balance equations are rotation-invariant.  Let

\begin{align*}
N = \sum_{j=1}^n \varepsilon_j.
\end{align*}
Our main theorem is as follows:

\begin{theorem}
\label{main}
Suppose that $p_1, ..., p_n$ is a balanced and non-degenerate configuration.  Then there exists $\delta>0$ and a one-parameter family $\{M_t\}_{0<t<\delta}$ of embedded minimal surfaces such that:

\begin{enumerate}
    \item the surface $M_t$ is invariant under the screw motion $S_t$ with translation part $(0, 0, 2\pi)$ and angle $2\pi t$, and the quotient $M_t/S_t$ has genus $n - 1$ and two ends.  If $N \neq 0$, these ends are helicoidal with winding number $1 + Nt$, and if $N = 0$ they are planar with vertical normal.
    \item As $t \to 0$, in a neighborhood of $\frac1{\sqrt t} (\re p_j, \im p_j, 0)$, $M_t$ converges to a right or left helicoid of period $(0, 0, 2\pi)$, depending on whether $\varepsilon_j = 1$ or $-1$.
    \item If we rescale $M_t$ horizontally by $\sqrt{t}$, the new surfaces $\mathcal M_t$ (no longer minimal) converges (as sets) to the surface $\mathcal M_0$, defined as follows: consider the multi-valued function
    
    \begin{align*}
    f(z) = \sum_{j=1}^n arg(z - p_j), \quad z \in \C - \{p_1, ..., p_n\}.
    \end{align*}
    
    $\mathcal M_0$ is the union of the multigraph of $f$, that of $f + \pi$, and the vertical lines through each $p_j$.
\end{enumerate}
\end{theorem}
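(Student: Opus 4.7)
The plan is to carry out a node-opening regeneration in the spirit of Traizet \cite{tw1}. I would realize $M_t/S_t$ as the image of the Weierstrass representation applied to a smooth family $\{R_t\}$ of Riemann surfaces of genus $n-1$ with two marked ends, degenerating as $t \to 0$ to a noded surface $R_0$ built from $n$ Riemann spheres, one per point $p_j$, each carrying the Weierstrass data $(G, dh)$ of a right or left helicoid according to the sign $\varepsilon_j$. The $n$ spheres are joined by nodes so that plumbing produces the required genus and the correct number of ends, and the positions $p_j$, together with the node-opening parameters, are carried along as unknowns depending on $t$.

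The core step is to formulate and analyze the period problem. Screw motion invariance of $M_t$ with translation $(0,0,2\pi)$ and angle $2\pi t$ imposes real and complex period conditions on $G\,dh$, $dh/G$, and $dh$ around each vanishing cycle of $R_t$. The main analytic obstacle, and the one I expect to be most delicate, is a controlled asymptotic expansion of these periods in $t$: after dividing by an appropriate power of $t$, the $j$th equation should reduce, in the limit, to $F_j = 0$. The term $\overline{p_j}$ arises from the $2\pi t$ rotational part of the ambient screw motion $S_t$, while the Coulomb interaction $\sum_{k \neq j} \varepsilon_k/(p_j - p_k)$ arises from residues at the opening nodes of products of helicoidal Weierstrass forms, with $\varepsilon_k$ tracking handedness.

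With that reduction in hand, the balance hypothesis kills the leading-order obstruction, and the non-degeneracy hypothesis (rank $2n-1$, modulo the one-parameter rotational freedom that is fixed by $p_1 > 0$) makes the linearization of the period map surjective onto its image. An implicit function theorem then produces, for each small $t > 0$, a unique solution of the full period problem, yielding the family $\{M_t\}_{0 < t < \delta}$.

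The geometric items are then read off from the resulting Weierstrass data. The winding of $G_t$ around each end equals one plus the total twisting contribution $\sum_j \varepsilon_j t = Nt$, giving helicoidal ends when $N \neq 0$ and planar ends with vertical normal when $N = 0$, establishing (1). For (2) and (3), uniform control of $(G_t, dh_t)$ on compacta away from the nodes shows that zooming in by $1/\sqrt t$ near each $p_j$ recovers a single helicoid of the prescribed handedness, while the horizontal rescaling by $\sqrt t$ collapses the helicoidal cores onto the vertical lines through the $p_j$, and the remaining graphical sheets are controlled by $\arg G_t \to \sum_j \arg(z - p_j) \bmod \pi$, reproducing the two sheets $f$ and $f+\pi$ of $\mathcal{M}_0$. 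Embeddedness for small $t$ then follows from embeddedness of the limit parking garage together with the transversality of its components.
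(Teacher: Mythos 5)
Your proposal captures the overall strategy---a Traizet-style node-opening regeneration in which balance equations appear as the leading order of a horizontal period problem and the implicit function theorem closes the argument---but two of your descriptions diverge from what the paper actually does, and one of them is a genuine gap.

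First, the noded surface. You describe $R_0$ as ``built from $n$ Riemann spheres, one per point $p_j$.'' The paper's degenerate surface is instead two copies of $\hat{\C}$ joined by $n$ nodes, one node per $p_j$ (located at $p_j$ in one sphere and $\overline{p_j}$ in the other), which has arithmetic genus $n-1$. The helicoidal data do not live on component spheres of the noded curve; they appear only after rescaling the $n$ annular necks, while the two macroscopic spheres carry, in the limit, the forms $dh_0 = \sum_j -i\varepsilon_j\,dz/(z-p_j)$ and $\omega_0$. This is a substantively different combinatorial model, and it matters: the construction's central symmetry $\sigma$ (exchange of the two copies via conjugation), which is used to reduce the $B$-period problem to the $C$-period problem and to control the height differential, is defined on the two-sphere model and has no obvious analogue on $n$ spheres. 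Your regeneration would have to rebuild that reduction from scratch.

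Second, and more serious, the embeddedness claim. You write that ``embeddedness for small $t$ follows from embeddedness of the limit parking garage together with transversality of its components.'' But the limit $\mathcal{M}_0$ described in item (3) of the theorem is \emph{not} embedded: the two multigraphs $f$ and $f+\pi$ meet along the vertical lines through each $p_j$. So there is no embedded limit to perturb, and a nearby minimal surface near a self-intersecting limit has no a priori reason to be embedded. The paper devotes an entire section to this, decomposing $M_t$ into three overlapping regions (the two macroscopic multigraph regions, the helicoidal cores $|z-p_j|\leq cr_j$ after rescaling by $\zeta=(z-p_j)/r_j$, and the transition annuli $cr_j<|z-p_j|<\epsilon$) and proving embeddedness separately in each, with the transition annuli requiring a maximum-principle argument about horizontal slices and the regularity of $(X_1, X_2)$ where $|G|>1$. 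You need that analysis; it does not come for free.

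A smaller inaccuracy: the graphical sheets are controlled by the height $\re\int dh$, which limits to $f(z) = \sum_j \varepsilon_j\arg(z-p_j)$, not by $\arg G_t$. Also, you did not mention that the period problem is solved in two stages---first the $B$-periods of $\omega$ (reduced modulo $2\pi i$) are used with the implicit function theorem to fix $r_2,\dots,r_n$ as functions of $r_1$, and only then is the horizontal $C$-period problem attacked to fix the $p_j$. These stages are coupled through the normalization $\lim_{\tau\to 0}\Lambda^2 t = 4/|c_0|^2$, which is what makes the limiting $C$-periods come out as $F_1-F_j$. Without spelling that out, the statement that ``dividing by an appropriate power of $t$'' gives $F_j=0$ is an assertion, not an argument.
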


\begin{remark}
The value of $N$ determines the class of $M_t$. Indeed, if $N >0$, the helicoidal ends ascend counter-clockwise, and the winding number increases in $t$, which makes the surface helicoid-type.  We say that the surface twists further as $t$ increases.  On the other hand, if $N < 0$, then the helicoidal ends ascend clockwise, and we say that it untwist as $t$ increases.  These are Scherk-type, and if they exist up to $t = 1/N$, their ends become vertical Scherk with winding number 0.
\end{remark}

We will prove this result over the next five sections.  In sections 3-5, we construct a Riemann surface and Weierstrass data corresponding to $M_t/S_t$. Then in sections 6 and 7, we solve the major period problem and prove embeddedness.  Finally we consider some new surfaces corresponding to solutions to the balance equations, focusing on symmetric examples in section 8.

We would like to thank Matthias Weber for his invaluable advice and discussions.  We would also like to acknowledge Ramazan Yol and Hao Chen for their helpful contributions.

\def\fw{3in}
\begin{figure}[H]
 \begin{center}
   {\includegraphics[width=\fw]{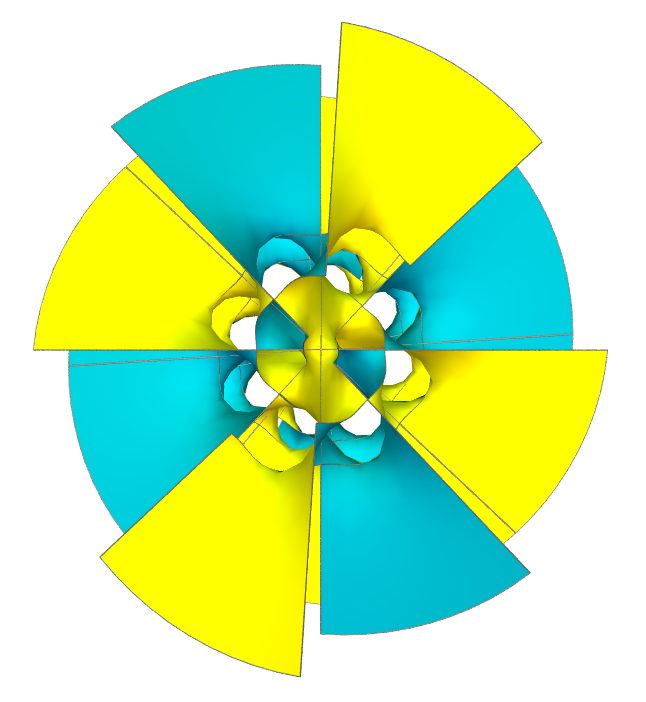}}
 \end{center}
 \caption{A genus-4 Scherk class surface numerically untwisted}
 \label{fig:HGKS}
\end{figure}

\section{The Riemann surface}

\subsection{Construction and symmetry}

We define a Riemann surface $\Sigma = M_t/S_t$ underlying the quotient of our SMIMS by the screw motion.  To do this, we join two copies of $\hat{\C}$ by helicoidal necks. Since the quotient of a helicoid by translation is conformally $\C - \{0\}$, a helicoidal neck will be conformally an annulus. 

Consider a configuration $p_1, ... ,p_n \in \C_1$, with corresponding charges $\varepsilon_j$.  For each $p_j$, consider the coordinates 

\begin{align*}
v_j (z) &= z-p_j\textmd{ in a neighborhood of } p_j\in \C_1,\\
w_j (z) &= z-\overline{p_j}\textmd{ in a neighborhood of } \overline{p_j}\in \C_2.\\
\end{align*}

We use these coordinates to define annuli which correspond to portions of helicoids which we glue together.  

Let $\epsilon$ be a constant small enough so all disks $\{|v_j|<\epsilon\}$ are disjoint.  For each point, we remove the neighborhoods $\left\{|v_j|< \frac{r_j^2}\epsilon\right\}$ and $\left\{|w_j|< \frac{r_j^2}\epsilon\right\}$, where each $r_j$ is a parameter close to 0.  We identify the annuli 

\begin{align*}
\left\{\frac{r_j^2}\epsilon < |v_j| < \epsilon\right\}, \qquad \left\{\frac{r_j^2}\epsilon < |w_j| < \epsilon
\right\}\end{align*}
by
\begin{align*}
v_j w_j = -r_j^2.
\end{align*}

The negative sign comes from the way we identify portions of helicoids.  We want the ``upper part" of the helicoid in one copy of $\C$ to be identified with the ``lower part" of the other.  Note that the helicoidal axes will be the ``central" rings of the annuli, corresponding to $\{|v_j|= |w_j| = r_j\}$.

\def\fw{2.2in}
\def\fww{3.4in}
\begin{figure}[H]\label{AB}
 
   \subfigure[On $
 \Sigma$]{\includegraphics[width=\fw]{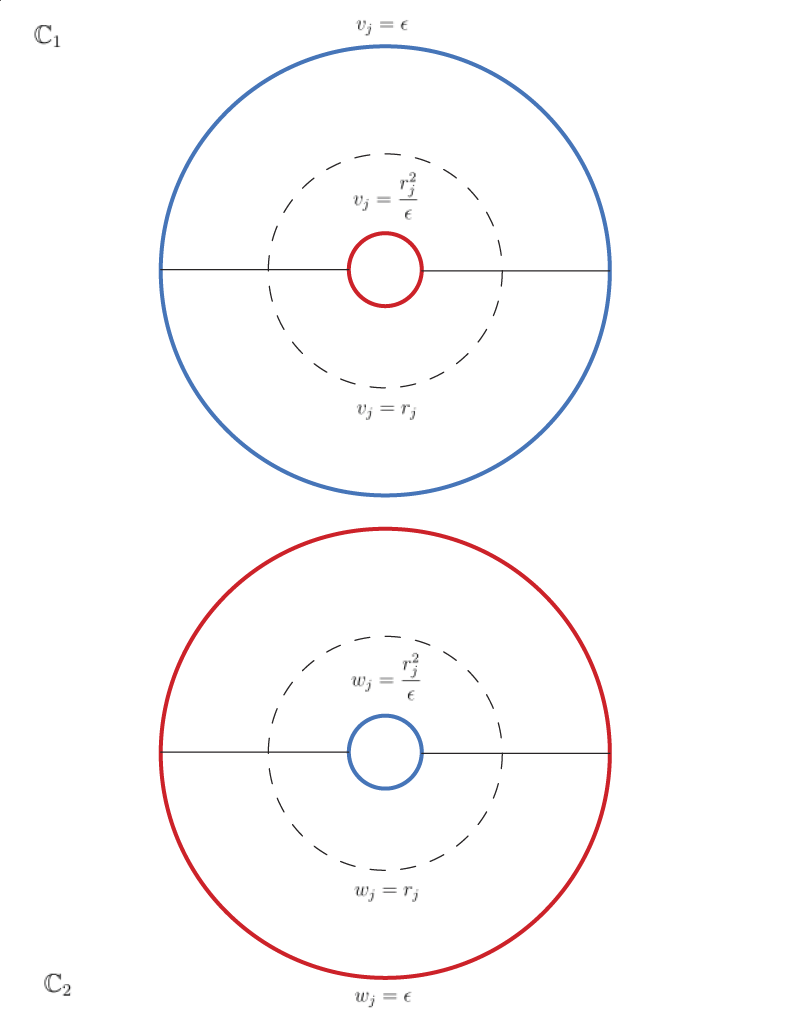}}
   \subfigure[Helicoidal images in $M_t$]{\includegraphics[width=\fww]{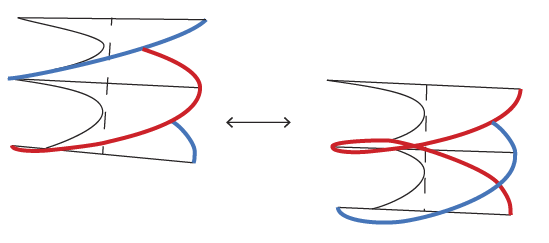}}
 \caption{The annular identification}
 \label{fig:Id}
\end{figure}

This gives us a compact Riemann surface $\Sigma$ of genus $n-1$.  Note that since an annulus is conformally determined by its modulus, $\Sigma$ depends on each $r_j$.  As all $r_j$ converge to 0, $\Sigma$ becomes a noded surface consisting of two spheres joined by $n$ nodes.

The surface $\Sigma$ has a natural symmetry, which we will exploit:

\begin{lemma}
\label{sigma}
Let $\sigma$ send a point in one copy of $\hat\C$ to its conjugate in the other copy.  This defines a well-defined symmetry of $\Sigma$.
\end{lemma}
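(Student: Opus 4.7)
The plan is to verify that $\sigma$, defined on each sheet by $z \mapsto \overline{z}$ with output on the opposite sheet, descends to a well-defined antiholomorphic involution of $\Sigma$. The involution property $\sigma^2 = \mathrm{id}$ is automatic, since sheet-swap and complex conjugation each square to the identity and commute; the only real content of the lemma is compatibility with the $n$ annular identifications used to build $\Sigma$.

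First I would read off how $\sigma$ acts in the chart pairs $(v_j, w_j)$. If $z \in \C_1$ lies near $p_j$, then its image $\sigma(z) = \overline{z} \in \C_2$ lies near $\overline{p_j}$, and
\begin{align*}
w_j(\sigma(z)) = \overline{z} - \overline{p_j} = \overline{z - p_j} = \overline{v_j(z)}.
\end{align*}
Symmetrically, for $z \in \C_2$ near $\overline{p_j}$ one gets $v_j(\sigma(z)) = \overline{w_j(z)}$. In particular, since $|\overline{v_j}| = |v_j|$, the removed disks $\{|v_j|<r_j^2/\epsilon\}$ and $\{|w_j|<r_j^2/\epsilon\}$ are exchanged by $\sigma$, as are the overlap annuli, so $\sigma$ at least maps the glued domains to themselves.

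With this in hand, the well-definedness check on the identifications reduces to a single line. Two points are identified in $\Sigma$ iff their coordinates satisfy $v_j w_j = -r_j^2$; after applying $\sigma$, the relation to be verified for the images is $\overline{v_j}\cdot \overline{w_j} = -r_j^2$, which is simply the complex conjugate of the original. Since $r_j \in \R$, the constant $-r_j^2$ is real, so this holds automatically. Hence $\sigma$ carries identified pairs to identified pairs and descends to $\Sigma$; antiholomorphicity is then clear from the definition.

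The only point where the argument could fail is the reality of the gluing constant $-r_j^2$: if the moduli $r_j$ were complex, the sheet-swapping conjugation would need a phase correction to be a symmetry. The construction's use of real $r_j$ is precisely what allows $\sigma$ to descend, so no further input is required.
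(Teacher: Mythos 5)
Your proof is correct and takes essentially the same approach as the paper: you compute $w_j(\sigma(z)) = \overline{v_j(z)}$ (and the symmetric identity), observe that $\sigma$ exchanges the two annular neighborhoods, and deduce compatibility with the gluing $v_j w_j = -r_j^2$ by conjugating and using that $-r_j^2$ is real. The only cosmetic difference is that the paper phrases the compatibility check explicitly in terms of a pair of points $z, z'$ satisfying $v_j(z) w_j(z') = -r_j^2$, whereas you compress this into a single line; the substance is identical.
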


\begin{proof}
Note that $\sigma$ will send the annulus around each $p_j$ in $\C_1$ to the corresponding annulus around $\overline{p_j}$ in $\C_2$.  We need to check that this map is compatible with the identifications at each neck, that is, if 

\begin{align*}
v_j(z)w_j(z') = -r_j^2
\end{align*}

then
\begin{align*}
w_j(\sigma(z))v_j(\sigma(z')) = -r_j^2.
\end{align*}

To this end, observe, for $\frac{r_j^2}\epsilon < |v_j(z)| < \epsilon$:

\begin{align*}
w_j(\sigma(z)) = \sigma(z)-\overline{p_j} = \overline{z-p_j} = \overline{v_j(z)},
\end{align*}
and hence $\frac{r_j^2}\epsilon <  |w_j(\sigma(z))| < \epsilon$.  The same argument shows $v_j(\sigma(z')) = \overline{w_j(z')}$ for $\frac{r_j^2}\epsilon <  |w_j(z')| < \epsilon$.

We thus have,

\begin{align*}
w_j(\sigma(z))v_j(\sigma(z')) = \overline{v_j(z)w_j(z')} =  -r_j^2.
\end{align*}

\end{proof}

We will see that this symmetry corresponds to an orientation-reversing screw motion which satisfies $\sigma^2 = S_t.$  It is however clear that $\sigma^2$ is the identity on the quotient.

\subsection{Homology basis}\label{Hom}

Let us now describe a homology basis for $\Sigma$.  Consider, for $j = 1, ... n$, the circles $A_j$ defined by \{$v_j = \epsilon e^{i\varepsilon_j s}, s\in [0,2\pi]\}$.  Note that these are not all oriented the same way; this choice is to guarantee that going around an $A$ curve always goes ``up a helicoid," as will be clear in the definition of $dh$.  For $j = 2, ..., n$ we define the curves $B_j$ as follows.

\def\fw{3.5in}
\begin{figure}[H]\label{B}

  \includegraphics[width=\fw]{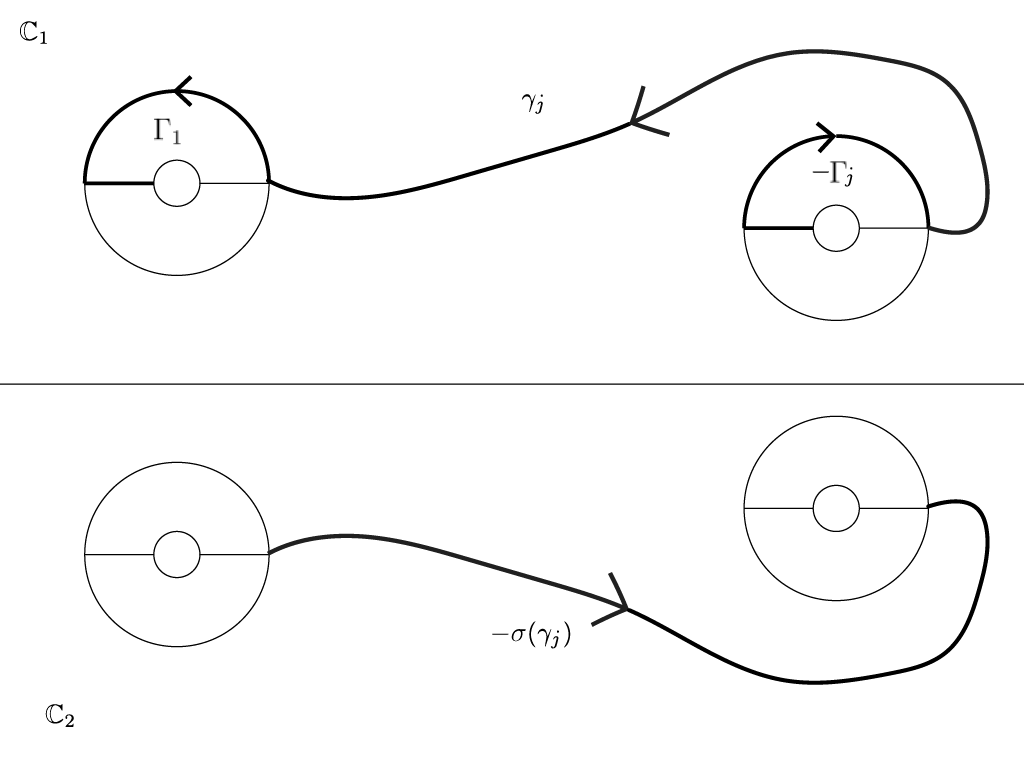}

 \caption{$B_j$ as a composition of $\gamma_j$ and $\Gamma_j$ paths}
 \label{fig:FK3}
\end{figure}

Consider the region 
\begin{align*}
\Omega = \C_1 - \cup_j \{|z - p_j| < \epsilon\}.
\end{align*}
For $j=1, ..., n$, consider the paths

\begin{enumerate}
    \item  $\gamma_j$ in $\Omega$ going from $v_1 = \epsilon$ to $v_j = \epsilon$ without winding around any necks.  To be precise, we cut the region $\Omega$ into a simply-connected domain with paths connecting a basepoint $z_0$ to each point $v_j = -\epsilon$ and require that each $\gamma_j$ lies in this simply-connected domain.  This ensures that these paths lie on only one fundamental piece of $M_t$. 
    \item $\Gamma_j$, that consists of the half-circle $v_j = \epsilon e^{\pi i\varepsilon_j s}$, $s\in [0, 1]$, followed by the radial segment $v_j = s,\ s\in [ -\epsilon, -\frac{r_j^2}{\epsilon}]$ joining $v_j = \epsilon$ and $w_j = \epsilon$ $(v_j = \frac{-r_j^2} \epsilon)$.  This goes ``half-way up" one of the necks, that is, from one copy of $\C$ to another.
\end{enumerate}

For $j = 2, ..., n$, the curves $B_j$ are defined as $ \Gamma_1 * \sigma(\gamma_j) * \Gamma_j^{-1} * \gamma_j^{-1} $.  Since $\{v_j(z) = \epsilon\} = \{w_j(\sigma(z)) = \epsilon\}$, these are closed curves.

The $A_j$ and $B_j$ curves, for $j=2, ..., n$, form a homology basis for $\Sigma$.  In fact, it is almost a canonical homology basis since $(A_j \cdot B_k) = \varepsilon_j\delta_j^k$ for $j, k \geq 2$. 

\def\fw{2.8in}
\begin{figure}[H]\label{AB}
 
   \subfigure[On $
 \Sigma$]{\includegraphics[width=\fw]{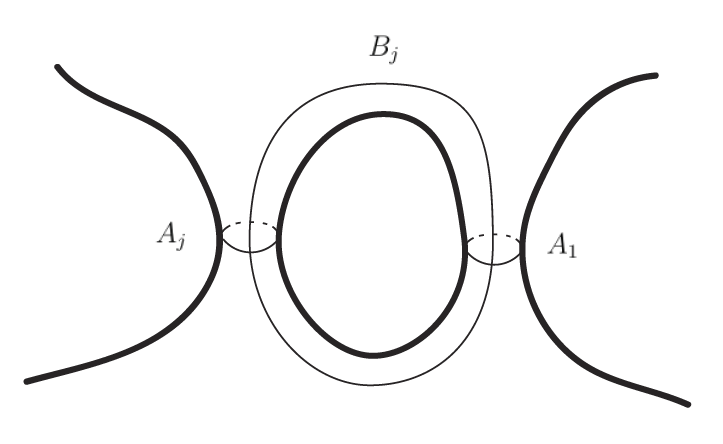}}
   \subfigure[Images in $M_t$]{\includegraphics[width=\fw]{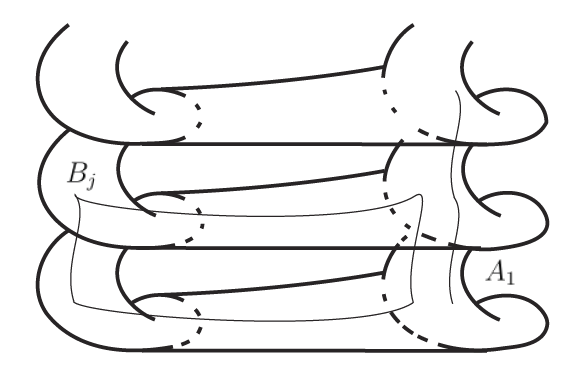}}
 \caption{Visualizing the homology classes in the quotient and on the surface itself}
 \label{fig:FK3}
\end{figure}

\section{The height differential}

We now define the height differential based on its expected properties.  The symmetry of the surface will imply that its periods are closed.

If $N \neq 0$, we expect $M_t$ to have helicoidal ends, so $dh$ should have simple poles at infinity with imaginary residues.  In the $N=0$ case, we expect horizontal planar ends, and hence $dh$ cannot have a pole at infinity.  These conditions lead to the following

\begin{definition}
We define dh on $\Sigma$ as follows:

\begin{itemize}
    \item if $N \neq 0$, then  $dh$ is a meromorphic 1-form with simple poles at $\infty_1$ and $\infty_2$.
    \item if $N = 0$, $dh$ is a  holomorphic 1-form.

\end{itemize}
In both cases, $dh$ is uniquely  normalized by the period conditions

\begin{align*}
 \int_{A_j}dh = 2\pi \qquad \forall\ j = 1, ..., n.
\end{align*}
\end{definition}

The normalization is unique since the periods are fixed for all $j$ and there is at most one simple pole in each copy of $\C$, whose residue is determined by the residue theorem.  When $N \neq 0$, we have: 

\begin{align*}
\res_{\infty_1}dh = -\res_{\infty_2}dh = - \frac1{2\pi i} \sum_{j=1}^n2\pi \varepsilon_j  = Ni.
\end{align*}

Note that when $N = 0$, $dh$ could have zeros at infinity, as is the case for dihedrally symmetric configurations.  

We now show that $dh$ has the desired symmetry and closed periods.

\begin{lemma}\label{dh}
The height differential has the following symmetry: $\sigma^*dh = \overline{dh}$.
\end{lemma}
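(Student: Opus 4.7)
The plan is to show that $\omega := \overline{\sigma^* dh}$ satisfies the same defining properties as $dh$ and then invoke the uniqueness of the normalized height differential.

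First I would verify the meromorphic structure. Since $\sigma$ is an anti-holomorphic involution, $\sigma^* dh$ is anti-meromorphic and $\omega$ is meromorphic. Because $\sigma$ swaps $\C_1$ and $\C_2$ (in particular $\sigma(\infty_1)=\infty_2$), $\omega$ has poles only where $dh$ does: simple poles at $\infty_1, \infty_2$ if $N\neq 0$, and no poles at all if $N=0$.

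Next I would check the $A$-period normalization. Standard manipulations give
\[
\int_{A_j}\omega \;=\; \overline{\int_{A_j}\sigma^* dh} \;=\; \overline{\int_{\sigma_*(A_j)}dh},
\]
so the task reduces to showing $\sigma_*(A_j)\sim A_j$ in $H_1(\Sigma)$. The cycle $A_j$ is parametrized in $\C_1$ by $v_j = \epsilon e^{i\varepsilon_j s}$, $s\in[0,2\pi]$. Using the identity $w_j\circ\sigma = \overline{v_j}$ established in the proof of Lemma \ref{sigma}, the pushforward $\sigma_*(A_j)$ is the cycle $w_j = \epsilon e^{-i\varepsilon_j s}$ in $\C_2$, i.e., the circle $|w_j|=\epsilon$ traversed in direction $-\varepsilon_j$. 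On the other hand, through the neck identification $v_j w_j = -r_j^2$, the cycle $|v_j|=\epsilon$ in direction $\varepsilon_j$ becomes $|w_j|=r_j^2/\epsilon$ in direction $-\varepsilon_j$; this is the image of $A_j$ on the $\C_2$ side of the neck, and deforming it inside $\C_2$ out to $|w_j|=\epsilon$ yields exactly $\sigma_*(A_j)$. Hence $\sigma_*(A_j)\sim A_j$ and $\int_{A_j}\omega = \overline{2\pi}=2\pi$ for every $j$.

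By the uniqueness asserted just after the definition of $dh$, $\omega = dh$, which is the desired identity $\sigma^* dh = \overline{dh}$. The main obstacle is the homological identification $\sigma_*(A_j)\sim A_j$: two orientation-reversing effects enter simultaneously, one from the anti-holomorphic nature of $\sigma$ acting on the tangent to $A_j$, and one from passing between $v_j$ and $w_j$ coordinates through the neck. Careful bookkeeping of the parametrizations as above shows these effects combine to preserve the class of $A_j$, rather than negating it.
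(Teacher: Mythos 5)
Your proposal is correct and takes essentially the same route as the paper: both hinge on the homological identity $\sigma(A_j)\sim A_j$ (yours traced in $w_j$-coordinates, the paper's in $v_j$-coordinates through the neck gluing), followed by equality of $A$-periods and the uniqueness of the normalized $dh$. The only cosmetic difference is that you package the argument by applying uniqueness to the candidate form $\overline{\sigma^*dh}$, whereas the paper compares $\sigma^*dh$ and $\overline{dh}$ directly; the content is identical.
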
 
\begin{proof}
To see this, note that $\sigma(A_j)$ is described by 

\begin{align*}
w_j = \overline{\epsilon e^{i\varepsilon_j s}}, \qquad \textmd{i.e.} \qquad v_j = \frac{-r_j^2}{\epsilon \overline{e^{i\varepsilon_j s}} } = \frac{-r_j^2}{\epsilon} e^{i\varepsilon_j s},
\end{align*}
for $s\in [0,2\pi]$, and is homologous to $A_j$.  Thus, we have:

\begin{align*}
\int_{A_j} \sigma^*dh = \int_{\sigma(A_j)}dh = \int_{A_j}dh = \int_{A_j}\overline{dh}.
\end{align*}

Since $\res_{\infty_1}\sigma^*dh = \res_{\infty_2}dh = \res_{\infty_1}\overline{dh}$, we conclude these two forms have the same poles, residues, and periods and are hence equal.  
\end{proof}

We now consider the period condition for the height differential:

\begin{lemma}
\label{dhper}
The $B$-periods of $dh$ are pure imaginary.
\end{lemma}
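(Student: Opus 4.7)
The plan is to exploit the symmetry $\sigma^* dh = \overline{dh}$ from Lemma~\ref{dh} by splitting each $B_j$ into its four sub-paths and pairing the resulting integrals via $\sigma$. Using the defining decomposition $B_j = \Gamma_1 \ast \sigma(\gamma_j) \ast \Gamma_j^{-1} \ast \gamma_j^{-1}$, I would first write
\begin{align*}
\int_{B_j} dh = \int_{\Gamma_1} dh - \int_{\Gamma_j} dh + \int_{\sigma(\gamma_j)} dh - \int_{\gamma_j} dh.
\end{align*}
Pulling back through $\sigma$ gives $\int_{\sigma(\gamma_j)} dh = \int_{\gamma_j} \sigma^* dh = \int_{\gamma_j}\overline{dh} = \overline{\int_{\gamma_j} dh}$, so the two $\gamma_j$-terms combine to $-2i\,\im \int_{\gamma_j} dh$, which is manifestly purely imaginary. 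The problem therefore reduces to showing that $\re \int_{\Gamma_j} dh$ is independent of $j$; I will in fact show it equals $\pi$ for every $j$.

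For this, I would consider the loop $L_j := \Gamma_j \ast \sigma(\Gamma_j)$ based at $v_j = \epsilon$, which is entirely contained in the annular neck over $p_j$. Tracking $\arg v_j$ continuously around $L_j$---the outer half-arc of $\Gamma_j$ contributes $\pi\varepsilon_j$, the radial segment contributes nothing, then $\sigma(\Gamma_j)$ rewritten in the $v_j$-chart via $v_j w_j = -r_j^2$ traces an inner half-arc that contributes another $\pi\varepsilon_j$, and the final radial segment contributes nothing---one obtains total winding number $\varepsilon_j$ of $L_j$ around $p_j$. Hence $L_j$ is homologous to $A_j$ in the annulus, and thus in $\Sigma$. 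Combining with $\int_{\sigma(\Gamma_j)} dh = \overline{\int_{\Gamma_j} dh}$ and the normalization $\int_{A_j} dh = 2\pi$ then gives
\begin{align*}
2\,\re \int_{\Gamma_j} dh = \int_{\Gamma_j} dh + \overline{\int_{\Gamma_j} dh} = \int_{L_j} dh = \int_{A_j} dh = 2\pi,
\end{align*}
so $\re \int_{\Gamma_j} dh = \pi$ as claimed.

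Substituting back, $\re \int_{B_j} dh = \pi - \pi + 0 = 0$, establishing that the $B$-periods are purely imaginary. The only genuinely delicate step is the homology identification $L_j \sim A_j$: one must verify that the negative sign in the neck identification $v_j w_j = -r_j^2$, together with the $\varepsilon_j$-dependent orientation in the definition of $A_j$, conspire to make the two half-arcs on opposite sides of the central ring assemble into a single loop of winding number exactly $\varepsilon_j$. Once this bookkeeping is in hand, the rest of the argument is an immediate consequence of the symmetry $\sigma^* dh = \overline{dh}$.
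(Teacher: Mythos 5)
Your proposal is correct and follows essentially the same route as the paper: the same decomposition of $B_j$, the same observation that the $\gamma_j$-terms combine to something purely imaginary via $\sigma^* dh = \overline{dh}$, and the same identification $\Gamma_j \ast \sigma(\Gamma_j) \sim A_j$ to conclude $\re\int_{\Gamma_j} dh = \pi$ for every $j$. The only difference is that you spell out the winding-number bookkeeping for $L_j \sim A_j$ in more detail than the paper, which simply asserts that this loop winds once around the neck.
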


\begin{proof}

By the definition of $B_j$, we have:
\begin{align*}
\int_{B_j} dh = \int_{\Gamma_1}dh + \int_{\sigma(\gamma_j)}dh - \int_{\Gamma_j}dh - \int_{\gamma_j}dh.
\end{align*}

Now, the sum, 

\begin{align*}
\int_{\sigma(\gamma_j)}dh - \int_{\gamma_j}dh = \int_{\gamma_j}\sigma^*dh - \int_{\gamma_j}dh = \int_{\gamma_j}(\overline{dh} - dh )
\end{align*}
is imaginary.  Moreover, $\Gamma_j + \sigma(\Gamma_j) $ is a closed loop winding once around the $p_j$ neck and is hence homologous to $A_j$.  Thus, 

\begin{align*}
Re\int_{\Gamma_j}dh = \frac12 \left( \int_{\Gamma_j}dh + \int_{\sigma(\Gamma_j)}dh \right) = \frac 12\int_{A_j}dh = \pi.
\end{align*}

Thus, 

\begin{align*}
Re\left(\int_{\Gamma_1}dh - \int_{\Gamma_j}dh\right) = \pi - \pi = 0.
\end{align*}

\end{proof}

\def\fw{3.5in}
\begin{figure}[H]
 \begin{center}
  \includegraphics[width=\fw]{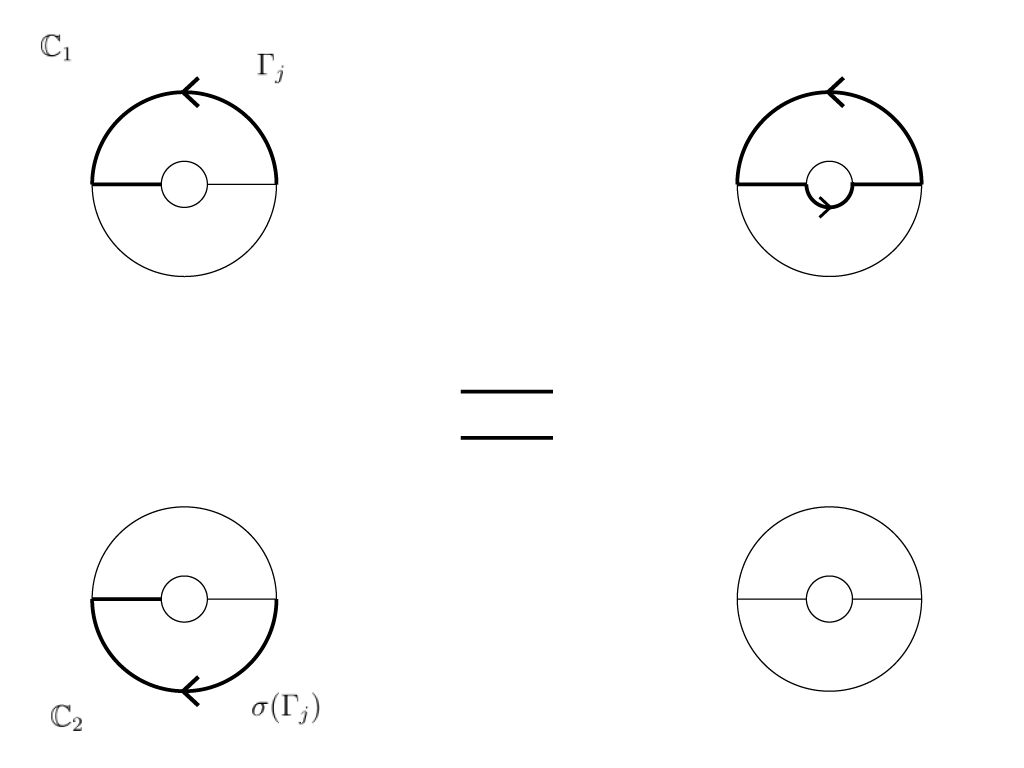}
 \end{center}
 \caption{$\Gamma_j + \sigma(\Gamma_j) = A_j$}
 \label{fig:Gamma}
\end{figure}

Since we will be looking at the noded limit, we consider the limit of $dh$ as all annuli shrink to nodes.  A similar argument to Traizet's in \cite{tr4} proves the following

\begin{lemma}
\label{dhlim}
When all $r_j$ converge to 0, $dh$ converges on compact subsets of $
\Omega - \{p_i\}$ to the meromorphic 1-form $dh_0$ defined by 

\begin{align*}
dh_0 = \sum_{j=1}^n \frac{-i\varepsilon_j dz}{z-p_j}.
\end{align*}

\end{lemma}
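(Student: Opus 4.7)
The plan is to prove convergence via a normal families argument, then identify the limit using the prescribed $A_j$-periods and the residues at infinity, exploiting that meromorphic 1-forms on $\hat{\C}$ are determined by their principal parts.

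First I would establish uniform boundedness of $dh$ on any compact $K \subset \Omega$ as the $r_j \to 0$. For all $r_j$ sufficiently small, $K$ lies entirely in the complement of the necks, so $dh$ is holomorphic there. Following Traizet's framework in \cite{tr4}, one constructs an explicit reference 1-form, a matched sum of $dz/(z - p_j)$-type terms on $\C_1$ and $\C_2$ modified near each neck so as to be well-defined on $\Sigma$, having the prescribed $A_j$-periods and the correct residues at $\infty_1, \infty_2$ up to errors vanishing with $r_j$. The difference between $dh$ and this reference is then a holomorphic form on the degenerating Riemann surface with small periods, hence uniformly bounded on $K$ by the standard bounds on Abelian differentials in a plumbing family.

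By Montel's theorem, any sequence $r_j^{(k)} \to 0$ admits a subsequence along which $dh$ converges uniformly on compact subsets of $\C_1 - \{p_1, \ldots, p_n\}$ to a holomorphic 1-form $\eta$, and similarly on $\C_2$ (related by the symmetry $\sigma^* dh = \overline{dh}$ from Lemma \ref{dh}). The condition $\int_{A_j} dh = 2\pi$ is preserved in the limit since $A_j = \{v_j = \epsilon e^{i \varepsilon_j s}\}$ is a fixed loop contained in $\Omega$ for all small $r_j$. A direct computation using this parametrization, for $\eta = c_j\, dv_j / v_j + (\text{holomorphic})$ near $p_j$, yields $2\pi = 2\pi i \varepsilon_j c_j$ and hence $c_j = -i\varepsilon_j$. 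At $\infty_1$: when $N \neq 0$, the simple pole of $dh$ with residue $Ni$ persists in $\eta$; when $N = 0$, $\eta$ is holomorphic there.

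Thus $\eta$ is a meromorphic 1-form on $\hat{\C}$ whose principal parts at $p_1, \ldots, p_n$ and $\infty$ coincide with those of $\sum_{j=1}^n \frac{-i\varepsilon_j\, dz}{z - p_j}$. Since the space of holomorphic 1-forms on $\hat{\C}$ is trivial, the principal parts determine the form, so $\eta = dh_0$. Because the limit is unique and independent of the subsequence, the full family converges. The main obstacle is the uniform-boundedness step, which demands careful control of how Abelian differentials behave across pinching necks; the precise estimates are exactly those developed by Traizet in \cite{tr4} for the plumbing construction, and the present setting fits that framework directly once one verifies the matching of residues on the two copies of $\hat{\C}$.
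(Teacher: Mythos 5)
Your overall strategy is sound and is in the same spirit as the one-line citation the paper gives for this lemma (the paper offers no detailed proof of its own; it simply defers to Traizet \cite{tr4} for the plumbing estimates), so there is no explicit paper-argument to compare yours against line by line. That said, there is a gap in the way you identify the limit that is worth flagging. After extracting a subsequential Montel limit $\eta$, you write $\eta = c_j\,dv_j/v_j + (\text{holomorphic})$ near $p_j$ and then read off $c_j$ from the $A_j$-period. But nothing in the Montel step yet controls the singularity type of $\eta$ at $p_j$: the locally uniform limit on compact subsets of $\C_1 - \{p_1,\dots,p_n\}$ could a priori have a higher-order pole or an essential singularity there, and the $A_j$-period is blind to this, since $\int_{A_j} dv_j/v_j^m = 0$ for every $m \geq 2$. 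To exclude the higher-order terms you need the uniform bound to extend \emph{up to} the $p_j$ (e.g. $|dh| = O(1/|z - p_j|)$ uniformly in the $r_j$ on a fixed punctured neighborhood), which is precisely the content of Traizet's neck estimates; compact-exhaustion boundedness away from the $p_j$ is not enough.

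The cleaner way to close this, and really what your own reference-form idea is driving at, is to run Montel not on $dh$ but on the difference $dh - \vartheta$, where $\vartheta$ is the reference 1-form on $\Sigma$. If $\vartheta$ is built so that $dh - \vartheta$ is genuinely holomorphic on $\Sigma$ with small $A$-periods (rather than "correct up to errors vanishing with $r_j$," which makes the difference merely close to holomorphic), then the difference is uniformly small on fixed annuli $\{\delta \le |v_j| \le \epsilon\}$ \emph{including} near $p_j$ and at $\infty_1$, its limit is a holomorphic 1-form on all of $\hat{\C}$, and therefore vanishes. This yields $\eta = \lim \vartheta = dh_0$ directly, with no need to separately rule out bad singularities. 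Your draft hovers between the two framings; either one works once stated carefully, and the heavy lifting in both is the same estimate you are already citing from \cite{tr4}.
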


\section{The Gauss map} 

The Gauss map on $\Sigma$ will be multivalued because of the screw motion.  So we construct it by first defining its well-defined logarithmic differential$\omega = \frac{dG}G$, then integrate and exponentiate.  In the process, we reduce the number of parameters.

\subsection{The logarithmic differential}

We orient the surface so that $G$ is infinite at $\infty_1$ and at the zeros of $dh$. So $\omega$ will have simple poles at the corresponding points, with the proper residues.  

Because the genus of $\Sigma$ is $n - 1$, we use the degree of its divisor $\deg(dh) = 2(n - 1) - 2 =2n-4$  to compute the number of zeros of $dh$ in both cases.  Note that in either case, the symmetry $\sigma^*dh = \overline{dh}$ tells us that for each zero in $\C_1$, its conjugate in $\C_2$ is also a zero of $dh$.

When $N \neq 0$, $dh$ has two simple poles, so we have $2n-4 = \deg(dh)_0 - 2$.  Hence $dh$ has $2n -2$ zeros, with $n - 1$ zeros $q_1, ..., q_{n-1}$ in  $\C_1$.

When $N = 0$,  since $dh$ is holomorphic, it will have $2n - 4$ zeros and hence $n-2$ zeros , $q_1, ..., q_{n-2}$ in $\hat\C_1$.  

\begin{definition}

We define the logarithmic differential $\omega$ of G as the unique meromorphic 1-form on $\Sigma$ with simple poles at each zero of $dh$ in $\C_1$ (resp. $\C_2)$ with residue -1 (resp. +1) and two simple poles at $\infty_1$, $\infty_2$, normalized by the periods,

\begin{align*}
 \int_{A_j}\omega = 2\pi i(\varepsilon_j + t) \qquad \forall\ j = 1, ..., n.
\end{align*}

If any of the above poles overlap, the residues of $\omega$ at those points are added.
\end{definition}

These periods incorporate the orientation of the helicoids (in the $\varepsilon_j$ term) and the screw motion rotation (recall that $S_t$ rotates the surface by $2\pi t$), which gives $G$ its multivaluedness. 

We can compute the residues of $\omega$ at infinity using the residue theorem in $\C_1$:
\begin{itemize}
    \item If $N \neq 0$, $dh$ has $n - 1$ zeros, and we have,
\begin{align*}
    \sum_{j=1}^n \varepsilon_j \int_{A_j} \omega = 2\pi i \left(\res_{\infty_1} \omega + \sum_{k=1}^{n-1} \res_{q_k}\omega \right),
\end{align*}
whence,
    
    \begin{align*}
    \res_{\infty_1}\omega = n - 1 - \sum_{j=1}^n (1 + t\varepsilon_j) = -1 - Nt = -\res_{\infty_2}\omega.
\end{align*}

\item If $N = 0$, $dh$ has $n - 2$ zeros, and the same calculation gives us,

\begin{align*}
\res_{\infty_1}\omega= -\res_{\infty_2}\omega = n - 2 - \sum_{j=1}^n (1 + t\varepsilon_j) = -2.
\end{align*}
\end{itemize}

The same idea as that of lemma \ref{dh} shows that $\omega$ is also symmetric with respect to $\sigma$:

\begin{lemma}
The logarithmic differential satisfies $\sigma^*\omega  = -\overline{\omega}$.
\end{lemma}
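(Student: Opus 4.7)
The plan is to mimic the proof of Lemma \ref{dh}: since $\omega$ is uniquely determined by its prescribed poles, residues, and $A$-periods, it suffices to show that $\sigma^*\omega$ and $-\overline{\omega}$ have matching principal parts and matching $A$-periods. (Equivalently, one can compare the holomorphic 1-forms $\omega$ and $-\overline{\sigma^*\omega}$; the content is identical.)

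First I would match the poles. The identity $\sigma^*dh = \overline{dh}$ implies that $\sigma$ permutes the zero set of $dh$, sending each zero $q_k \in \C_1$ to $\overline{q_k} \in \C_2$, and of course $\sigma$ exchanges $\infty_1$ with $\infty_2$. These are precisely the points at which $\omega$ is prescribed to have simple poles, so $\sigma^*\omega$ and $-\overline{\omega}$ have their poles on exactly the same set.

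Next I would compare residues. Pullback by the anti-holomorphic involution $\sigma$ conjugates residues, while the operation $\omega \mapsto -\overline{\omega}$ both conjugates and negates them. Since all the prescribed residues of $\omega$ are real — namely $\mp 1$ at the zeros of $dh$ in $\C_1$, $\C_2$, and $\mp(1+Nt)$ (or $\mp 2$ when $N = 0$) at $\infty_{1,2}$ — the required agreement at each pole reduces to a short arithmetic check. For instance, at $q_k \in \C_1$ one has $-\overline{\omega}$-residue $+1$, while $\sigma^*\omega$ inherits residue $\overline{+1} = +1$ from the residue $+1$ of $\omega$ at $\sigma(q_k) = \overline{q_k}$; the other three cases are analogous.

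Finally I would match $A$-periods. Exactly as in Lemma \ref{dh}, the cycle $\sigma(A_j)$ is homologous to $A_j$ (it is a circle around the same neck in the other copy of $\C$), so
\begin{align*}
\int_{A_j}\sigma^*\omega = \int_{\sigma(A_j)}\omega = 2\pi i(\varepsilon_j + t),
\end{align*}
whereas
\begin{align*}
\int_{A_j}\bigl(-\overline{\omega}\bigr) = -\overline{\int_{A_j}\omega} = -\overline{2\pi i(\varepsilon_j + t)} = 2\pi i(\varepsilon_j + t)
\end{align*}
because $\varepsilon_j$ and $t$ are real. The two sides agree, and uniqueness of $\omega$ then forces $\sigma^*\omega = -\overline{\omega}$. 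I do not anticipate a genuine obstacle; the only delicate point is bookkeeping the conjugation signs introduced by the anti-holomorphic pullback, and this is the same subtlety already resolved in the treatment of $dh$.
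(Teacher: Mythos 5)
Your proof is correct and follows the paper's intended route: the paper merely invokes ``the same idea as Lemma \ref{dh},'' and you supply exactly those details — matching the poles (now including those at the zeros of $dh$, which did not arise for the height differential), the residues, and the $A$-periods of $\sigma^*\omega$ and $-\overline{\omega}$, then appealing to uniqueness. The one small imprecision is the claim that anti-holomorphic pullback ``conjugates residues,'' which depends on a convention for the residue of an anti-holomorphic form; but as you already note, all the prescribed residues here are real, so the arithmetic check goes through regardless.
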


The argument from \cite{tr4} also proves the following

\begin{lemma}
\label{omegalim}
When all $r_j\to 0$, $\omega$ converges on compact subsets of $\C_1- \{p_j,q_k\}$ to the meromorphic 1-form $\omega_0$ on $\C_1$ given by:

\begin{align*}
\omega_0 = \sum_{j=1}^n \frac{(1+t\varepsilon_j)dz}{z-p_j} - \sum_k\frac{dz}{z-q_k},
\end{align*}
where $k$ is summed to $\deg (dh)_0$.

\end{lemma}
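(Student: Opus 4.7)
The plan is to mimic the Traizet-style argument already used for Lemma \ref{dhlim}: exhibit $\omega$ as a normal family on compact subsets away from the nodes and zeros, extract a subsequential meromorphic limit $\tilde{\omega}_0$ on $\hat{\C}_1$, identify its principal parts from the defining data, and invoke the absence of nonzero holomorphic 1-forms on $\hat{\C}$ to conclude uniqueness.

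Concretely, I would fix a compact set $K \subset \C_1 \setminus \{p_1,\dots,p_n,q_1,\dots,q_{\deg(dh)_0}\}$. For all sufficiently small $r = (r_1,\dots,r_n)$, $\omega$ is holomorphic on $K$, and the points $q_k(r)$ (which depend continuously on $r$ through the zeros of $dh$) stay outside $K$ and converge to the zeros of $dh_0$ in $\hat{\C}_1$. The essential analytic step is to show that $\omega$ is uniformly bounded on $K$ as $r \to 0$. This uses exactly the same plumbing/normal-family estimates Traizet employs for $dh$: the principal parts of $\omega$ at each $q_k(r)$ are fixed ($-1$) and the $A_j$-periods are the fixed constants $2\pi i(\varepsilon_j + t)$, independent of $r$, which is enough to control $\omega$ uniformly away from the shrinking necks. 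Extracting a subsequence then yields a meromorphic 1-form $\tilde{\omega}_0$ on $\hat{\C}_1$.

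I then identify the principal parts of $\tilde{\omega}_0$. At each $q_k$ the residue is $-1$ by construction and passes to the limit. For each $p_j$, the cycle $A_j$ is homologous on $\Sigma$ to the loop $\{v_j = \epsilon e^{i\varepsilon_j s}\}$, which is a small loop in $\C_1$ around $p_j$ traversed with orientation $\varepsilon_j$; hence
\begin{align*}
2\pi i(\varepsilon_j + t) = \int_{A_j}\omega \longrightarrow 2\pi i\,\varepsilon_j\,\res_{p_j}\tilde{\omega}_0,
\end{align*}
which, using $\varepsilon_j = \pm 1$, gives $\res_{p_j}\tilde{\omega}_0 = (\varepsilon_j + t)/\varepsilon_j = 1 + t\varepsilon_j$. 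The residue at $\infty_1$ is then forced by the residue theorem on $\hat{\C}_1$ and matches the value computed in the paper. Since the only poles of $\omega$ on the $\C_1$-side are at the $q_k$ and $\infty_1$, $\tilde{\omega}_0$ has no other poles, so by the nonexistence of nonzero holomorphic 1-forms on $\hat{\C}$, $\tilde{\omega}_0$ is uniquely determined by these principal parts and equals the claimed $\omega_0$. Independence of subsequence upgrades this to convergence of the full family.

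The main obstacle is the uniform boundedness of $\omega$ on $K$ as $r \to 0$; controlling a meromorphic form near pinching necks is precisely where Traizet's explicit analysis of nearly-noded Riemann surfaces is essential. Once that estimate is imported, the residue bookkeeping and the final identification via uniqueness on $\hat{\C}$ are routine.
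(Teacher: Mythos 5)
Your argument is correct and is precisely the Traizet plumbing argument the paper itself invokes (the paper gives no explicit proof of this lemma, only the citation to \cite{tr4}). The normal-family extraction, the residue bookkeeping from the fixed $A_j$-periods (using the orientation $\varepsilon_j$ of $A_j$ to get $\res_{p_j}\tilde{\omega}_0 = (\varepsilon_j + t)/\varepsilon_j = 1 + t\varepsilon_j$) and the $-1$ residues at the $q_k$, and the uniqueness of a meromorphic $1$-form on $\hat{\C}$ with prescribed principal parts are exactly the intended steps.
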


We wish for the Gauss map to have no multivaluation along the $B$
cycles, so we want the periods of $\omega$ to be integral mutiples of $2\pi i$.

\begin{proposition}
\label{omegaper}
For $r_1$ small enough, there exist unique $r_2, ... r_n$, depending continuously on $r_1$ and on $\{p_j\}$ and $t$, such that,  

\begin{align*}
\int_{B_j}\omega = 0\mod 2\pi i.
\end{align*}
\end{proposition}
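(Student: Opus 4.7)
The plan is to solve $\int_{B_j}\omega \in 2\pi i\Z$ by first using the $\sigma$-symmetry to reduce it to a single real equation per $j$, then solving that equation by an implicit function theorem (IFT) argument near the noded limit, in the spirit of Traizet \cite{tr4}.

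First, I would verify that the imaginary part of $\int_{B_j}\omega$ is automatically a multiple of $2\pi$. The relation $\Gamma_j + \sigma(\Gamma_j) \sim A_j$ used in Lemma \ref{dhper} extends to give $\sigma(B_j) \sim A_1 - A_j - B_j$ as 1-chains on $\Sigma$. Combined with $\sigma^*\omega = -\overline{\omega}$ and $\int_{A_j}\omega = 2\pi i(\varepsilon_j+t)$, this yields
\begin{align*}
-\overline{\int_{B_j}\omega} \;=\; \int_{\sigma(B_j)}\omega \;=\; 2\pi i(\varepsilon_1 - \varepsilon_j) - \int_{B_j}\omega,
\end{align*}
so $\im\int_{B_j}\omega = \pi(\varepsilon_1 - \varepsilon_j) \in \{-2\pi, 0, 2\pi\}$. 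Thus $\int_{B_j}\omega \in 2\pi i\Z$ is equivalent to the single real equation $\re\int_{B_j}\omega = 0$.

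Second, I would expand $\re\int_{B_j}\omega$ near the noded limit. In the annulus $\{r_j^2/\epsilon < |v_j|<\epsilon\}$, the Laurent expansion of $\omega$ has $dv_j/v_j$ coefficient $(1+t\varepsilon_j)$, forced by the normalization of the $A_j$-period. Along $\Gamma_j$, the half-circle piece contributes the purely imaginary $i\pi(\varepsilon_j+t)$ to leading order, while the radial segment $v_j \in [-\epsilon, -r_j^2/\epsilon]$ contributes the real quantity $2(1+t\varepsilon_j)\log(r_j/\epsilon)$. The paths $\gamma_j$ and $\sigma(\gamma_j)$ stay in a fixed compact subset of $\Omega$ away from all necks, where by Lemma \ref{omegalim} and the uniform estimates in \cite{tr4}, $\omega$ depends continuously on $(r,p,t)$ and remains bounded as $r\to 0$. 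Collecting terms,
\begin{align*}
\re\int_{B_j}\omega \;=\; 2(1+t\varepsilon_1)\log(r_1/\epsilon) \;-\; 2(1+t\varepsilon_j)\log(r_j/\epsilon) \;+\; h_j(r_2,\dots,r_n;\, r_1, p, t),
\end{align*}
where $h_j$ is continuous in all its arguments and uniformly bounded as $r\to 0$.

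Finally, set $\tau_j = \log(r_j/\epsilon)$ and rewrite the $n-1$ equations $\re\int_{B_j}\omega = 0$ as the fixed point system
\begin{align*}
\tau_j \;=\; \tfrac{1+t\varepsilon_1}{1+t\varepsilon_j}\,\tau_1 \;+\; \tfrac{1}{2(1+t\varepsilon_j)}\,h_j(\tau_2,\dots,\tau_n; \tau_1, p, t), \qquad j=2,\dots,n.
\end{align*}
For $r_1$ small enough, the Jacobian of $(\tau_2,\dots,\tau_n)\mapsto\{\re\int_{B_j}\omega\}$ is diagonally dominated by the entry $-2(1+t\varepsilon_j)\delta_j^k$, and the IFT (or equivalently a Banach contraction in the shifted variables $s_j = \tau_j - \frac{1+t\varepsilon_1}{1+t\varepsilon_j}\tau_1$) produces a unique continuous branch $(r_2,\dots,r_n)$ depending continuously on $(r_1, p, t)$. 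The main obstacle, inherited from Traizet's framework, is securing uniform $C^1$-control of the remainder $h_j$ as the necks pinch, so that the off-diagonal terms in the Jacobian are negligible compared to the diagonal; once that estimate is in place, the fixed-point step is routine.
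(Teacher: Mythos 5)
Your overall strategy matches the paper's: use the $\sigma$-symmetry to show the imaginary part of $\int_{B_j}\omega$ is a multiple of $2\pi$, expand the real part in $\log r_j$ via the neck estimate, and close the argument with an implicit-function/fixed-point theorem in the log-radii. Two comments.

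First, a minor looseness: your identity $\sigma(B_j) \sim A_1 - A_j - B_j$ ``as 1-chains'' is not literally a chain equality. The paper's computation $B_j + \sigma(B_j) = \Gamma_1 + \sigma(\Gamma_1) - (\Gamma_j + \sigma(\Gamma_j))$ is a chain identity, but $\Gamma_j + \sigma(\Gamma_j)$ is only \emph{homologous} to $A_j$; since $\omega$ has poles, integrating over a homologous cycle can shift the value by multiples of $2\pi i$, and one must also deform the $B$-curves to avoid poles. The paper tracks this carefully (noting that the shift for $B_j$ and for $\sigma(B_j)$ are equal, so their sum shifts by a multiple of $4\pi i$) and gets $\im\int_{B_j}\omega \equiv 0 \pmod{2\pi}$; your exact formula $\im\int_{B_j}\omega = \pi(\varepsilon_1-\varepsilon_j)$ is only correct up to this $2\pi\Z$ ambiguity. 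The $\bmod\ 2\pi$ conclusion is what is needed, so this is not fatal, but the derivation needs to acknowledge the residue corrections.

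Second, and more substantively: the step you flag as the ``main obstacle'' --- uniform $C^1$-control of the remainder $h_j$ in the $\log r_k$ variables as the necks pinch --- is precisely the part of the argument you cannot leave open, since without it the diagonal-dominance claim is unsupported and your fixed-point step does not close. The paper sidesteps this by a change of variables rather than a quantitative derivative estimate on a non-compact domain: it renormalizes the period to $\mathcal{F}_j = \frac{1}{\log r_1}\,\re\int_{B_j}\omega$ and substitutes $r_j = \exp(s_j/\tau^2)$, $s_1 = 1$. The noded limit $r_j \to 0$ is then the finite boundary point $\tau = 0$, the leading divergent terms become $2(1+t\varepsilon_1) - 2s_j(1+t\varepsilon_j)$, and the bounded-analytic remainder of Lemma~\ref{omegaperre} contributes only $\tau^2\cdot(\analytic)$, so $\mathcal{F}_j$ extends smoothly to $\tau=0$ with an explicitly invertible Jacobian in $(s_2,\dots,s_n)$. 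The standard finite-dimensional IFT then applies directly. Your contraction-in-$\tau_j$ route could in principle be made to work --- $\partial h_j/\partial(\log r_k) = r_k\,\partial h_j/\partial r_k \to 0$ follows from the analyticity of $h_j$ in $r_k$ asserted in the lemma --- but you would need to spell out that observation rather than record it as an unresolved obstacle, and the paper's reparametrization is a cleaner way to encode the same fact.
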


\begin{proof}

We first compute,

\begin{align*}
B_j + \sigma(B_j) &= \Gamma_1 + \sigma(\gamma_j) - \Gamma_j - \gamma_j + \sigma(\Gamma_1) + \sigma^2(\gamma_j) - \sigma(\Gamma_j) - \sigma(\gamma_j) \\
&= \Gamma_1 + \sigma(\Gamma_1) - (\Gamma_j + \sigma(\Gamma_j)) = A_1 - A_j.
\end{align*}

Note that we can deform the $B$ curves if necessary, without changing their homology classes, to avoid poles of $\omega$.  This possibly changes the integral of $B_j$ by a multiple of $2\pi i$.  The integral of $\sigma(B_j)$ will change by that same multiple by symmetry and thus $\int_{B_j + \sigma(B_j)\omega}$ changes by a multiple of $4\pi i$.  Whenc,

\begin{align*}
    \int_{B_j}\omega - \int_{B_j}\overline\omega &= \int_{B_j}\omega + \int_{\sigma(B_j)}\omega\\
    &=\int_{A_1}\omega - \int_{A_j}\omega + \textmd{residues of }\omega\\
    &= 2\pi i(\varepsilon_1 + t) - 2\pi i(\varepsilon_j + t)  + \textmd{residues of }\omega\\
    &= 0 \mod 4\pi i.
    \end{align*}
    
Thus $\im \int_{B_j} \omega = 0 \textmd{ mod }2\pi$, as desired.  To compute the real part, we use the following

\begin{lemma}
\label{omegaperre}
For $r_1, ..., r_n$ close enough to 0, the $B$-periods of $\omega$ satisfy,

\begin{align*}
\int_{B_j}\omega = 2(1+\varepsilon_1 t)\log r_1 - 2(1+\varepsilon_j t)\log r_j + \analytic
\end{align*}
where $\analytic$ means a bounded analytic function of all $p_j$, $r_j$, and $t$.
\end{lemma}

\begin{proof}
Note that the integral of $\omega$ over $\gamma_j$ and $\sigma(\gamma_j)$ is already an analytic function of the parameters.

Since $\Gamma_j$ is a curve from $v_j = \epsilon$ to $v_j = \frac{-r_j^2}{\epsilon}$, we can use a result from \cite{tr4} to obtain the following estimate:

\begin{align}\label{omegaest}
\int_{\Gamma_j} \omega = (1 + \varepsilon_j t) \log \frac{-r_j^2}{\epsilon^2} + \analytic = 2(1 + \varepsilon_j t) \log\  r_j+ \analytic,
\end{align}
which proves the lemma.

\end{proof}

Consider the renormalized periods:

\begin{align*}
\mathcal{F}_j = \frac1{\log r_1} \re \int_{B_j}\omega.
\end{align*}

We wish for these periods to vanish for $j = 2, ..., n$, and we use the implicity function theorem to find values of $r_2, ... r_n$ that make this true.  The functions $\mathcal{F}_j$ extend continuously to 0 but not smoothly, so we make the standard substitution 

\begin{align*}
r_j = \exp\left(\frac{s_j}{\tau^2} \right),
\end{align*}

where $s_1 = 1$, and $\tau$ depends on $t$ and $r_1$.  We will describe later how $r_1$ and $\tau$ will depend on $t$. Thus, we have:

\begin{align*}
\mathcal F_j &= \tau^2\left(2(1 + t\varepsilon_1)\log \exp\left(\frac{1}{\tau^2} \right) - 2(1 + t\varepsilon_1)\log \exp\left(\frac{s_j}{\tau^2} \right) + \analytic  \right) \\
&= 2(1 + t\varepsilon_1) - 2s_j(1 + t\varepsilon_j) + \tau^2\cdot\analytic.
\end{align*}

This, as a function of $\tau, s_2, ..., s_n$, extends smoothly to $\tau=0$, and 

\begin{align*}
\mathcal F_j|_{\tau=0} = 2(1 + t\varepsilon_1) - 2s_j(1 + t\varepsilon_j).
\end{align*}

When $\tau = 0$ and for $t$ sufficiently small, we can solve $\mathcal F_j = 0$, and the derivative of $(\mathcal F_2, ..., \mathcal F_n)$ with respect to $s_2, ..., s_j$ is invertible.  Thus, we can apply the implicity function theorem to conclude the proof.

\end{proof}

\subsection{Integrating}

Now we obtain the Gauss map by integrating from the basepoint $z_0 = p_1 + \epsilon$. 

\begin{definition}
Let 

\begin{align*}
G(z) = \Lambda i\ \exp\left(\int_{z_0}^z \omega \right) \quad z\in \Sigma,
\end{align*}

\noindent where $\Lambda$ is a real constant making $\sigma^*G = \frac{-e^{\pi i t}}{\overline G}$ .  This defines a multivalued meromorphic function on $\Sigma$.  It has the following multivaluation:  If $\gamma$ is any cycle on $\Sigma$, then analytic continuation of $G$ along $\gamma$ multiplies $G$ by $\exp(2k\pi i t)$ where $k\in \Z$ is such that $\int_\gamma dh = 2k\pi$. 
\end{definition}

We want our screw motion $\sigma$ to be an orientation-reversing symmetry with half the rotation of $S_t$.  What this means for the Gauss map is that $G(\sigma(z)) =  \frac{-e^{\pi i t}}{\overline G(z)}$ for all $z \in \Sigma$.  This condition will determine the value of $\Lambda$.

\begin{lemma}
\label{Lambda}
There exists a unique $\Lambda >0$, depending on the parameters, such that 
 $\sigma^*G = \frac{-e^{\pi i t}}{\overline G}$.
\end{lemma}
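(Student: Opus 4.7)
The plan is a direct computation: expand both sides of the desired identity $\sigma^{*}G = -e^{\pi i t}/\overline{G}$ using the definition of $G$ and the symmetry $\sigma^{*}\omega = -\overline{\omega}$ proved above, then show the resulting equation on $\Lambda$ has a unique positive real solution.

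\textbf{Step 1: reduce to an equation for $\Lambda^{2}$.} Fix the path $\Gamma_{1}$, which goes from $z_{0} = p_{1}+\epsilon$ (where $v_{1}=\epsilon$) to $\sigma(z_{0}) = \overline{p_{1}}+\epsilon$ (where $w_{1}=\epsilon$). For an arbitrary $z\in\Sigma$, compute
\begin{align*}
G(\sigma(z)) \;=\; \Lambda i\,\exp\!\left(\int_{z_{0}}^{\sigma(z_{0})}\omega \,+\, \int_{\sigma(z_{0})}^{\sigma(z)}\omega\right),
\end{align*}
and change variables $u=\sigma(w)$ in the second integral to get $\int_{z_{0}}^{z}\sigma^{*}\omega = -\overline{\int_{z_{0}}^{z}\omega}$, using Lemma on $\sigma^{*}\omega$. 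Taking the integral from $z_{0}$ to $\sigma(z_{0})$ along $\Gamma_{1}$ and comparing with
\begin{align*}
\frac{-e^{\pi i t}}{\overline{G(z)}} \;=\; \frac{e^{\pi i t}}{\Lambda\, i}\,\exp\!\left(-\overline{\int_{z_{0}}^{z}\omega}\right),
\end{align*}
the $z$-dependence cancels and the desired identity becomes the single equation
\begin{align*}
\Lambda^{2} \;=\; e^{i\pi(1+t)}\exp\!\left(-\int_{\Gamma_{1}}\omega\right).
\end{align*}

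\textbf{Step 2: verify the right-hand side is positive real.} I would use the relation $\Gamma_{1}+\sigma(\Gamma_{1})\sim A_{1}$ (the figure ``$\Gamma_{j}+\sigma(\Gamma_{j})=A_{j}$'') together with $\sigma^{*}\omega=-\overline{\omega}$ to get
\begin{align*}
2\pi i(\varepsilon_{1}+t) \;=\; \int_{A_{1}}\omega \;=\; \int_{\Gamma_{1}}\omega + \int_{\Gamma_{1}}\sigma^{*}\omega \;=\; \int_{\Gamma_{1}}\omega - \overline{\int_{\Gamma_{1}}\omega} \;=\; 2i\,\im\!\int_{\Gamma_{1}}\omega,
\end{align*}
so $\im\int_{\Gamma_{1}}\omega = \pi(\varepsilon_{1}+t) \equiv \pi(1+t)\pmod{2\pi}$ because $\varepsilon_{1}=\pm 1$. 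This is exactly what is needed for the argument of $e^{i\pi(1+t)}\exp(-\int_{\Gamma_{1}}\omega)$ to be a multiple of $2\pi$, so
\begin{align*}
\Lambda^{2} \;=\; \exp\!\left(-\re\!\int_{\Gamma_{1}}\omega\right) \;>\; 0,
\end{align*}
which determines $\Lambda>0$ uniquely. Continuity in the parameters follows from continuity of $\re\int_{\Gamma_{1}}\omega$.

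\textbf{Main obstacle.} The computation itself is short; the delicate point is bookkeeping the multivaluation of $G$ so that the identity $\sigma^{*}G=-e^{\pi i t}/\overline{G}$ is meaningful. One has to commit to a specific path from $z_{0}$ to $\sigma(z_{0})$ (the path $\Gamma_{1}$ is the natural one because its endpoints match the identification $v_{1}w_{1}=-r_{1}^{2}$ cleanly, and $\Gamma_{1}+\sigma(\Gamma_{1})$ closes to $A_{1}$), and to check that changing the path by a homology cycle alters both sides of the equation for $\Lambda^{2}$ compatibly with the integer $B$-period condition from Proposition on $B$-periods of $\omega$. Once that is absorbed, existence and uniqueness of $\Lambda>0$ are immediate.
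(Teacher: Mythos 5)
Your proposal is correct and follows essentially the same line of argument as the paper's proof: expand $\sigma^{*}G$ using $\sigma^{*}\omega = -\overline{\omega}$, reduce to determining $\exp\bigl(\int_{\Gamma_1}\omega\bigr)$, use $\Gamma_1 + \sigma(\Gamma_1) \sim A_1$ together with the $A$-period normalization of $\omega$ to pin down the imaginary part of $\int_{\Gamma_1}\omega$, and conclude that the phase of the candidate $\Lambda^2$ cancels, leaving a positive real number. The paper's presentation writes the answer as $\Lambda = K^{-1/2}$ with $K = e^c$, $c = \re\int_{\Gamma_1}\omega$, while you collapse this to $\Lambda^2 = \exp\bigl(-\re\int_{\Gamma_1}\omega\bigr)$; these are the same thing, and your observation that $\pi(\varepsilon_1+t) \equiv \pi(1+t) \pmod{2\pi}$ is the cleaner way to phrase what the paper leaves implicit (the paper in fact has a small typo, writing $\im\int_{\Gamma_1}\omega = \varepsilon_1 + t$ where $\pi(\varepsilon_1+t)$ is meant, as is clear from the next line).
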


\begin{proof}

Using the symmetry of $\omega$, we get, for any $z \in \Sigma$,: 

\begin{align*}
    \sigma^*G(z) &= \Lambda i\ \exp\left(\int_{z_0}^{\sigma(z_0)} + \int_{\sigma(z_0)}^{\sigma(z)}\omega \right)\\
    &=\exp\left(\int_{z_0}^{\sigma(z_0)} \omega \right)\cdot\Lambda i\ \exp\left( \int_{z_0}^{z}\sigma^*\omega \right)\\
    &=I\Lambda i\ \exp\left( \int_{z_0}^{z}-\overline\omega \right)\\
    &=\frac{\Lambda^2I}{\overline{\Lambda i \exp\left( \int_{z_0}^{z}\omega\right)}}= \frac {\Lambda^2I} {\overline G(z)},\\
\end{align*}

\noindent where $I = \exp\left(\int_{z_0}^{\sigma(z_0)} \omega \right)$.  To calculate the value of $I$, note that given our choice of $z_0$, we have, modulo $2\pi i$,

\begin{align*}
\int_{z_0}^{\sigma(z_0)} \omega = \int_{\Gamma_1}\omega.
\end{align*}

Using that $\Gamma_1 + \sigma(\Gamma_1) = A_1$, we have

\begin{align*}
  \textmd{Im} \int_{\Gamma_1} \omega &=\frac{1}{2 i} \int_{\Gamma_1} (\omega - \overline\omega)\\
  &=\frac1{2i} \left(\int_{\Gamma_1} \omega + \int_{\Gamma_1} \sigma^*\omega\right)\\
    &= \frac1{2i}\int_{A_1}\omega =  \varepsilon_1 + t
\end{align*}

Hence, mod $2\pi i$, we have $\int_{z_0}^{\sigma(z_0)}\omega = c + \pi i\varepsilon_1 + \pi it$ , where $c\in \R$.  Thus, $I = -K e^{\pi it}$ where $K = e^c >0$ depends only on our choice of $z_0$.  Thus, in order to ensure that $\sigma^*G = \frac{-e^{\pi i t}}{\overline G}$, we need $\Lambda = K^{-1/2}$.  This gives:

\begin{align*}
\sigma^*G(z)  =  \frac{-K^{-1}K e^{\pi i t}}{\overline G(z)} = \frac{- e^{\pi i t}}{\overline G}.
\end{align*}

\end{proof}

We can estimate $\Lambda$ near the limit $r_1 \to 0$ using equation (\ref{omegaest}):

\begin{align}\label{Lambdaest}
    \nonumber\Lambda &= K^{-1/2} = \exp{\left(-\frac12\re\int_{\Gamma_1}\omega\right)}\\
    &=\exp{\left(-\frac12\left(1 + \varepsilon_1\right) \log r_1^2 + \analytic \right)}\\
    \nonumber&=\mathcal O\left(\frac{1}{r_1^{1+\varepsilon_1 t}}\right).
\end{align}

Finally, using the limit of $\omega$, we obtain the following

\begin{lemma}
\label{Glim}
On compact subsets of $\C_1 - \{p_j, q_k\}$, we have, as $\tau \to 0$,

\begin{align*}
G \sim \Lambda i \frac{G_0(z)}{G_0(z_0)},
\end{align*}
where  

\begin{align*}
G_0(z) = \frac{\prod_{i=1}^n(z-p_j)^{1+t\varepsilon_j}}{\prod_{k}(z-q_k)}.
\end{align*}

\end{lemma}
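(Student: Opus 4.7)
The strategy is to pass to the limit under the exponential in the defining formula
\begin{align*}
G(z) = \Lambda i\,\exp\!\left(\int_{z_0}^z \omega\right),
\end{align*}
using the convergence $\omega \to \omega_0$ already established in Lemma \ref{omegalim}. The plan is to compute the integral of the limit form $\omega_0$ explicitly, observe that the result is precisely $\log(G_0(z)/G_0(z_0))$, and then justify interchanging limit and integration by a compactness argument.

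First I would fix a compact set $K \subset \C_1$ disjoint from the $p_j$ and from (neighborhoods of) the limit positions of the zeros $q_k$ of $dh$. The basepoint $z_0 = p_1 + \epsilon$ lies in $K$ by the choice of $\epsilon$. Since $dh \to dh_0$ on compact subsets of $\C_1 - \{p_j\}$ (Lemma \ref{dhlim}), the zeros $q_k$ of $dh$ depend continuously on the parameters and converge to the zeros of $dh_0$; for $\tau$ small enough the moving $q_k$'s stay away from $K$, so all poles of $\omega$ other than those at the $p_j$'s are uniformly bounded away from $K$. Then Lemma \ref{omegalim} gives uniform convergence $\omega \to \omega_0$ on $K$, so for any path $\gamma \subset K$ from $z_0$ to $z$,
\begin{align*}
\int_\gamma \omega \;\longrightarrow\; \int_\gamma \omega_0 \qquad \text{as } \tau \to 0.
\end{align*}

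Next I would integrate $\omega_0$ directly. Using its partial-fraction form,
\begin{align*}
\int_{z_0}^{z} \omega_0 \;=\; \sum_{j=1}^n (1+t\varepsilon_j)\,\log\!\frac{z-p_j}{z_0-p_j} \;-\; \sum_k \log\!\frac{z-q_k}{z_0-q_k} \;=\; \log\!\frac{G_0(z)}{G_0(z_0)},
\end{align*}
where the logarithms are determined by the choice of path $\gamma$ (different choices differ by integer multiples of $2\pi i$, which disappear after exponentiation). Exponentiating and multiplying by $\Lambda i$ yields
\begin{align*}
G(z) \;=\; \Lambda i\,\exp\!\left(\int_\gamma \omega\right) \;\longrightarrow\; \Lambda i\,\frac{G_0(z)}{G_0(z_0)},
\end{align*}
which is the claim.

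The only real subtlety, and hence the main technical point to verify carefully, is the compactness argument: one must ensure that a single path $\gamma$ from $z_0$ to $z$ can be chosen avoiding \emph{all} poles of $\omega$ uniformly in $\tau$, which requires knowing that the zeros $q_k$ of $dh$ stay in the limit locations for small $\tau$. This follows from Hurwitz's theorem applied to $dh \to dh_0$ on compact subsets, together with the local coordinate behaviour already exploited in \cite{tr4}. Once uniform convergence of $\omega$ on $\gamma$ is in hand, the rest is a direct calculation.
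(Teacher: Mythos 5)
Your approach matches the paper's, which gives no proof and simply remarks that the lemma follows "using the limit of $\omega$" (Lemma \ref{omegalim}); integrating $\omega_0$, recognizing the result as $\log(G_0(z)/G_0(z_0))$, and exponentiating is exactly the intended argument. Your extra care about path-independence and keeping the $q_k$ away from the path via Hurwitz-type stability is a welcome elaboration of a point the paper leaves implicit.
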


Both $G$ and $G_0$ are multi-valued, but their multi-valuation is the same given analytic continuations along the same paths.  

We will normalize $t$ so that it will vanish when $\tau = 0$, making $G$ and $G_0$ single-valued.  Now, in the limit, $G_0$ has the same zeros and poles as the function $\frac{dz}{dh_0}$, so we obtain,

\begin{align*}
G\sim \Lambda c_0 \frac{dz}{dh_0}, \qquad \textmd{where } c_0 = \sum_{j=1}^n \frac{\varepsilon_j}{z_0 - p_j}.
\end{align*}

\section{Horizontal period problem}
\label{Per}

We now consider the horizontal period problem and use the implicit function theorem to guarantee that the periods are closed in a neighborhood of $\tau = 0$. Let 

\begin{align*}
\mu = \frac12 \left( \overline{G^{-1}dh} - Gdh \right) = dx_1 + i dx_2
\end{align*}
be the horizontal differential.  This form is multivalued, so its integral on a closed curve is not homology invariant.  

Consider the curves $C_j$, the composition of:

\begin{enumerate}
    \item The curve $A_1$,
    \item the path $\gamma_j$ from $v_1 = \epsilon$ to $v_j = \epsilon$,
    \item the curve $A_j^{-1}$,
    \item the path $\gamma_j^{-1}$.
\end{enumerate}

\def\fw{3.5in}
\def\fww{2.1in}
\begin{figure}[H]
 \begin{center}
   \subfigure[On $\Sigma$]{\includegraphics[width=\fw]{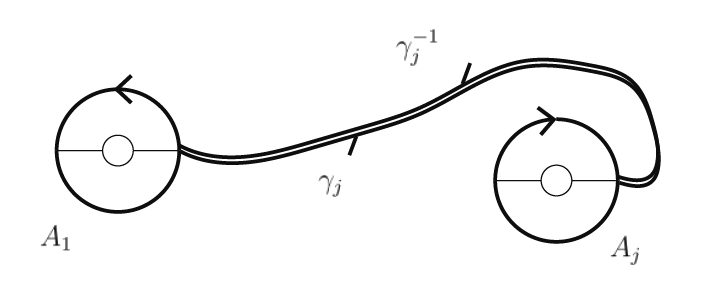}}
   \subfigure[On $M_t$, with $B$]{\includegraphics[width=\fww]{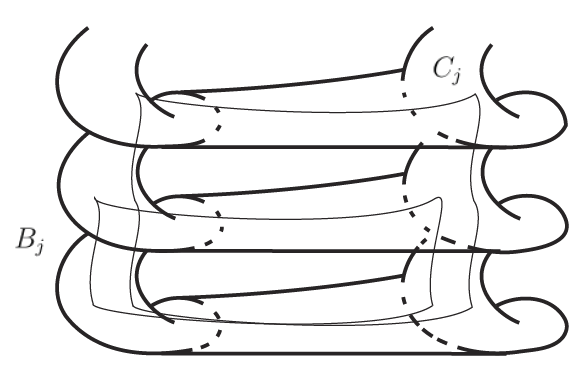}}
 \end{center}
 \caption{C cycles}
 \label{fig:Ccycle}
\end{figure}

These are meant to lift to closed curves, as do the $B_j$ curves, so we need their horizontal periods to be closed.  Note that all $C_j$ and $B_j$ have basepoint $z_0 = p_1 + \epsilon$.  The same argument as Proposition 2 of \cite{tw1} proves the following

\begin{proposition}
\label{horper}
Suppose the equations
\begin{align*}
\int_{B_j} \mu = \int_{C_j} \mu = 0 \qquad \forall\ j \geq 2,
\end{align*}
are satisfied.  Then there exists a screw motion $S_t$ of angle $2\pi t$ and translation part $(0, 0, 2\pi)$ such that $Re \int_{z_0}^z \phi$ is well-defined modulo $S_t$, where $\phi = (\phi_1, \phi_2, \phi_3)$ are the components of the Weierstrass formula.
\end{proposition}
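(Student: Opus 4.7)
The plan is to follow the blueprint of Proposition 2 of \cite{tw1}, translating the requirement that the Weierstrass integral $X(z) = \re \int_{z_0}^z \phi$ descend from the universal cover $\tilde\Sigma$ to a well-defined map $\Sigma \to \R^3/\langle S_t\rangle$ into integral conditions along a generating set of cycles. Concretely, descent is equivalent to the equivariance $X(\gamma\cdot \tilde z_0) = S_t^{n(\gamma)}\cdot X(\tilde z_0)$ for every deck transformation $\gamma$, where $n(\gamma) := \frac{1}{2\pi}\re\int_\gamma dh$.

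First I would dispatch the vertical component. Since $\phi_3 = dh$, we have $x_3 = \re\int dh$; Lemma \ref{dhper} together with the defining normalization $\int_{A_j}dh = 2\pi$ yield $n(A_j) = 1$, $n(B_j) = 0$, and $n(C_j) = 1 - 1 = 0$ (using $C_j \sim A_1 - A_j$ in $H_1(\Sigma)$). So $A$-cycles correspond to exactly one application of $S_t$ vertically while $B_j$ and $C_j$ cycles do not shift height, and the vertical part of equivariance is automatic on each generator.

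Next I would pin down the screw-motion axis. The horizontal action of $S_t^n$ on a point with complex horizontal coordinate $w$ and axis at $w_0$ is $w \mapsto e^{2\pi i n t}(w - w_0) + w_0$; thus the $A_1$-equivariance $\int_{A_1}\mu = (1 - e^{2\pi i t})(w_0 - w(z_0))$ is one complex equation, always solvable for $w_0$ (assuming $t \notin \Z$, which holds for small $t$). Placing the axis appropriately absorbs the $A_1$-constraint completely, and so no hypothesis is needed there. The remaining cycles must honestly close. For $B_j$ with $j\geq 2$, since $n(B_j)=0$, equivariance collapses to the literal vanishing $\int_{B_j}\mu = 0$. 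For $A_j$ with $j \geq 2$, equivariance demands $\int_{A_j}\mu = \int_{A_1}\mu$ (both equal the same $(1 - e^{2\pi i t})(w_0 - w(z_0))$); tracing $C_j = A_1 \gamma_j A_j^{-1}\gamma_j^{-1}$ on the universal cover and tracking the factors $e^{\pm 2\pi i t}$ that $G$ accumulates around $A_1$ and $A_j^{-1}$ shows that $\int_{C_j}\mu = 0$ is precisely equivalent to this equality. Since $\{[A_j],[B_j]\}_{j=2}^n$ is a basis for $H_1(\Sigma)$ and $[A_1]$ lies in their span, verifying equivariance on these generators suffices.

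The only subtle point is the multivaluation bookkeeping in the $C_j$ computation: one must carefully track how $G$, and hence $\mu$, transforms along each sub-arc, and verify that the cancellations producing $\int_{C_j}\mu = \int_{A_j}\mu - \int_{A_1}\mu$ (modulo terms absorbed by the axis choice) really occur. This is identical to the bookkeeping in \cite{tw1}, because the multivaluation factor around each $A$-cycle depends only on the single rotation angle $2\pi t$ and the charge $\varepsilon_j = \pm 1$, not on the planar location of $p_j$. Hence the straight-line restriction of \cite{tw1} plays no role, and the argument transfers verbatim to our more general configuration.
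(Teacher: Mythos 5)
Your proposal follows the same strategy that Traizet--Weber use in \cite{tw1}, which is what the paper invokes verbatim, so the overall architecture is right: compute the vertical winding numbers $n(\gamma)$ from $\re\int_\gamma dh$, choose the horizontal axis to make the $A_1$-holonomy equal $S_t$, and then read the $B_j$ and $C_j$ conditions as the remaining equivariance constraints on a generating set of $\pi_1(\Sigma)$. One place where your phrasing is imprecise and should be tightened is the claim that $A_j$-equivariance amounts to the literal equality $\int_{A_j}\mu = \int_{A_1}\mu$, with both sides equal to $(1-e^{2\pi i t})(w_0 - w(z_0))$. Because $\mu$ is multivalued and $A_j$ is based at a different point than $A_1$, the $A_j$-constraint really involves the developed position of the basepoint of $A_j$, not $w(z_0)$; equivalently, the loop that naturally enters the argument based at $z_0$ is the conjugate $\gamma_j A_j\gamma_j^{-1}$, and $\int_{\gamma_j A_j\gamma_j^{-1}}\mu\neq\int_{A_j}\mu$ in general. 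What the $C_j$-condition actually encodes is $\rho(A_1)=\rho(\gamma_j A_j\gamma_j^{-1})$ for the holonomy representation $\rho$, and since the axis was tuned so that $\rho(A_1)=S_t$, this forces $\rho(\gamma_j A_j\gamma_j^{-1})=S_t$, which is the correct form of $A_j$-equivariance. With that correction your cancellation bookkeeping goes through, and your closing observation---that the multivaluation factor is controlled solely by $\int_{A_j}dh=2\pi$ and the rotation angle $2\pi t$, not by the planar position of $p_j$, so the straight-line hypothesis of \cite{tw1} is immaterial---is exactly the reason the paper is entitled to reuse that argument here.
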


We now reduce the period problem to only $C$-periods.

\begin{proposition}
\label{BCper}
For $t<1$, the $B$-periods vanish if and only if the $C$-periods do.
\end{proposition}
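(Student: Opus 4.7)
The plan is to use the symmetry $\sigma$ to write each $B_j$-period of $\mu$ as an explicit nonzero scalar multiple of the corresponding $C_j$-period; since the scalar does not vanish for $t<1$, the ``iff'' follows.

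First I would work out how $\mu$ transforms under $\sigma$. Starting from Lemma \ref{dh} ($\sigma^*dh = \overline{dh}$) and Lemma \ref{Lambda} ($\sigma^*G = -e^{\pi i t}/\overline{G}$), a direct computation gives
\begin{align*}
\sigma^*(G\,dh) = -e^{\pi i t}\,\overline{G^{-1}dh}, \qquad \sigma^*(\overline{G^{-1}dh}) = -e^{\pi i t}\,G\,dh,
\end{align*}
so that $\sigma^*\mu = e^{\pi i t}\mu$. In particular, $\int_{\sigma(B_j)}\mu = \int_{B_j}\sigma^*\mu = e^{\pi i t}\int_{B_j}\mu$.

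Next I would relate the cycles $B_j+\sigma(B_j)$ and $C_j$. The same composition argument as in Proposition \ref{omegaper} (using that $\Gamma_j+\sigma(\Gamma_j)$ is homologous to $A_j$) gives $[B_j+\sigma(B_j)] = [A_1]-[A_j]$, and by construction $[C_j] = [A_1]-[A_j]$, so the two cycles are homologous. Following the strategy of Proposition~2 in \cite{tw1}, one upgrades this to an equality of integrals of $\mu$ by deforming representative paths off the poles of $\mu$ so that their lifts to the appropriate branched cover have matching endpoints, yielding
\begin{align*}
\int_{B_j+\sigma(B_j)}\mu = \int_{C_j}\mu.
\end{align*}
Combined with the previous display, this gives $(1+e^{\pi i t})\int_{B_j}\mu = \int_{C_j}\mu$. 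For $t<1$ we have $e^{\pi i t}\neq -1$, so $1+e^{\pi i t}\neq 0$, and therefore $\int_{B_j}\mu = 0$ if and only if $\int_{C_j}\mu = 0$.

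The main obstacle is the cycle step: because $\mu$ is multi-valued (its multi-valuation inherited from $G$), a homology identity does not automatically produce an identity of integrals, and one must verify that the $dh$-monodromies of the lifts of $B_j+\sigma(B_j)$ and of $C_j$ agree. This should be manageable since both cycles are assembled from the same pieces $\gamma_j$, $\sigma(\gamma_j)$, and $A$-cycle fragments around $p_1$ and $p_j$, so the branches of $G$ can be made to match by choosing representatives as in \cite{tw1}.
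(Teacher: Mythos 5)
Your proposal is correct and follows essentially the same route as the paper: compute $\sigma^*\mu = e^{\pi i t}\mu$, relate $C_j$ to $B_j * \sigma(B_j)$, and divide by the nonzero factor $1+e^{\pi i t}$ for $t<1$. The subtlety you flag about multi-valuation is exactly the one the paper addresses, and it resolves it by observing that vanishing of a $\mu$-period is independent of basepoint and then working in $\pi_1(\Sigma, \sigma(z_0))$, where $\widetilde{C_j}$ is \emph{homotopic} (not merely homologous) to $\widetilde{B_j} * \widetilde{\sigma(B_j)}$, so the branches of $G$ match automatically.
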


\begin{proof}

We first compute $\sigma^*\mu$.  Since $\sigma^*dh = \overline{dh}$ and $\sigma^*G = \frac{-e^{\pi i t}}{\overline G}$, we conclude that,

\begin{align*}
\sigma^*\mu = \sigma^*\left(\overline{G^{-1}dh} - Gdh\right) = \overline{\frac1{\frac{-e^{\pi i t}}{\overline G}}\overline{dh}} - \frac{-e^{\pi i t}}{\overline G}\overline{dh} = e^{\pi i t}\mu.
\end{align*}

Now the key to the argument is that if a $\mu$-period over a loop vanishes for a given basepoint then it vanishes for any basepoint.  So we choose $\sigma(z_0)$ as our basepoint and consider the loops $\widetilde{B_j}$ and $\widetilde{C_j}$, which correspond to $B_j$ and $C_j$ in $\pi_1\left(\Sigma, \sigma(z_0)\right)$.  Then $\widetilde{C_j}$ is homotopic to $\widetilde{B_j} * \widetilde{\sigma(B_j})$.  Therefore, we have,

\begin{align*}
\int_{\widetilde{C_j}}\mu = \int_{\widetilde{B_j}}\mu + \int_{\widetilde{\sigma(B_j})}\mu = \int_{\widetilde{B_j}}\mu + \int_{\widetilde{B_j}}e^{\pi i t}\mu = (1 + e^{\pi i t})\int_{\widetilde{B_j}}\mu.
\end{align*}

We conclude that for all $t < 1$, $\int_{B_j}\mu = 0$ if and only if $\int_{C_j}\mu = 0$.

\end{proof}

\begin{proposition}
\label{balance}
If the configuration is balanced, then the $C $-periods all vanish as $\tau \to 0$.
\end{proposition}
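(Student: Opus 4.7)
\emph{Proof plan.} The plan is to compute $\int_{C_j}\mu$ asymptotically as $\tau \to 0$ using the limits of $G$ and $dh$ from Lemmas~\ref{dhlim} and~\ref{Glim}, and to show that this limit rearranges into a combination of the forces $F_j$ from Definition~\ref{config}; the balance hypothesis then forces vanishing.

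I would first decompose the $C_j$-cycle via $C_j = A_1 * \gamma_j * A_j^{-1} * \gamma_j^{-1}$. Since $G$ picks up the factor $e^{2\pi i t}$ upon analytic continuation around $A_1$, the differential $\mu$ is multiplied by $e^{2\pi i t}$ on the legs of $C_j$ traversed after $A_1$. Tracking this yields
\[
\int_{C_j}\mu = \int_{A_1}\mu - e^{2\pi i t}\int_{A_j}\mu + (e^{2\pi i t}-1)\int_{\gamma_j}\mu.
\]

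Next, I would evaluate each integral using the limiting formulas. By Lemma~\ref{Glim}, $G\,dh \sim \Lambda c_0\,dz$ is exact, so $\int_{A_j} G\,dh$ vanishes to leading order, and the $A$-period of $\mu$ is governed by $\overline{\int_{A_j} G^{-1}dh}$. A local Laurent expansion of $G_0^{-1}\,dh_0$ near $p_j$ would produce a residue involving $\beta_j := \sum_{k \neq j}\varepsilon_k/(p_j - p_k)$, the very quantity appearing in $F_j = \overline{p_j} + \beta_j$. For the $\gamma_j$-period, the same approximation gives $\int_{\gamma_j} G\,dh \sim \Lambda c_0(p_j - p_1)$, contributing a horizontal displacement term.

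Combining everything and using $e^{2\pi i t} - 1 \sim 2\pi i t$, I expect the three pieces to rearrange into a multiple of $\overline{F_j} - \overline{F_1}$ (or a similar combination of balance forces): the $A$-residues supply the $\beta_j$ terms after conjugation, while the $\gamma_j$-displacement weighted by $(e^{2\pi i t}-1)$ supplies the $\overline{p_j}$-like contribution. Under the balance hypothesis every $F_j$ vanishes, so this combination is zero in the limit.

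The main obstacle I anticipate is matching the different asymptotic orders: the $A$-contributions scale like $1/\Lambda$, while $(e^{2\pi i t}-1)\int_{\gamma_j}\mu$ is nominally of order $t\Lambda$. Getting these two scales to conspire into the precise balance expression will require careful bookkeeping based on the relationship between $t$, $r_1$, and $\tau$ established in the proof of Proposition~\ref{omegaper}, in particular the substitution $r_j = \exp(s_j/\tau^2)$ with $t$ normalized to vanish at $\tau=0$.
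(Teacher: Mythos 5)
Your decomposition $\int_{C_j}\mu = \int_{A_1}\mu - e^{2\pi i t}\int_{A_j}\mu + (e^{2\pi i t}-1)\int_{\gamma_j}\mu$ is a correct accounting of the multivaluation, and your treatment of the $\gamma_j$-leg is sound: weighted by $e^{2\pi i t}-1\sim 2\pi i t$ and combined with $G\,dh\sim\Lambda c_0\,dz$, it produces the $(p_1-p_j)$ term once one \emph{prescribes} $\lim_{\tau\to 0}\Lambda^2 t = 4/|c_0|^2$ (a choice the paper makes at the start of this very proof, not a consequence of Proposition~\ref{omegaper} as you suggest). But your handling of the $A$-periods has a genuine gap. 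You argue that, since $G\,dh\sim\Lambda c_0\,dz$ is exact, $\int_{A_j}G\,dh$ ``vanishes to leading order'' and hence $\int_{A_j}\mu$ is governed by $\overline{\int_{A_j}G^{-1}dh}$ alone. That is false at the order that matters: both $\int_{A_j}G^{-1}dh$ and $\int_{A_j}G\,dh$ are $O(1/\Lambda)$, and the paper shows, using the symmetry $\sigma^*(G\,dh)=-e^{\pi i t}\overline{G^{-1}dh}$ together with the single-valuing corrective factor $\psi(z)=(z-p_1)^{-\varepsilon_1 t}(z-p_j)^{-\varepsilon_j t}$, that $\Lambda\int_{A_j}\psi G\,dh\to -\Lambda\overline{\int_{A_j}G^{-1}dh}$. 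Dropping the $G\,dh$ contribution loses half of the force term $\beta_j := \sum_{k\neq j}\varepsilon_k/(p_j-p_k)$: your renormalized period would come out as $\tfrac12\overline{(\beta_1-\beta_j)}+(p_1-p_j)$ rather than $\overline{(\beta_1-\beta_j)}+(p_1-p_j)$, so the equations you derive are not the balance equations of Definition~\ref{config}.

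Furthermore, your proposed remedy of a local Laurent expansion cannot supply the missing piece. The limiting form $G_0\,dh_0$ is actually holomorphic at each $p_j$ (the simple zero of $G_0$ cancels the simple pole of $dh_0$), so the residue you seek is identically zero in the nodal limit; the $O(1/\Lambda)$ correction to $\int_{A_j}G\,dh$ comes from the finite-$\tau$ discrepancy between $G$ and $G_0$ near the neck, which the limiting expansion discards. The paper sidesteps this by never expanding $G\,dh$ near $p_j$ at all: it uses $\sigma$ to convert the $\psi G\,dh$ period into a $G^{-1}dh$ period, which \emph{is} accessible by residues. You need some version of that step---either the $\psi$-and-$\sigma$ argument itself, or an equally precise control of the subleading asymptotics of $G$ at the neck---before the pieces of your decomposition can be assembled into $F_1-F_j$.
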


\begin{proof}
This proof is a variant of one found in \cite{tw1}, that is, we estimate the integrals for $G^{-1}dh$ and $Gdh$ separately, as $\tau \to 0$.  We begin by prescribing the following relation:

\begin{align}
    \lim_{\tau \to 0} \Lambda^2t = \frac4 {|c_0|^2}.
\end{align}
This is possible by the estimate (\ref{Lambdaest}) since $\Lambda^2 \sim \frac1{r_1^{2}} = \exp\left(\frac2{\tau^2}\right) \to \infty$ as $\tau \to 0$.  We thus have,

\begin{align*}
    t = \mathcal O(r_1^2) \to 0
\end{align*}
as $\tau \to 0$, as desired.

Since $C_j$ is in $\Omega$, we can use the limit of $G^{-1}dh$ in $\C_1$:  

\begin{align*}
\lim_{\tau \to 0} \Lambda \int_{C_j}G^{-1}dh = \int_{C_j}\frac1{c_0} \frac{dh_0^2}{dz} = \frac{2\pi i}{c_0} \left(\varepsilon_1 \res_{p_1} \frac{dh_0^2}{dz} - \varepsilon_j \res_{p_j} \frac{dh_0^2}{dz}\right).
\end{align*}

We now compute the residues:

\begin{align*}
    \res_{p_j} \frac{dh_0^2}{dz} &= \res_{p_j} \left(\sum_{k=1}^n \frac{-\varepsilon_k i}{z - p_k}\right)^2\\
    &= - \res_{p_j} \left( \frac1{(z-p_j)^2} + \frac2{z-p_j}\sum_{k\neq j} \frac{\varepsilon_j\varepsilon_k}{z - p_k} + \sum_{k, l \neq j} \frac{\varepsilon_k \varepsilon_l}{(z - p_k)(z - p_l)} \right)\\
    &= -2\sum_{k\neq j} \frac{\varepsilon_j\varepsilon_k}{(p_j - p_k)},
\end{align*}
and conclude,

\begin{align*}
\lim_{\tau \to 0} \Lambda \int_{C_j}G^{-1}dh = - \frac{4\pi i}{c_0} \left(\sum_{k\neq 1} \frac{\varepsilon_1\varepsilon_k}{(p_1 - p_k)} - \sum_{k\neq j} \frac{\varepsilon_j\varepsilon_k}{(p_j - p_k)}\right)
\end{align*}

Estimating $\Lambda \int_{C_j}Gdh$ is more difficult since $\Lambda Gdh \to \Lambda^2 c_0 dz$ becomes infinite, so we will compare it with the integral of $\Lambda G^{-1} dh$.  We wish to use the more symmetric path $A_1 - A_j$ homologous to $C_j$, but since $G$ is not single-valued, the integral is not homology invariant.  We thus use the corrective (multivalued) factor defined on $\C_1$ by:

\begin{align*}
\psi(z) =  (z - p_1)^{-\varepsilon_1 t}(z - p_j)^{-\varepsilon_j t}
\end{align*}

The form $\psi Gdh$ is single-valued on $C_j$, so the integral will be the same as that on $A_1 - A_j$.    Hence,

\begin{align*}
\int_{C_j} \psi Gdh = \int_{A_1} \psi Gdh - \int_{A_j} \psi Gdh.
\end{align*}

Note also, for any $j$, that for $z\in A_j$,  
\begin{align*}
\psi(\sigma(z)) &= \psi(\frac{-r_j^2}{\overline{z-p_j}} + p_j) \sim t^{\pm t} \to 1.
\end{align*}

Using that $\sigma(A_j) = A_j$, we obtain,

\begin{align*}
\Lambda \int_{A_j} \psi Gdh = \Lambda \int_{\sigma(A_j)} \sigma^*(\psi Gdh) = \Lambda \int_{A_j}  \frac{-e^{\pi it} \psi \circ \sigma}{\overline { G}}\overline{dh} \to \lim_{\tau \to 0} -\Lambda \overline{\int_{A_j} G^{-1}dh}
\end{align*}
as $\tau \to 0$.  Hence,

\begin{align*}
\lim_{\tau\to 0} \Lambda \int_{C_j} \psi Gdh = -\overline{
\lim_{\tau\to 0} \Lambda \int_{C_j} G^{-1}dh},
\end{align*}
which we know from the above computation.

The computation of the integral of $(1-\psi)Gdh$ is also the same as in \cite{tw1}:

\begin{align*}
 \lim_{\tau \to 0} \Lambda \int_{C_j} (1 - \psi) Gdh &= \lim_{\tau \to 0} -\Lambda t \int_{C_j} Gdh \frac{\left((z - p_1)^{-\varepsilon_1}(z - p_j)^{-\varepsilon_j}\right)^t-1}{t}  \\
 &= \lim_{\tau \to 0} (-\Lambda^2 t)c_0 \int_{C_j} \log\left((z - p_1)^{-\varepsilon_1}(z - p_j)^{-\varepsilon_j}\right)dz\\
 &= \lim_{\tau \to 0} (-\Lambda^2 t)c_0\cdot2\pi i(p_1-p_j).
\end{align*}

Using the normalizations $\lim_{\tau \to 0} \Lambda^2t = \frac4 {|c_0|^2}$, we obtain,

\begin{align*}
\lim_{\tau\to 0} \Lambda \int_{C_j}(1 - \psi) Gdh = -\frac{8 \pi i}{\overline{c_0}} (p_1 - p_j).
\end{align*}

Hence, we have: 

\begin{align*}
\int_{C_j}\mu = \frac{4\pi i}{\overline{c_0}}\left(\overline{\sum_{k \neq 1} \frac{\varepsilon_k}{p_1 - p_k} - \sum_{k \neq j} \frac{\varepsilon_k}{p_j - p_k}} + p_1  - p_j    \right)
\end{align*}

We define the renormalized periods

\begin{align*}
\mathcal F_j = \frac{\overline{c_0}\Lambda}{4\pi i}\int_{C_j} \mu
\end{align*}
for $j \geq 2$.  Since the $C_j$ curves lie in a compact subset of $\C_1 - \{p_j\}$ and since the forms converge smoothly, we conclude that these periods extend smoothly to $\tau = 0$.  Since 

\begin{align*}
\mathcal F_j = F_1 - F_j
\end{align*}
when $\tau = 0$, the period problem amounts to all $F_j$ being equal.  Translating the points in the configuration by the same amount allows us to assume without loss that the period problem is equivalent to $F_j = 0$ for all $j$.    

As mentioned in Section \ref{mains}, we can assume that $p_1$ is real and consider $F = (F_1, ..., F_n)$ as $2n$ real functions in $2n-1$ variables.  Note that there is a linear dependence of the derivative vectors of $F$ coming from rotation (which allows us to write $\frac{\partial \re F_1}{\partial p}$ in terms of the other derivatives).  Therefore, by the implicit function theorem, if the configuration is non-degenerate, then there exist unique parameters $x_j = \re p_j$, $y_j = \im p_j$ as functions of $\tau$ that solve the period problem in a neighborhood of $\tau = 0$.

\end{proof}

\section{Proof of Embeddedness} 
 
It remains to show that the SMIMS $M_t$ corresponding to a balanced configuration are embedded near the limit.  To this end, we modify proof in \cite{tw1}, and show that the surface is embedded in three overlapping regions.  The major differences are that we do not assume the existence of any straight lines, we use a different conformal parameter, and we only use the symmetry $\sigma$.  
 
The original argument shows that after rescaling and translating, the image of $\Omega$ (see subsection \ref{Hom}) converges to the graph of the multivalued function 
 
 \begin{align*}
 f(z) = \sum_{j=1}^n \varepsilon_j \arg(z - p_j).
\end{align*}

The symmetry implies that on $\sigma(\Omega)$, the images converges to the graph of $f(z) + \pi$.  These are both embedded and will not intersect each other.

We now consider each region $\frac{r_j^2}\epsilon < |z - p_j| \leq cr_j$, where $c>>1$.  Traizet and Weber use a parameter for slit region of their construction, but we use the following parameter for the annular regions:

\begin{align*}
\zeta =\frac{z - p_j}{r_j}.
\end{align*}

In the region $\frac{r_j}\epsilon \leq|\zeta| \leq c$ of $\C_1$, we have,

\begin{align*}
\lim_{\tau \to 0}dh =  \frac{-i\varepsilon_j d\zeta}{\zeta}, \qquad \lim_{\tau \to 0}\omega = \frac{d\zeta}{\zeta},
\end{align*}
where the convergence is uniform on compact subsets of $0 < |\zeta| < c$.  To get $G$, we integrate,

\begin{align*}
\lim_{\tau\to 0}G = \Lambda \exp\left(\int_1^\zeta \frac{du}u\right) = \Lambda \zeta,
\end{align*}
where $\Lambda = \exp\left(-\frac12 \re \int_1^{-1}\frac{d\zeta}\zeta\right) = 1$, as calculated in the proof of lemma \ref{Lambda} (here the basepoint of integration is $z_0 = p_j + r_j$).  These are the Weierstrass data of a helicoid, right or left depending on if $\varepsilon_j = 1$ or $-1$.  Therefore, the image of $|z - p_j| \leq c$ is embedded near the limit.
 
It remains to show that the image is embedded for $cr_j < |z - p_j| < \epsilon$.  Using the estimate 

\begin{align*}
    |G(z)| = \left(\frac{|z - p_j|}{r_j}\right)^{1 + \varepsilon_j t} \cdot \mathcal O(1),
\end{align*}
we can assume $c$ is large enough to make $|G|>1$, and hence the function $(X_1, X_2)$ is regular. 

The (mulivalued) image of this annulus is an infinite periodic strip bounded by two curves close to helices (see the previous two cases) and thus have monotone height.  We claim that the intersection of this image with any horizontal plane consists of a simple curve joining each boundary.

This intersection is always non-empty, and there cannot be a null-homotopic loop, by the maximum principle.  There can also not be a curve with two endpoints on the same boundary as its height is monotone.  Therefore, there must be a curve joining the two boundaries, and it must be unique, for the same reasons as the cases we just excluded.  
 
We now claim that this curve must be simple.  Indeed, if it intersects itself, then continuing vertically there must be an annular region where the surface intersects itself.  Because $|G| > 1$, this region must narrow as $h$ either increases or decreases.  But since the strip is periodic, it must stop narrowing.  Moreover, since $|G| > 1$, this will only happen at a cone point, contradiction the assumption that $(X_1, X_2)$ is regular.  

Since the cross sections with respect to each horizontal plane consist of a unique simple curve, we conclude that this region is embedded.

We conclude that the entire surface is embedded, and the same argument as in \cite{tw1} shows that the rescaled surface converges to the multigraph described above.
 
\section{Helicoidal surfaces with dihedral symmetries}

We now restrict our attention to configurations which are dihedrally symmetric.  The dihedral symmetries induce symmetries of the minimal surface, and they simplify the balance equations significantly.

\subsection{Dihedral symmetries}
\label{symmetries}

We first consider some of the symmetries induced by dihedrally symmetric configurations.  

\begin{definition}
We call a configuration  $k$-fold dihedrally symmetric if it is invariant under the dihedral group $D_k$ of order $2k$ and, if $p_l = \eta(p_j)$ for some $\eta\in D_k$, then $\varepsilon_l = \varepsilon_j$.
\end{definition}

\begin{remark}
For simplicity of notation, we will assume that our dihedrally symmetric configurations contain a point $p_0$ at the origin with charge $\varepsilon_0$.  If $\varepsilon_0=0$, then there is no central helicoid, and the point does not factor into the count of $n$. 
\end{remark}

Let $M_t$ be a SMIMS corresponding to a balanced, non-degenerate  dihedrally symmetric configuation.  The symmetries of the configuration induce symmetries of $\Sigma = M_t/S_t$ and of the surface itself, which we describe below.  We will focus on two symmetries which generate $D_k$, namely a reflection and a rotation.

Let $\rho$ be the symmetry of $\Sigma$ defined by conjugation in each copy of $\hat\C$ and let $\theta$ be the rotation of angle $\frac{2\pi}k$ about the origin, counter-clockwise in $\C_1$ and clockwise in $\C_2$.

\begin{proposition}
\label{syms}
The symmetries $\rho$ and $\theta$ are well-defined on $\Sigma$. The map $\rho$ induces a rotation across a horizontal line on the surface, and $\theta$ induces a vertical screw motion of angle $\frac{2\pi(1 + \varepsilon_0 t)}k$, with translation $\frac{2\varepsilon_0 \pi}k$.
\end{proposition}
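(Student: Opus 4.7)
The argument has three parts: verifying that $\rho$ and $\theta$ descend to $\Sigma$, computing their action on the Weierstrass data $(dh,\omega,G)$, and translating that into an isometry of $M_t$.

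First, well-definedness. The uniqueness assertion in Proposition \ref{omegaper} forces the neck parameters to inherit the $D_k$-symmetry of the configuration, so $r_l = r_j$ whenever $p_l,p_j$ lie in the same orbit. A coordinate check then shows both maps respect the identification $v_j w_j = -r_j^2$: under $\rho$ we have $v_l(\rho(z)) = \overline{v_j(z)}$ and $w_l(\rho(z')) = \overline{w_j(z')}$ with $p_l = \overline{p_j}$, so conjugating the identification preserves it; under $\theta$, the rotations in the two copies of $\hat\C$ produce factors $e^{2\pi i/k}$ in $v_l$ and $e^{-2\pi i/k}$ in $w_l$ which cancel in the product.

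Second, the action on differentials. The forms $dh$ and $\omega$ are uniquely characterised by their $A$-periods, poles, and residues. Since $\rho$ sends $A_j$ to $-A_l$ (reversing orientation, $p_l = \overline{p_j}$) while $\theta$ sends $A_j$ to $A_l$ (preserving orientation), period comparison together with matching pole/residue data yields
\[
\rho^* dh = -\overline{dh},\quad \rho^*\omega = \overline{\omega},\quad \theta^* dh = dh,\quad \theta^*\omega = \omega.
\]
Integrating the defining formula $G(z) = \Lambda i\exp\int_{z_0}^z \omega$, and using $\rho(z_0) = z_0$ (valid since $p_1 \in \R$), gives $\rho^* G = -\overline{G}$, while $\theta^* G = e^{J_\theta}\,G$ with $J_\theta := \int_{z_0}^{\theta(z_0)}\omega$.

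Third, the geometric interpretation. Plugging $\rho^* dh = -\overline{dh}$ and $\rho^* G = -\overline{G}$ into the Weierstrass integrands gives $\rho^*\phi_1 = \overline{\phi_1}$, $\rho^*\phi_2 = -\overline{\phi_2}$, $\rho^*\phi_3 = -\overline{\phi_3}$, which integrate to $\rho\colon (x_1,x_2,x_3)\mapsto(x_1,-x_2,-x_3)$, that is, a $\pi$-rotation across a horizontal line. For $\theta$, the identity $\theta^* G = e^{J_\theta}G$ combined with $\theta^* dh = dh$ shows the induced isometry is a screw motion about the vertical axis, with rotation angle $\im J_\theta$ and translation $\re\int_{z_0}^{\theta(z_0)} dh$. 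The dihedral relation $\rho\theta\rho^{-1} = \theta^{-1}$, applied to the already-established $\rho^*$ and $\theta^*$ formulas, forces $e^{2\overline{J_\theta}} = 1$, so $\re J_\theta = 0$ and the multiplier lies on the unit circle, confirming that the isometry is indeed a Euclidean screw motion.

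The main obstacle is evaluating these two integrals in the limit $\tau \to 0$. Using the limit forms $\omega_0$ and $dh_0$ from Lemmas \ref{omegalim} and \ref{dhlim} and integrating along a circular arc from $z_0$ to $\theta(z_0)$, each $k$-orbit among the $p_j$ or among the zeros of $dh_0$ contributes an integer multiple of $2\pi i$ to the integral (its $k$ logarithms telescope to $k\log e^{2\pi i/k} = 2\pi i$), while a central point $p_0 = 0$ with charge $\varepsilon_0$ contributes the fractional pieces $\frac{2\pi i(1+t\varepsilon_0)}{k}$ to $J_\theta$ and $\frac{2\pi \varepsilon_0}{k}$ to $\int dh$; any central zero of $dh_0$ contributes an integer multiple of $\frac{2\pi i}{k}$ that combines with the other terms into an integer total. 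The integer pieces correspond exactly to the indeterminacy in lifting $\theta$ from $\Sigma$ to $M_t$ by powers of $S_t$ (each contributing $(2\pi t, 2\pi)$ to the (angle, translation) pair); subtracting off the appropriate power of $S_t$ leaves the primitive lift with angle $\frac{2\pi(1+\varepsilon_0 t)}{k}$ and translation $\frac{2\varepsilon_0 \pi}{k}$, as claimed.
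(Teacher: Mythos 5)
Your overall strategy matches the paper's: show $\rho,\theta$ respect the neck identifications, compute their action on $(dh,\omega,G)$ by matching periods and residues, then read off the rigid motion from $\rho^*\mu=\overline\mu$ and $\theta^*\mu = e^{J_\theta}\mu$, evaluating $J_\theta$ and the $dh$-integral using the $D_k$-structure. Two points are worth flagging. First, a genuine imprecision: a $k$-orbit of the $p_j$ does \emph{not} contribute an integer multiple of $2\pi i$ to $\int_\gamma\omega$; since the residue of $\omega$ at (or rather the $A_j$-period around) each neck is $2\pi i(1+t\varepsilon_j)$, each such orbit contributes $2\pi i(1+t\varepsilon_j)$, which is only an integer multiple of $2\pi i$ at $t=0$. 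This matters because the resulting surplus $2\pi t\varepsilon_j$ in the angle must be absorbed into the $S_t$-lift indeterminacy — which you do invoke, but the bookkeeping as written (matching "integer pieces" of $J_\theta$ to powers of $S_t$) doesn't line up unless you track the paired $dh$-contribution $2\pi\varepsilon_j$ at the same time. The paper's own phrasing ("poles come in multiples of $k$, hence change the integral by a multiple of $2\pi ik$") shares this imprecision; the cleanest fix in either account is to record the orbit contribution as the pair $(2\pi(1+t\varepsilon_j),\,2\pi\varepsilon_j)$ and note this equals one full rotation plus $\varepsilon_j$ copies of $S_t$. Second, a smaller but real gap: you evaluate $J_\theta$ using the limit forms $\omega_0, dh_0$, which a priori only gives the answer at $\tau=0$ (where $t=0$); the paper instead applies the residue theorem to $\omega$ itself on $\{|z|=|z_0|\}$, using that $\int_\gamma\omega = \frac1k\int_{|z|=|z_0|}\omega$ exactly by $\theta$-invariance and that the $A$-period normalization fixes the residues/neck contributions exactly for all $\tau$ — so you should cite the exact residues of $\omega$, not of $\omega_0$, to get the formula on the whole family. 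Otherwise the argument is sound; the use of the dihedral relation $\rho\theta\rho^{-1}=\theta^{-1}$ to force $\re J_\theta=0$ is a nice addition the paper doesn't spell out (it gets this for free since its residue sum is manifestly real). Minor slip: $e^{2\overline{J_\theta}}=1$ should read $e^{2\re J_\theta}=1$.
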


\begin{proof}

A similar argument to the proof of lemma \ref{sigma} shows that these maps are well-defined.  Indeed, they both send each helicoidal neck to another and are compatible with identifications. 

Now a similar argument to the proof of lemmas \ref{dh} and \ref{Lambda} shows the following:

\begin{align*}
    \rho^*dh = -\overline{dh}, \quad \rho^*\mu = \overline{\mu},
\end{align*}
which signifies that $\rho$ is indeed a rotation about a horizontal line.

It also shows that 

\begin{align*}
    \theta^*dh = dh, \quad \theta^*\omega = \omega.
\end{align*}
We compute,

\begin{align*}
    G(\theta(z)) &= \Lambda i\ \exp\left(\int_{z_0}^{\theta(z_0)} + \int_{\theta(z_0)}^{\theta(z)}\omega \right)\\
    &=\exp\left(\int_{z_0}^{\theta(z_0)}\right)\cdot\Lambda i\ \exp\left( \int_{z_0}^{z}\theta^*\omega \right)\\
    &=J\cdot\Lambda i\ 
\exp\left( \int_{z_0}^{z} \omega \right) = J\cdot G(z),\\
\end{align*}
where $J = \exp\left(\int_{z_0}^{\theta(z_0)}\right)$.  To calculate $J$, we again use the symmetries of $\omega$.  Consider the circle of radius $|z_0|$ about the origin in $\C_1$ and let $\gamma$ be the arc on this circle from $z_0$ to $\theta(z_0)$.  Since the poles of $\omega$ (the zeros of $dh$) come in multiples of $k$ away from the origin, any possible poles in the region $0<|z|<|z_0|$ will only change the integral $\int_{|z|=|z_0|}\omega$ by a multiple of $2\pi i k$.  Thus, mod $2\pi i k$, we have:

\begin{align*}
    \int_{|z|=|z_0|}\omega
    &= \int_\gamma \omega + \int_{\theta(\gamma)} \omega + ... + \int_{\theta^{k-1}(\gamma)} \omega \\
    &=\int_\gamma \omega + \int_\gamma \theta^*\omega + ... + \int_\gamma (\theta^*)^{k-1}\omega \\
    &= k\int_\gamma \omega 
\end{align*}

Hence $\int_\gamma \omega = \frac{2\pi i}k \res_0 \omega$ mod $2\pi i$. We compute this residue in the different cases.

When $\varepsilon_0 = 0$, the origin is a point of $\Sigma$, and we have, 

\begin{align*}
    \int_{|z|=|z_0|}\omega = 2 \pi i \res_0 \omega = - 2 \pi i \textmd{ ord}_0(dh).
\end{align*}
But since $dh$ is symmetric with respect to $\theta$, which has order $k$, $\textmd{ord}_0(dh) = k - 1$, so mod $2\pi ik$, we have,

\begin{align*}
  \int_{|z|=|z_0|}\omega = -2 \pi i(k-1) =2 \pi i = 2\pi i(1 + \varepsilon_0 t).
\end{align*}  

If $\varepsilon_0 \neq 0$ and the configuration includes a central helicoid then (again, mod $2\pi i k$),

\begin{align*}
\int_{|z|=|z_0|}\omega = \varepsilon_0 \int_{A_0}\omega  = 2 \pi i(1 + \varepsilon_0 t).
\end{align*}

We conclude that, in all cases,

\begin{align*}
J = e^{2\pi i(1 + \varepsilon_0t)/k}.
\end{align*}

Therefore, $\theta^* \mu = e^{2\pi i(1 + \varepsilon_0t)/k} \mu$, from which we conclude that $\theta$ is a screw motion with the desired angle.  To calculate the translational part, we use the same idea:

\begin{align*}
   \re \int_{z_0}^{\theta(z_0)}dh &= \re\int_\gamma dh = \frac{1}{k} \re\int_{|z|=|z_0|}dh = \frac{2\pi \varepsilon_0}{k}.
\end{align*}

\end{proof}

\begin{remark}
All dihedral reflections being compostions of $\rho$ with powers of $\theta$ also correspond to reflections about horizontal lines.  When $\varepsilon_0 \neq 0$, these horizontal lines will have different heights than that of $\rho$.
\end{remark}

\begin{corollary}
$M_t$ also has a family of rotations about normal symmetry lines.
\end{corollary}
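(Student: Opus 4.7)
The plan is to mimic the arguments of Proposition \ref{syms} for the remaining involutions of $\Sigma$, producing isometries of $M_t$ whose axes of rotation are lines normal to the surface at isolated points. The candidate involutions come from composing $\rho$ and $\sigma$ with powers of $\theta$; the dihedral reflections $\theta^j\rho$ are already covered by the preceding remark, so the new input must involve the half-screw symmetry $\sigma$ from Lemma \ref{sigma}.

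First, I would check that the product $\tilde\rho := \sigma\rho$ is a holomorphic involution of $\Sigma$: it is a composition of two commuting antiholomorphic involutions (commuting because $\sigma$ swaps the two copies of $\hat{\mathbb C}$ with conjugation, while $\rho$ conjugates within each copy), and $(\sigma\rho)^2 = \sigma^2\rho^2 = S_t$ descends to the identity on $\Sigma$. A direct computation in the neck coordinates $v_jw_j=-r_j^2$ using the identification rules from the proof of Lemma \ref{sigma} shows that $\sigma\rho$ acts as $(v_j,w_j)\mapsto(w_j,v_j)$, with fixed points $v_j=w_j=\pm ir_j$ on each of the $n$ necks. By the $\theta$-equivariance, these $2n$ fixed points fall into $\theta$-orbits giving the desired family.

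Second, at each fixed point $p$ of $\tilde\rho$, the differential $d\tilde\rho_p$ acts on $T_p\Sigma$ as multiplication by $-1$, which is the characteristic behavior of an order-$2$ holomorphic action at a fixed point. Via the Weierstrass parametrization, the induced ambient isometry acts on the tangent plane to $M_t$ at $X(p)$ as rotation by $\pi$ while preserving the unit normal there, so its axis is the normal line to $M_t$ at $X(p)$ -- a normal symmetry line. Evaluating $G$ at the fixed points using Lemma \ref{Glim} together with the transformation law $(\sigma\rho)^*G$ derived from Lemma \ref{Lambda} pins down the direction of this normal line.

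The principal obstacle is that $(\sigma\rho)^2=S_t$ rather than identity on $M_t$, so the naive lift of $\tilde\rho$ is a screw motion whose square is $S_t$, not a genuine $\pi$-rotation. To obtain an honest rotation about the normal line through $X(p)$, one must compose with a suitable power of $S_t$ and a power of $\theta$ -- the admissible combination depends on parity of $k$ and on $\varepsilon_0$ -- and verify, by the same style of estimate used in the proof of Proposition \ref{omegaper}, that the adjusted isometry is indeed involutive and fixes $X(p)$. The bookkeeping of these compensating screw powers is the technical core of the proof; once it is carried out, applying $\theta^j$ for $j=0,\ldots,k-1$ distributes the fixed points into $\theta$-orbits and yields the full family of rotations claimed by the corollary.
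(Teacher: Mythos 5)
Your core idea matches the paper exactly: the paper's proof is a single sentence saying that composing $\rho$ with $\sigma$, both orientation-reversing on the surface, yields an orientation-preserving isometry, hence a rotation about a normal line at its fixed points. Your fixed-point analysis in the neck coordinates is a correct and useful supplement that the paper leaves implicit.

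However, the ``principal obstacle'' you identify is spurious, and the computation you use to produce it is wrong. You write $(\sigma\rho)^2 = \sigma^2\rho^2 = S_t$, which requires the ambient lifts $\tilde\sigma$ and $\tilde\rho$ to commute. They do not, even though $\sigma$ and $\rho$ commute on $\Sigma$; the lifts only commute up to a deck transformation. Concretely, $\tilde\sigma$ raises height by $\pi$ (proof of Lemma \ref{dhper}) and $\tilde\rho$ is a $\pi$-rotation about a horizontal line at some height $h_0$, so $\tilde\sigma\tilde\rho$ sends height $h\mapsto 2h_0-h+\pi$ while $\tilde\rho\tilde\sigma$ sends $h\mapsto 2h_0-h-\pi$; thus $\tilde\sigma\tilde\rho = S_t\,\tilde\rho\tilde\sigma$. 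The correct computation of the square is then
\begin{align*}
(\tilde\sigma\tilde\rho)^2 = \tilde\sigma(\tilde\rho\tilde\sigma)\tilde\rho = \tilde\sigma\,S_t^{-1}\tilde\sigma\tilde\rho\,\tilde\rho = \tilde\sigma S_t^{-1}\tilde\sigma = S_t^{-1}\tilde\sigma^2 = \mathrm{id},
\end{align*}
using $\tilde\rho^2=\mathrm{id}$, $\tilde\sigma^2 = S_t$, and that $\tilde\sigma$ commutes with $S_t$ (same vertical axis). So $\tilde\sigma\tilde\rho$ is already an honest involution, and the entire paragraph about compensating with powers of $S_t$ and $\theta$, and the ``bookkeeping'' you call the technical core, is unnecessary. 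Once one sees that the composition is orientation-preserving, has order two, and has a fixed point on the surface, the conclusion that it is a $\pi$-rotation about the normal line there is immediate; this is precisely the paper's one-line argument.
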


\begin{proof}
We obtain these by composing each $\rho$ symmetry with the screw motion $\sigma$.  Since they are both orientation-reversing, we obtain an orientation-preserving rotation.
\end{proof}

\begin{corollary}
If $k$ is even, then the surface is invariant under a vertical rotation of angle $\pi$.  If $\varepsilon_0 \neq 0$, then the fixed point set of this rotation is the central helicoidal axis, which is a vertical line.
\end{corollary}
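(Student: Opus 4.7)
The plan is to apply Proposition \ref{syms} to the element $\theta^{k/2}$, which is well-defined precisely because $k$ is even. First I would iterate the screw-motion formula of Proposition \ref{syms}: since $\theta$ induces a screw motion of angle $\frac{2\pi(1+\varepsilon_0 t)}{k}$ with translation $\frac{2\varepsilon_0\pi}{k}$, the $(k/2)$-fold composition $\theta^{k/2}$ induces a screw motion of angle $\pi(1+\varepsilon_0 t)$ and translation $\varepsilon_0\pi$, about the vertical axis through the origin (the origin being the unique fixed point of the dihedral action in each copy of $\hat{\C}$).

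When $\varepsilon_0 = 0$, the translational part vanishes and the rotational angle is exactly $\pi$, so $\theta^{k/2}$ realizes an honest rotation of $\R^3$ by $\pi$ about the vertical axis through the origin, whose fixed point set is that axis. This handles the assertion directly in the case with no central helicoid.

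When $\varepsilon_0 \neq 0$, the point $p_0 = 0$ carries nonzero charge, and the corresponding helicoidal neck produces a helicoid in $M_t$ whose axis is the vertical line through the origin. The induced symmetry $\theta^{k/2}$ is a screw motion about this same vertical axis, with rotational part $\pi$ up to the intrinsic twist $\varepsilon_0\pi t$, and its axis coincides with the central helicoidal axis. The main obstacle here is interpretive: since the translation $\varepsilon_0\pi$ is not an integer multiple of the $S_t$-period $2\pi$, no composition of $\theta^{k/2}$ with an integer power of $S_t$ produces a pure rotation of $\R^3$. The phrase ``rotation of angle $\pi$'' must therefore be read in the screw-motion sense already in play throughout Section~7, and the ``fixed point set of this rotation'' as the axis of the resulting screw motion, which is precisely the central helicoidal axis, a vertical line.
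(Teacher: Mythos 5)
Your computation of the screw-motion parameters of $\theta^{k/2}$ is correct, and the $\varepsilon_0 = 0$ case is handled exactly as the paper does. But the $\varepsilon_0 \neq 0$ case has a genuine gap. You assert that no composition of $\theta^{k/2}$ with a power of $S_t$ can cancel the translation $\varepsilon_0\pi$ (true, since $\varepsilon_0\pi$ is half a period of $S_t$), and you conclude that ``rotation of angle $\pi$'' must be re-read as a screw motion. This does not work: a screw motion with nonzero translation has \emph{no} fixed points, so the second sentence of the corollary (about the fixed point set) would be vacuous under your reading. The corollary really does mean a pure rotation.

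What you have overlooked is that the surface carries the additional symmetry $\sigma$, which is not a power of $S_t$ but satisfies $\sigma^2 = S_t$; it is a screw motion with translation $\pi$ and rotation $\pi t$. The paper composes $\sigma^{-\varepsilon_0} \circ \theta^{k/2}$: the translational parts cancel ($\varepsilon_0\pi - \varepsilon_0\pi = 0$) and the rotational parts give $\pi(1 + \varepsilon_0 t) - \varepsilon_0 \pi t = \pi$, a genuine vertical rotation by $\pi$. You should also supply the fixed-point argument, which does not follow merely from $p_0$ being the origin: the paper observes that $\sigma^{-\varepsilon_0} \circ \theta^{k/2}$ sends $z$ in one copy of $\hat{\C}$ to $-\overline z$ in the other, so fixed points must satisfy the identification $v_0(z)\,w_0(-\overline z) = -r_0^2$, i.e.\ $-|z|^2 = -r_0^2$, giving exactly the circle $|z| = r_0$, which is the central helicoidal axis.
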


\begin{proof}
We define this rotation in terms of $\sigma$ and $\theta$.  Now $\theta^{ k/2}$ translates by $\pi\varepsilon_0$ and rotates by $\pi(1 + \varepsilon_0 t)$.  If $\varepsilon_0 = 0$ then we have our desired rotation.  Otherwise, recall that $\sigma$ translates by $\pi$ and rotates by $\pi t$.  Therefore the map $\sigma^{-\varepsilon_0} \circ \theta^{k/2}$ is a rotation by $\pi$.  

We can determine fixed points by looking at $\Sigma$.  If $\varepsilon_0 = 0$, only the origins will be fixed.  If $\varepsilon_0 \neq 0$, this map will send a point $z$ in one copy of $\C$ to $-\overline z$ in the other.  The only possible fixed points must be near the origin and solve the equation,

\begin{align*}
    v_0(z) w_0(-\overline z) = -r_0^2, 
\end{align*}
that is the points of the central helicoidal axis, $\{|z| = r_0\}$.

\end{proof}

\subsection{A different quotient}

Before considering specific examples of dihedrally symmetric surfaces, it is useful to consider a more natural way of quotienting $M_t$, which is helpful for identifying our surfaces with ones already known.

Every helicoidal surface we can make with $k$-fold dihedral symmetry has a corresponding quotient with $2|N|$ helicoidal ends and the same $k$-fold symmetry.  It is obtained as follows.

Recall that our surface $\Sigma = M_t/S_t$ has two helicoidal ends each with winding number $1 + Nt$.  Consider now its $|N|$-fold unbranched cover $\widetilde{\Sigma} = M_t/S_t^{|N|}$.  Note that  $\widetilde \Sigma$ has $2|N|$ ends, each with winding number $1 + Nt$, and it has $S_t$ and $\theta$ as symmetries.  

Now consider  the screw motion $\hat S_t$ consisting of a rotation of $\frac{2|N|\pi t}k$ and translation by $\frac{2|N|\pi}k$.  One can easlily check that this is a symmetry of $\widetilde\Sigma$ by writing it as a composition of powers of $S_t$ and $\theta$. 

We call $\Sigma':= \widetilde{\Sigma}/\hat{S_t}$ the natural quotient corresponding to $M_t$.   $\hat{S_t}$ only identifies parts of the same end, so $\Sigma'$ has $2|N|$ ends, of winding number $\frac{1+Nt}k$, and $\widetilde{\Sigma}$ is a $k-$fold branched cover.  This makes $\Sigma'$ the smallest quotient of $M_t$ with $2|N|$ ends.  This makes it useful for classifying dihedrally symmetric helicoidal surfaces.  It is also the natural identification used to describe twisted versions of the well-known Scherck and Fischer-Koch surfaces. 

We can calculate the genera of $\widetilde{\Sigma}$ and $\Sigma'$ using Riemann-Hurwitz.  Recall that $\Sigma$ has genus $n-1$ and that the quotient of $\widetilde{\Sigma}$ by $S_t$ is $|N|-$fold unbranched.  So the genus $\widetilde{g}$ of $\widetilde{\Sigma}$ is given by

\begin{align*}
\widetilde g = |N|(n-2) + 1.
\end{align*}

Now the quotient of $\widetilde{\Sigma}$ by $\hat{S_t}$ is a $k-$fold branched cover, branched at each end, with index $k$.  Thus, again by Riemann-Hurwitz, we have:

\begin{align*}
\widetilde{g} = k(g'-1) + 1 + |N|(k-1),
\end{align*}

where $g'$ is the genus of $M_t'$. We thus obtain,

\begin{align}\label{genus}
    g' &= \frac{|N|(n - k - 1)}{k} + 1.
\end{align}

\begin{remark}
If $N = 0$, there is also a different quotient that corresponds to those of known examples, like \textit{CHM}.  It is the quotient with respect to the map $\Sigma' = M_t/(S_t\circ \theta)$, e=which has the same genus and number of ends as $\Sigma$.
\end{remark}

\subsection{Simplifying the period problem}

For dihedrally symmetric configurations, the period problems and balance equations are simplified. This translates to the balance equations, which also simplify because of the symmetry .

\begin{definition}
We say that a symmetric configuration has simple dihedral symmetry of order $k$ if its points lie on the axes of dihedral symmetry.  To be precise, it consists of $n_1$ points on the positive real axis, $n_2$ (possibly 0) points on the ray $Arg(z) = \frac{\pi}k$, and $n_0=0$ or $1$ point at the origin, and their images under $D_k$.
\end{definition}

\def\fw{3.5in}
\def\fww{2.1in}
\begin{figure}[H]
 \begin{center}
   \subfigure[$k=2, n_1 = 3, n_2 = 1, n_3 = 0$]{\includegraphics[width=\fw]{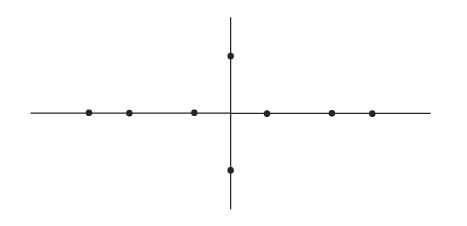}}
   \subfigure[$k = 6, n_1 = 2, n_2 = 0, n_3 = 1 $]{\includegraphics[width=\fww]{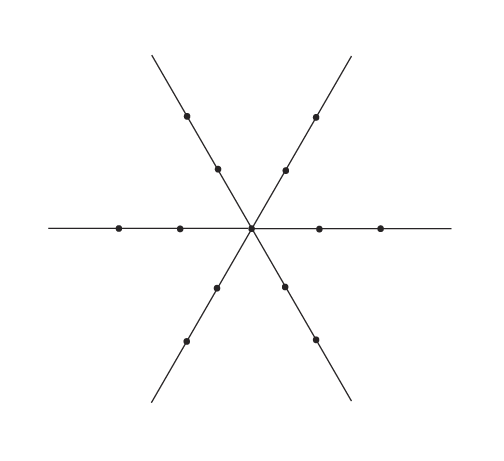}}
 \end{center}
 \caption{Two simply dihedral configurations}
 \label{fig:dihedral}
\end{figure}

\begin{remark}
A configuration with simple dihedral symmetry consists of $k(n_1 + n_2) + n_0$ points.
\end{remark}

In a such a configuration, we determine the locations of points by their distances from the origin, making up $n_1 + n_2$ real parameters, which we call $p_{11}<  ... < p_{1n_1}$ and $ p_{21} <  ... <  p_{2n_2}$ (the possible point at the origin is of course fixed).  There are also $n_1 + n_2 + n_0$ real parameters $r_{ij}$, $i= 1, 2$, $j= 1,..., n_i$, and possibly $r_0$, that correspond to the radii of identification.

Recall that $r_{11}$ is determined by the condition $\lim_{\tau \to 0} \Lambda^2 t = \frac4{c_0^2}$ and hence will not be a free parameter near the limit. We thus expect the period problem to reduce to $2(n_1 + n_2) + n_0 - 1$ real equations, one for each remaining parameter.  This is indeed the case: 

\begin{proposition}
For a configuration with simple dihedral symmetry, the period problem consists of $n_1 + n_2 + n_0 - 1$ $B$-periods of $\omega$ and $n_1 + n_2$ real $C$-periods. 
\end{proposition}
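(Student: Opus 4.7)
The plan is to use the dihedral symmetries $\theta$ and $\rho$ from Proposition \ref{syms} to partition the relevant cycles into $D_k$-orbits and then exploit the transformation properties of $\omega$ and $\mu$ under these symmetries to collapse each complex period condition into a single real equation. The reduction proceeds in two stages: first, $\theta$-equivariance reduces the problem from $n - 1$ complex conditions to one condition per orbit; second, the reflection $\rho$ (or $\theta \comp \rho$ for representatives on the $\pi/k$-axis) forces each surviving condition to be real.

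For the $B$-periods of $\omega$, since $\theta^*\omega = \omega$, the periods $\int_{B_j}\omega$ and $\int_{B_{j'}}\omega$ over points $p_j, p_{j'}$ in the same $\theta$-orbit agree modulo $2\pi i \Z$. The $n$ points of the configuration fall into $n_1 + n_2 + n_0$ such orbits: one per axis representative $p_{1j}$ or $p_{2j}$, plus the origin if present. Discarding the orbit of the basepoint $p_1$ leaves $n_1 + n_2 + n_0 - 1$ independent conditions; by Proposition \ref{omegaper}, the imaginary parts are automatic modulo $2\pi$, so each contributes a single real equation. This matches the $n_1 + n_2 + n_0 - 1$ free radii $r_{ij}$ available after $r_{11}$ has been fixed by the normalization $\lim_{\tau \to 0} \Lambda^2 t = 4/|c_0|^2$.

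For the $C$-periods of $\mu$, the same $\theta$-reduction leaves complex conditions indexed by orbit representatives. Reflection then collapses each such condition to a real equation. For a real-axis orbit representative $p_{1j}$, the cycle $C_{1j}$ can be chosen $\rho$-symmetric; combined with $\rho^*\mu = \overline{\mu}$, this forces $\int_{C_{1j}} \mu \in \R$, one real equation. For a $\pi/k$-axis orbit representative $p_{2j}$, the composition $\theta \comp \rho$ fixes $p_{2j}$ and the induced action on $\mu$ forces the period to lie on a fixed real line, again one real equation. For the origin orbit (when present), the full $D_k$ stabilizer forces the force $F_0$ to vanish automatically, and through the relation $\mathcal{F}_j = F_1 - F_j$ from the proof of Proposition \ref{balance}, the condition $F_1 = F_0 = 0$ contributes the additional real equation $F_1 = 0$ that replaces the translation degree of freedom absent in the symmetric setting. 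The total is $n_1 + n_2$ real equations, matching the $n_1 + n_2$ free radial positions $p_{1j}, p_{2j}$.

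The main obstacle is bookkeeping: one must choose the cycles $B_j$ and $C_j$ equivariantly with respect to both $\theta$ and $\rho$, and verify that residues picked up when deforming cycles to avoid poles of $\omega$ do not disturb the equivariance modulo $2\pi i$. A subtler point is confirming that the $C$-period count lands on $n_1 + n_2$ in both cases $n_0 = 0$ and $n_0 = 1$: when there is no central helicoid, the missing real equation must instead come from the $\pi/k$-axis representatives, whose forces lie off the real line and so are forced by $F_1 = F_{2j}$ to vanish together with $F_1$.
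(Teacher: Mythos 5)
Your plan is the same two-stage reduction the paper carries out: use $\theta$-equivariance to collapse the $n-1$ period conditions down to one per $D_k$-orbit, then use the reflections $\rho$ and $\theta\circ\rho$ to confine each surviving horizontal period to a real line. However, two details are off. First, in the $C$-period analysis you conclude $\int_{C_{1j}}\mu \in \R$, but $\rho$ (conjugation in each sheet) reverses orientation, so a cycle chosen symmetric about the real axis satisfies $\rho(C_{1j}) \simeq C_{1j}^{-1}$, not $C_{1j}$. Combined with $\rho^*\mu = \overline{\mu}$ this gives $\overline{\int_{C_{1j}}\mu} = -\int_{C_{1j}}\mu$, so the period is \emph{pure imaginary}, which is what the paper asserts (the table flags only the imaginary part of $C_{1j}$ and $C_0$ as nontrivial). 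The same orientation-reversal affects the $\theta\circ\rho$-argument for the $\pi/k$-axis representatives. The final count of one real equation per orbit is unaffected, but the claim about which part vanishes automatically is reversed.

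Second, the $B$-period reduction is too quick. You assert that $\theta^*\omega = \omega$ makes the periods over $\theta$-orbit-related $B_j$'s agree modulo $2\pi i\Z$, but $\theta$ also moves the basepoint $p_1$: the curve $\theta(B_j)$ runs from $\theta(p_1)$ to $\theta(p_j)$, so to compare it with $B_{j'}$ (which starts at $p_1$) you must prepend a connector loop from $p_1$ to $\theta(p_1)$. That connector is the paper's $B_{11}$, and its period must be shown to vanish independently — the paper does this by observing that $B_{11} + \theta(B_{11}) + \cdots + \theta^{k-1}(B_{11})$ is homologous to a sum of $A$-cycles, whose $\omega$-periods are prescribed multiples of $2\pi i$. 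Without this step, simply "discarding the orbit of $p_1$" is not justified, since that orbit contributes $k-1$ genuine $B$-cycles whose periods are not a priori trivial.
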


\begin{proof}
Note that there are a total of $2(n-1) = 2k(n_1 + n_2) + n_0 - 2$ paths over which to integrate.  To reduce the number of periods we write all period paths as compositions of a small number of curves and their images under symmetries.  This will also help us to reduce the $C$-periods to real equations. 

Recall the definitions of the curves $B_j$ and $C_j$ defined in sections \ref{Hom} and \ref{Per}.  These were all defined in terms of two points $p_1$ and $p_j$.  Consider the following subset of curves:
\begin{itemize}
    \item $B_{1j}$, corresponding to the points $p_{11}$ and $p_{1j}$, where $j \geq 2$,
    \item $B_{2j}$, corresponding to the points $p_{11}$ and $p_{2j}e^{\pi i/k}$, where $j \geq 1$,
    \item $B_{11}$, corresponding to the points $p_{11}$ and $p_{11}e^{2\pi i/k}$,
    \item$B_{0}$, corresponding to the points $p_{11}$ and the origin, if it is in the configuration,
    \item the corresponding $C$-curves, using the same indices.
\end{itemize}

The $B$-periods are already real, and it is a simple matter to show that all $B$ paths are compositions of the curves $B_{11}, ..., B_{1n_1}, B_{21}, ..., B_{2n_2}$, $B_0$ (if there is a central helicoid), and their images under dihedral symmetries.  Moreover, the period over $B_{11}$ always vanishes for a simple dihedral configuration.  Indeed, the sum $B_{11}+ \sigma(B_{11}) + ... + \sigma^{k-1}(B_{11})$ is homologous to a sum of $A$-curvew, which shows the period is closed.

Therefore, if the periods over the remaining $n_1 + n_2 + n_0 - 1$ curves vanish then all $B$-periods are closed.

The same idea as above applies to the $C$-periods:  if the horizontal periods over the curves $C_{11}, ..., C_{1n_1}, C_{21}, ..., C_{2n_2}, C_0$ vanish, then all do.  The set of $C$-periods that need to vanish for the period problem to be solved depends on the values of $n_2$ and $n_0$, as shown in the table below:

\begin{table}
\begin{center}
\begin{tabular}{ |c|c|c|c|c|c|c| } 
 \hline
 $n_0$ & $n_2$ & $C_{11}$ & $C_{1j}$, $j \geq 2$ & $C_{21}$ & $C_{2j}$ & $C_{0}$ \\
 \hline\hline
 0 & 0 & Im & Im & $\times$ & $\times$  & $\times$ \\ 
 \hline
 0 & $>0$ & $\times$ & Im & Re, Im & Re & $\times$ \\ \hline
 1 & 0 & $\times$ & Im & $\times$ & $\times$ & Im \\ \hline
 1 & $>0$ & $\times$ & Im & Re & Re & Im \\ 
 \hline
\end{tabular}
\end{center}
\end{table}

Here, an $\times$ means that the period does already vanishes or will vanish when all others do.  We briefly explain these simplifications:

\begin{itemize}
    \item The same argument as in \cite{tw1} shows that all periods $C_{1j}$, for $j \geq 2$, and $C_0$ are pure imaginary, so only the imaginary part needs to vanish. 
    \item Consider the curves $\hat C_j$, $j = 2, ..., n_2$ corresponding to points $p_{21}$ and $p_{2j}$. Again using dihedral symmetries, we can show that
    all periods over these curves are real multiples of $e^{\pi i(1 + \varepsilon_0 t)/k}i$.  Since all $C_{2j}$ curves are compositions of the curve $C_{21}$ and $\hat C_j$, we conclude that if the period over $C_{21}$ vanishes, then we need only show that the real part of those over $C_{2j}$ vanish.
    \item Since 0 lies on the same line as the $p_{2}$ points, if the $C_0$ period is 0, then the $C_{21}$ period is of the same form as the other $C_{2j}$, and it suffice for the real part to vanish.
    \item Finally, if $n_2$ and $n_0$ are not both 0, then by symmetry, the vanishing of the $C_{21}$ or $C_0$ periods implies the $C_{11}$ periods also vanish.  If $n_2=n_0=0$, then dihedral symmetry shows the period is a real multiple of $e^{\pi i/k}$.  This implies that if its imaginary part vanishes, so does the whole period.
\end{itemize}
\end{proof}

The periods of $\omega$ over the $B$-curves determine the radii $r_{ij}$ in terms of $r_{11}$, as in the general case.  However, the horizontal periods are of particular interest to us.  As $\tau \to 0$, these period conditions become a set of $n_1 + n_2$ real balance equations, which correspond to each of the $p$ variables.  

We obtain these from the general equations of definition \ref{config}.  Since balance equations are symmetric under rotation, equations corresponding to symmetric points are equivalent.  The symmetry also implies that $F_0 = 0$ in any dihedral configuration, and thus we have $n_1 + n_2$ real equations.

The points of the configuration are of the form 
\begin{align*}
p_{1j}\varphi^m, j = 1, ..., n_1, \qquad p_{2j}e^{\pi i /k}\varphi^m, j = 1, ..., n_2, \qquad p_0 = 0,
\end{align*}
where $\varphi = e^{2\pi i/k}$ and $m = 0, ..., k-1$.  Thus the balance equations simplify to the following sums vanishing:

\begin{align*}
F_{1j} &= p_{1j} + \sum_{m=1}^{k-1} \frac{\varepsilon_{1j}}{p_{1j}-p_{1j}\varphi^m} + \sum_{l \neq j} \sum_{m=1}^{k} \frac{\varepsilon_{1l}}{p_{1j} - p_{1l}\varphi^m} + \sum_{l=1}^{n_2} \sum_{m=1}^k \frac{\varepsilon_{2l}}{p_{1j} - p_{2l}e^{\pi i/k}\varphi^m}
 + \frac{\varepsilon_0}{p_{1j}}\\
 &=p_{1j} +  \frac{(k-1)\varepsilon_{1j}+ 2\varepsilon_0}{2p_{1j}} + \sum_{l \neq j} \frac{\varepsilon_{1l}kp_{1j}^{k-1}}{p_{1j}^k - p_{1l}^k} + \sum_{l=1}^{n_2} \frac{\varepsilon_{2l}kp_{1j}^{k-1}}{p_{1j}^k + p_{2l}^k}
 \\
F_{2j}&= p_{2j} +  \frac{(k-1)\varepsilon_{2j}+ 2\varepsilon_0}{2p_{2j}} + \sum_{l \neq j} \frac{\varepsilon_{2l}kp_{2j}^{k-1}}{p_{2j}^k - p_{2l}^k} + \sum_{l=1}^{n_1} \frac{\varepsilon_{1l}kp_{2j}^{k-1}}{p_{2j}^k + p_{1l}^k},
\end{align*}
where we multiply $F_{2j}$ by $e^{\pi i/k}$ to make it real.  The simplifications come from the following elementary relations:

\begin{align*}
    \sum_{m=1}^{k-1} \frac{1}{1- \varphi^m} &= \frac{k-1}2\\
   \sum_{m=1}^k\frac{1}{x - \varphi^m y} &= \frac {kx^{k-1}}{x^k-y^k}
\end{align*}

The non-degeneracy condition is that the matrix $\left(\frac{\partial F_{ij}}{\partial p_{i'j'}}\right)$ is invertible. In practice, it is simpler to check non-degeneracy of the matrix.

\begin{align*}
    \left(p_{i'j'}\frac {\partial (p_{ij} F_{ij})} {\partial p_{i'j'}} \right).
\end{align*}

Note first that if a configuration is balanced and this matrix is non-degenerate, then the configuration is non-degenerate.  This matrix is preferable because its terms are relatively simple:

\begin{align*}
   p_{ij}\frac{\partial(p_{ij}F_{ij})}{\partial p_{ij}} &= 2p_{ij}^2 + \sum_{(i',j') \neq (i,j)} \frac{\pm \varepsilon_{i'j'}  k^2 p_{ij}^k p_{i'j'}^k} {(p_{ij}^k \pm p_{i'j'}^k)^2},\\
   p_{i'j'} \frac{\partial(p_{ij} F_{ij})} {\partial p_{i'j'}} &=\frac{\mp \varepsilon_{i'j'} k^2 p_{ij}^k p_{i'j'}^k} {(p_{ij}^k \pm p_{i'j'}^k)^2},
\end{align*}
where the $\pm$ symbol reflects whether $i = i'$ or not.

\subsection{Some solutions}
We first consider two simple cases, which have the same proof.

\begin{proposition}[\textbf{Karcher-Scherk case}]
\label{KS}
Given $k \geq 2$, consider the configuration of $k$ points given by $p_{11} = \sqrt{\frac{k-1}2}$ and $\varepsilon_{11} = -1$.  This configuration is balanced and non-degenerate.
\end{proposition}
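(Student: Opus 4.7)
The plan is to verify the two conditions (balanced and non-degenerate) by direct substitution into the simplified dihedral balance equations just derived. The configuration here is of the simplest possible dihedral type: $n_1 = 1$, $n_2 = 0$, $n_0 = 0$, and a single charge $\varepsilon_{11} = -1$. So there is exactly one real free parameter $p_{11}$ and, by the reduction in the previous subsection, exactly one real balance equation $F_{11} = 0$ to solve.

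First I would plug these values into the formula for $F_{1j}$. The cross-orbit sum $\sum_{l \neq j}$ is empty (since $n_1 = 1$), the $n_2$ sum is empty, and the $\varepsilon_0$ contribution vanishes, leaving only the self-interaction term:
$$F_{11} = p_{11} + \frac{(k-1)\varepsilon_{11}}{2 p_{11}} = p_{11} - \frac{k-1}{2 p_{11}}.$$
Setting this to zero yields $p_{11}^2 = (k-1)/2$, which is exactly the stated value $p_{11} = \sqrt{(k-1)/2}$. So the configuration is balanced.

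Next, I would verify non-degeneracy using the simpler matrix $\bigl(p_{i'j'} \partial(p_{ij}F_{ij})/\partial p_{i'j'}\bigr)$ mentioned just before the proposition, which in this case is $1 \times 1$. Since $p_{11} F_{11} = p_{11}^2 - (k-1)/2$,
$$p_{11} \frac{\partial(p_{11} F_{11})}{\partial p_{11}} = 2 p_{11}^2 = k-1,$$
which is strictly positive for every $k \geq 2$. By the remark preceding the proposition, the non-degeneracy of this reduced matrix forces non-degeneracy of the full $2k \times 2k$ configuration matrix, and we are done.

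There is essentially no obstacle in this proof: dihedral symmetry collapses what would naively be a $2k$-dimensional problem to a single-variable one, and both the balance equation and the Jacobian become one-line computations. This proposition serves mainly as the sanity check that the classical Karcher--Scherk saddle tower configurations sit inside our framework and satisfy the hypotheses of Theorem \ref{main}.
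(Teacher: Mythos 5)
Your proof is correct and follows essentially the same approach as the paper: the paper's combined proof of Propositions \ref{KS} and \ref{FK} sets $\varepsilon_0 = 0$ for this case, yielding $F_{11} = p_{11} - \tfrac{k-1}{2p_{11}}$ and $\frac{\partial F_{11}}{\partial p_{11}} = 1 + \frac{k-1}{2p_{11}^2} > 0$. The only cosmetic difference is that you check non-degeneracy via the reduced matrix $p_{11}\frac{\partial(p_{11}F_{11})}{\partial p_{11}} = 2p_{11}^2 = k-1$ rather than differentiating $F_{11}$ directly, but these differ by the positive factor $p_{11}^2$ at a balanced configuration, so the criteria are equivalent.
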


\begin{remark}
The quotient $\Sigma'$ has genus 0 and $2k$ ends, and we can verify that this surface is indeed a twisted Scherk tower using the straight lines and the Plateau problem.
\end{remark}

\begin{proposition}[\textbf{Twisted Fischer-Koch surfaces}]
\label{FK}
Given $k \geq 2$, consider the configuration of $k+1$ points given by $p_{11} = \sqrt{\frac{k+1}2}$, $p_0 = 0$, and $\varepsilon_{11} = \varepsilon_0 = -1$.  Consider also, for $k\geq 4$, the configuration of $k+1$ points given by $p_{11} = \sqrt{\frac{k-3}2} $, $p_0 = 0$, $\varepsilon_{11} = -1$, and $\varepsilon_0 = 1$.  These configurations are balanced and non-degenerate.
\end{proposition}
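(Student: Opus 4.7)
The plan is to specialize the simplified balance equations derived in the preceding subsection to a configuration with $n_1 = 1$, $n_2 = 0$, and a single central point. In this regime the formula for $F_{1j}$ collapses dramatically: the sum over $l \neq j$ is empty because there is only one orbit of non-central points, and the sum over $n_2$ is empty. After this trivial reduction the only balance equation is
\begin{align*}
F_{11} = p_{11} + \frac{(k-1)\varepsilon_{11} + 2\varepsilon_0}{2 p_{11}} = 0,
\end{align*}
which yields $p_{11}^2 = -\frac{1}{2}\bigl((k-1)\varepsilon_{11} + 2\varepsilon_0\bigr)$. Substituting $\varepsilon_{11} = \varepsilon_0 = -1$ gives $p_{11}^2 = \frac{k+1}{2}$, while $\varepsilon_{11} = -1$, $\varepsilon_0 = +1$ gives $p_{11}^2 = \frac{k-3}{2}$. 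The second case requires $k \geq 4$ both to keep $p_{11}$ real and positive and to ensure the central point remains distinct from $p_{11}$, which is why the proposition imposes that constraint.

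For non-degeneracy, I will verify the condition on the alternative matrix $\left(p_{i'j'}\,\partial(p_{ij}F_{ij})/\partial p_{i'j'}\right)$ described at the end of the previous subsection, whose invertibility implies non-degeneracy of the original Jacobian. Since the configuration has only one free distance parameter $p_{11}$, this is a $1\times 1$ matrix. Writing
\begin{align*}
p_{11}F_{11} = p_{11}^2 + \tfrac{1}{2}\bigl((k-1)\varepsilon_{11} + 2\varepsilon_0\bigr)
\end{align*}
as a quadratic in $p_{11}$, differentiation gives $\partial(p_{11}F_{11})/\partial p_{11} = 2 p_{11}$, so the single matrix entry is $2 p_{11}^2$. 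In the two cases this equals $k+1 > 0$ and $k-3 > 0$ respectively, so the matrix is invertible and the configuration is non-degenerate.

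There is essentially no substantive obstacle here; once the general symmetry-reduced balance equations are granted, everything reduces to solving one quadratic and differentiating another. The only subtle point worth flagging is that $k \geq 4$ is strictly necessary in the second family: at $k = 3$ one would have $p_{11} = 0$, colliding with the central helicoid and violating the definition of a configuration. The simplicity of the proof reflects the fact that these two families are the minimal dihedrally symmetric balanced configurations of the form $n_1 = 1$, $n_0 = 1$, with the central charge chosen compatibly; the richer Fischer--Koch-like examples arise only once additional orbits are admitted.
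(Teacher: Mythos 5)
Your proposal is correct and follows essentially the same route as the paper: specialize the dihedral balance equation to $F_{11} = p_{11} + \tfrac{(k-1)\varepsilon_{11}+2\varepsilon_0}{2p_{11}} = 0$, solve the resulting quadratic, and check that the $1\times 1$ Jacobian is nonzero. The only cosmetic difference is that you verify non-degeneracy via the auxiliary entry $p_{11}\,\partial(p_{11}F_{11})/\partial p_{11} = 2p_{11}^2$, whereas the paper computes $\partial F_{11}/\partial p_{11} = 1 + \tfrac{k-1-2\varepsilon_0}{2p_{11}^2}$ directly; on a balanced configuration these differ by the positive factor $p_{11}^2$, so they are equivalent.
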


The corresponding SMIMS all have genus 1 and $2(k+1)$ or $2(k+1)$ ends in the quotient.

\begin{remark}[$\varepsilon_0=-1$]
If $k$ is even, these indeed correspond to twisted versions of known Fischer-Koch surfaces.  Indeed, they solve the same Plateau problem, as can be seen by considering the horizontal and vertical straight lines fixed by the symmetries described in \ref{symmetries}. These will have $2(k + 1)$ ends.

If $k$ is odd, however, we obtain surfaces similar to the known Fischer-Koch but which do not have a central straight line.  These are the first known embedded FK surfaces with $4m$ ends ($m \geq 2)$, and the absence of a vertical line explains why Plateau methods have failed to produce these surfaces.  Numerical evidence suggests that these can be untwisted until they have vertical Scherk ends.
\end{remark}

\def\fw{1.8in}
\begin{figure}[H]
 \begin{center}
   \subfigure[Near the imit]{\includegraphics[width=\fw]{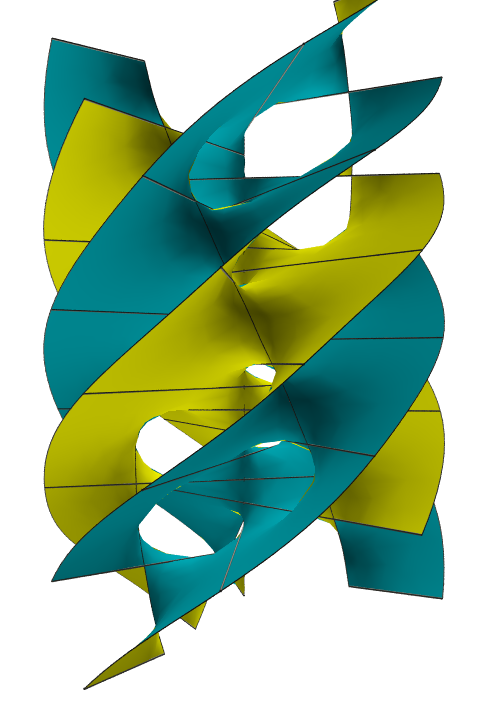}}
   \hspace{1in}
   \subfigure[Numerically untwisted]{\includegraphics[width=\fw]{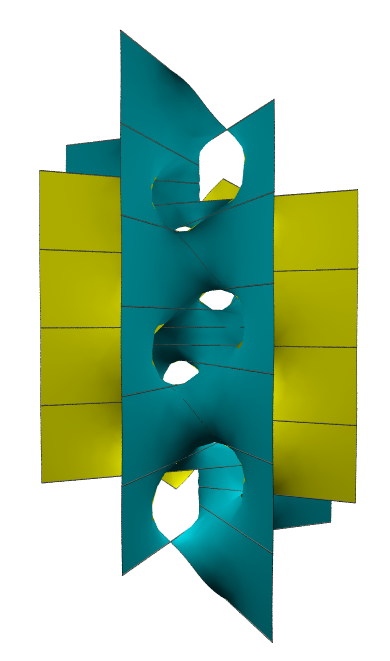}}
 \end{center}
 \caption{Fischer-Koch surfaces with 8 ends}
 \label{fig:FK3}
\end{figure}

\begin{remark}[$\varepsilon_0 = 1$]
We will see in the proof why we need $k \geq 4$ in this case, but this restriction is meaningful for the surfaces.  

When $k$ is even, these configurations correspond to the other parking garage limit of FK surfaces, albeit mirrored and rescaled.  Indeed, if $k$ is the order of symmetry for $\varepsilon_0 = -1$, then the other limit will have a symmetry of order $k + 2$.  We can again use straight lines to show  that this is the case.  

If $k$ is odd, numerical evidence suggests that the same is true, although a proof using Plateau solutions would be difficult.
\end{remark}

\def\fw{1.8in}
\def\fww{2.1in}
\begin{figure}[H]
 \begin{center}
   \subfigure[$\varepsilon_0 = -1$]{\includegraphics[width=\fw]{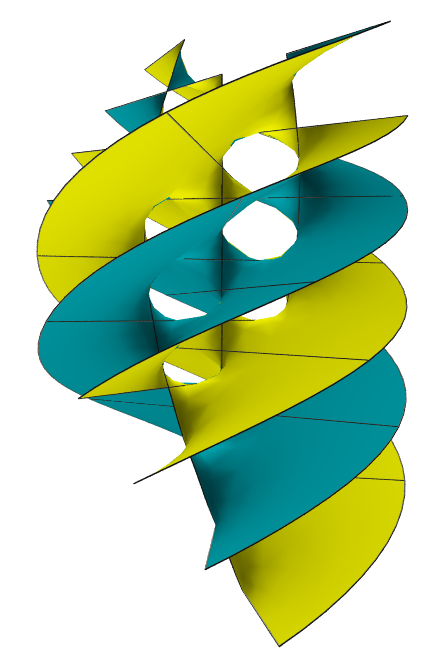}}
   \hspace{1in}
   \subfigure[$\varepsilon_0 = 1$]{\includegraphics[width=\fww]{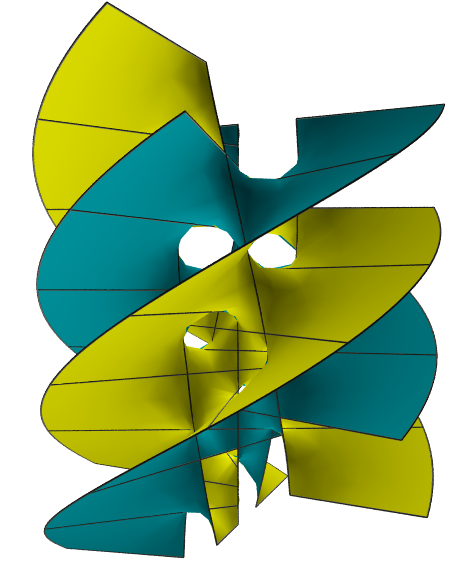}}
 \end{center}
 \caption{The two limits of Fischer-Koch with 6 ends}
 \label{fig:FK3}
\end{figure}

\begin{proof}
We need only solve the following:

\begin{align*}
0 = F_{11} =  p_{11} + \frac{\varepsilon_0}{p_{11}} - \frac{k-1}{2p_{11}}. 
\end{align*}

This equation is solved by $p_{11} = \sqrt{\frac{k-1 - 2\varepsilon_0}2}$, when this value exists and is positive.  We thus have solutions for any $k \geq 2$ if $\varepsilon_0 = 0$ or $-1$ and for $k \geq 4$ if $\varepsilon_0 = 1$.

For the configuration to be non-degenerate, we need only $\frac{\partial F_{11}}{\partial p_{11}} \neq 0$.  But, we have,

\begin{align*}
\frac{\partial F_{11}}{\partial p_{11}} = 1 + \frac{k-1 - 2\varepsilon_0}{2p_{11}^2} >0
\end{align*}
\end{proof}

\begin{remark}
It turns out that the only dihedrally symmetric configurations consisting of points with the same modulus and possibly one at the origin are the three above and the $(+ - +)$ case corresponding to the genus 1 helicoid discussed in \cite{tw1}.
\end{remark}

The following is a generalization of a resulf from \cite{tw1} about configurations symmetric configurations with only negative charges (see figure \ref{fig:dihedral} (b)).
\begin{proposition}
\label{Hermite}
For any $n \geq 1$ and any $k \geq 2$, there exists a simply dihedrally symmetric configuration with $\varepsilon_{1j} = -1$, $j = 1, ..., n$ and $\varepsilon_0 = 0$ or $-1$, which is balanced and non-degenerate.  The same is true for $\varepsilon_0= 1$ as long as $k \geq 4$.
\end{proposition}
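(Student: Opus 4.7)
The plan is a variational argument generalizing the $k = 1$ (Hermite) case of \cite{tw1}. Since the configuration is $D_k$-invariant and the forces are $D_k$-equivariant, balance reduces to $F_{1j} = 0$ for $j = 1,\ldots,n$ (with $F_0 = 0$ automatic). A direct check using the identities $\sum_{m=0}^{k-1}(x - \varphi^m y)^{-1} = kx^{k-1}/(x^k - y^k)$ and $\sum_{m=1}^{k-1}(1-\varphi^m)^{-1} = (k-1)/2$ shows that these $n$ equations are precisely the critical-point equations of the effective potential
\begin{align*}
V(p_{11}, \ldots, p_{1n}) = \frac{1}{2}\sum_{j=1}^n p_{1j}^2 - \frac{k-1-2\varepsilon_0}{2}\sum_{j=1}^n \log p_{1j} - \sum_{1 \leq j < l \leq n}\log\bigl(p_{1l}^k - p_{1j}^k\bigr),
\end{align*}
defined on the open cell $\Omega = \{0 < p_{11} < \cdots < p_{1n}\} \subset \R^n$, so any critical point of $V$ in $\Omega$ yields a balanced configuration.

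Next I would show $V$ is proper on $\Omega$. Three boundary regimes occur: as some $p_{1j} \to \infty$, the quadratic term dominates; as adjacent $p_{1j}, p_{1,j+1}$ collide, the term $-\log(p_{1,j+1}^k - p_{1j}^k) \to +\infty$; and as $p_{11} \to 0^+$, the term $-\tfrac{k-1-2\varepsilon_0}{2}\log p_{11}$ diverges to $+\infty$ exactly when $k - 1 - 2\varepsilon_0 > 0$, i.e.\ $k \geq 2$ for $\varepsilon_0 \in \{0,-1\}$ and $k \geq 4$ for $\varepsilon_0 = 1$, precisely matching the hypotheses. When several coordinates approach $0$ simultaneously the individual $-\log p_{1j}$ terms add constructively and the collision terms only reinforce the divergence, so no cancellation occurs. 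Hence $V$ attains a minimum $p^\ast \in \Omega$, producing the balanced configuration.

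Non-degeneracy then follows from strict convexity of $V$. The quadratic piece contributes the identity to the Hessian, the logarithmic piece is diagonal with strictly positive entries $\tfrac{k-1-2\varepsilon_0}{2 p_{1j}^2}$, and for $f(a,b) = -\log(b^k - a^k)$ a direct computation yields $f_{aa}, f_{bb} > 0$ together with
\begin{align*}
f_{aa} f_{bb} - f_{ab}^2 = \frac{k^2(k-1)\,a^{k-2}b^{k-2}}{(b^k - a^k)^2} > 0
\end{align*}
for $0 < a < b$ and $k \geq 2$, so each pairwise term is strictly convex. Summing, the Hessian $H$ of $V$ at $p^\ast$ is positive definite. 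Since $F_{1j} = 0$ there, the $(j,l)$-entry of the reduced test matrix defined just above the proposition satisfies
\begin{align*}
p_{1l}\,\frac{\partial(p_{1j} F_{1j})}{\partial p_{1l}}\bigg|_{p^\ast} = p_{1j}\, p_{1l}\, H_{jl},
\end{align*}
so the matrix equals $\mathrm{diag}(p_{1j}) \cdot H \cdot \mathrm{diag}(p_{1l})$, positive definite and hence invertible; by the sufficient criterion from the paper the configuration is non-degenerate. The main delicacy in this plan is verifying properness uniformly across all degeneration patterns at $\partial \Omega$, in particular when several $p_{1j}$ approach $0$ at different rates; but the stated inequalities force the dominant divergence to be $+\infty$ in every subcase, so the argument is ultimately routine once the sign bookkeeping is carried out.
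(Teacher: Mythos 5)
Your argument is correct, but it follows a genuinely different route from the paper's. The paper proves existence by a chain of intermediate-value-theorem adjustments, fixing each $p_{1j}$ in turn so that $F_{1j}$ vanishes while assuming the previously arranged equations continue to hold, and it proves non-degeneracy by showing the test matrix $\left(p_{i'j'}\,\partial(p_{ij}F_{ij})/\partial p_{i'j'}\right)$ is strictly diagonally dominant, with residual $2p_{1j}^2>0$ in each row. You instead observe that the reduced balance equations $F_{1j}=0$ are exactly $\nabla V=0$ for the electrostatic potential $V$ on the ordered cell $\Omega$, prove $V$ is proper on $\Omega$, take a minimizer, and verify strict convexity so that the Hessian $H$ is positive definite everywhere. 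This is the Stieltjes-style electrostatic argument which, for $k=2$, gives precisely the Hermite-zeros characterization that the paper alludes to in the remark following the proposition; your version generalizes it uniformly in $k$ and $\varepsilon_0$. The variational route buys two things the paper's does not explicitly obtain: uniqueness of the balanced configuration for each admissible $(n,k,\varepsilon_0)$ (strict convexity on a convex domain), and an existence proof that sidesteps the iterated IVT step, which as written in the paper is somewhat informal and would require a compactness or degree argument to make fully precise. Your identity $p_{1l}\,\partial(p_{1j}F_{1j})/\partial p_{1l}\big|_{p^\ast}=p_{1j}p_{1l}H_{jl}$ (valid because $F_{1j}(p^\ast)=0$) then exhibits the test matrix as the congruence $DHD$ of a positive-definite matrix with $D=\operatorname{diag}(p_{1j})$, a sharper conclusion than diagonal dominance, though both imply invertibility. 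Your determinant computation $f_{aa}f_{bb}-f_{ab}^2=k^2(k-1)a^{k-2}b^{k-2}/(b^k-a^k)^2$ for $f=-\log(b^k-a^k)$ checks out, and the properness verification is sound: each term of $V$ is bounded below on any region with $\max_j p_{1j}$ bounded, the cross terms $-\log(p_{1l}^k-p_{1j}^k)$ can only tend to $-\infty$ when some $p_{1j}\to\infty$ where the quadratic dominates, and the condition $k-1-2\varepsilon_0>0$ is exactly what makes the origin repulsive, matching the hypotheses on $(k,\varepsilon_0)$.
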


\begin{proof}
Let $p_j = p_{1j}$ for ease of notation.  The balance equations become:

\begin{align*}
F_j = p_j - \frac{k-1 - 2\varepsilon_0}{2p_j} - \sum_{l \neq j}\frac{k p_j^{k-1}}{p_j^k - p_l^k}.
\end{align*}

As we vary each $p_j$ individually, observe that as $p_j \to p_{j+1}$, $F_j \to \infty$, and as $p_j \to p_{j-1}$, $F_j \to -\infty$.  This applies to $j = 1$ or $n$ as well, if we consider $p_0 = 0$ (as $k  - 1 \geq 2\varepsilon_0$ given our assumptions) and $p_{n+1} = \infty$.

Thus, by the intermediate value theorem, for any choice of $p_2, ..., p_n$, there exists a $p_1$ making $F_1 = 0$.  Assuming this to be the case, we can apply the same argument to $p_2$ to find a value such that $F_2 = 0$.  Continuing for each $j$, always assuming that the previous variables will change keeping the previous $F_l$ all 0, we obtain values of all $p_j$ making all $F_j = 0$.  The only problems that could arise would be if $p_1 \to 0$ or $p_n \to \infty$.  This however is impossible for the same reason in each case.  

If $p_1 \to 0$, then the only way for $F_1$ to vanish (and to not be infinite) is if $p_2 \to 0$ as well.  In that case, $p_3 \to 0$ for $F_2$ to vanish, and so on for each $j$.  Then $p_n \to 0$, in which case, $F_n \to -\infty$.

Therefore there exists a balanced configuration.  To see that it will also be non-degenerate, we show that the Jacobian matrix is invertible for all possible (distinct) values of $p_j$.  Indeed, we have the following:

\begin{align*}
    p_j\frac{\partial (p_jF_j)}{\partial p_j} & = 2p_j^2 + \sum_{l \neq j} \frac{k^2p_j^kp_l^k}{(p_j^k - p_l ^k)^2},\\
   p_l \frac{\partial (p_jF_j)}{\partial p_l} & =- \frac{k^2p_j^{k}p_l^{k}}{(p_j^k - p_l ^k)^2}.
\end{align*}

To see that this matrix is non-singular we will show that it is diagonally dominant.  Indeed, for each $j$, we have:

\begin{align*}
    \left|p_j\frac{\partial (p_jF_j)}{\partial p_j}\right| - \sum_{l \neq j} \left| p_l \frac{\partial (p_jF_j)}{\partial p_l}\right| & = 2p_j^2 + \sum_{l \neq j} \frac{k^2p_j^kp_l^k}{(p_j^k - p_l ^k)^2} -  \sum_{l \neq j} \frac{k^2p_j^kp_l^k}{(p_j^k - p_l ^k)^2} > 0.
\end{align*}

Hence, the Jacobian has nonzero determinant, and the configuration is balanced.
\end{proof}

\begin{remark}
When $k = 2$, the solutions are the positive roots of the $n^{th}$ Hermite polynomials, as shown in \cite{tw1}.  This method fails to prove the existence of balanced configurations corresponding to the indefinite case of \cite{tw1} with higher symmetry, and numerical evidence indicates that they do not exist except for a handful of exceptions.
\end{remark}

\begin{proposition}
\label{ExoticCHM}
Consider a configuration with $n_1$ an odd integer and $n_2 \geq 1$, and with $\varepsilon_{1j} = 1$ (resp. $-1$) for $j$ odd (even) and $\varepsilon_{2j} = -1$ for all $j$.  Then

\begin{itemize}
    \item if $n_1 = 1$, then there exists a balanced configuration with $\varepsilon_0 = 0$ or $-1$ for any $k \geq 2$ and with $\varepsilon_0 = 1$ for any $k \geq 4$;
    \item if $n_1 \geq 3$, then there exists an balanced configuration with $\varepsilon_0 = 0$ or $-1$ for $k = 2$ and with $\varepsilon_0 = 1$ for $k = 4$.
\end{itemize}
\end{proposition}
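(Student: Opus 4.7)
The plan is to extend the nested intermediate value theorem argument of Proposition \ref{Hermite}. Since $\varepsilon_{2j}=-1$ for every $j$, the $n_2$ equations $F_{2j}=0$ have exactly the Hermite structure: treating the $p_{1j}$'s as parameters, the boundary asymptotics $F_{2j}\to\pm\infty$ as $p_{2j}$ approaches its neighbours $p_{2,j\pm 1}$ (with $p_{20}=0$ and $p_{2,n_2+1}=\infty$) are unaffected by the additional bounded, regular contributions coming from the $p_{1l}$'s on the other ray. Hence the sequential IVT of Proposition \ref{Hermite} produces continuous implicit functions $p_{2j}=p_{2j}(p_{11},\ldots,p_{1n_1})$.

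For $n_1=1$, substitution leaves the single equation $F_{11}(p_{11})=0$. As $p_{11}\to 0^+$ the interaction terms decay like $p_{11}^{k-1}$ and the leading behaviour is $((k-1)+2\varepsilon_0)/(2p_{11})$; as $p_{11}\to\infty$ the linear term $p_{11}$ dominates. In each stated case (namely $\varepsilon_0\in\{0,-1\}$ with $k\geq 2$, or $\varepsilon_0=+1$ with $k\geq 4$), IVT on $(0,\infty)$ produces a zero: when $\varepsilon_0=-1$ the two limits are of opposite sign directly; when $\varepsilon_0\geq 0$ one exploits the attraction term $-\sum_l k p_{11}^{k-1}/(p_{11}^k+p_{2l}^k)$ to produce a negative intermediate value of $F_{11}$ between the two positive asymptotes.

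For $n_1\geq 3$, apply a nested IVT to $p_{1,n_1},p_{1,n_1-1},\ldots,p_{11}$ in this order. Because consecutive $\varepsilon_{1j}$ have opposite signs, each non-boundary equation produces opposite-sign divergences of $F_{1j}$ at the two ends of $(p_{1,j-1},p_{1,j+1})$, so IVT traps a zero. The outermost $p_{1,n_1}$ (charge $+1$) is balanced between the $-\infty$ coming from $p_{1,n_1-1}$ and the $+\infty$ coming from the linear term. The delicate innermost step for $p_{11}$ has $F_{11}(p_{12}^-)\to+\infty$ (since $\varepsilon_{12}=-1$) and $F_{11}(0^+)$ with sign $(k-1)+2\varepsilon_0$. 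The restrictions $k=2$ for $\varepsilon_0\in\{0,-1\}$ and $k=4$ for $\varepsilon_0=+1$ are precisely the cases in which, together with the implicit $p_{2l}(p_{1\cdot})$ found earlier, one can still produce a sign change of $F_{11}$ on $(0,p_{12})$ and close the induction.

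Non-degeneracy will be verified, as in Proposition \ref{Hermite}, by showing diagonal dominance of the matrix $\bigl(p_{i'j'}\,\partial(p_{ij}F_{ij})/\partial p_{i'j'}\bigr)$ using the explicit formulas listed after that proposition: each off-diagonal entry equals, up to sign, a single summand of the diagonal entry on its row, so the diagonal beats the row sum of off-diagonals by the positive margin $2p_{ij}^2$. The main obstacle is the coupled nested IVT of the $n_1\geq 3$ case: one must maintain the continuous dependence of $p_{2j}(p_{1\cdot})$ throughout the sign-change argument for $p_{11}$, and it is only under the narrow admissible pairs $(k,\varepsilon_0)$ that the marginal boundary asymptotics allow the innermost IVT to close.
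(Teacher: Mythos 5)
The overall framework of your proposal---a nested intermediate-value argument building on Proposition \ref{Hermite}, isolating a single one-variable equation $F_{11}=0$ at the innermost step---is indeed the same skeleton as the paper's proof. However, the crucial step is missing.

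For the case $n_1=1$ you assert that when $\varepsilon_0=-1$ the two asymptotics of $F_{11}$ at $0^+$ and $\infty$ have opposite signs ``directly.'' But the boundary term is $\frac{(k-1)+2\varepsilon_0}{2p_{11}}=\frac{k-3}{2p_{11}}$, which is negative only for $k=2$; for $k\geq 3$ both limits are nonnegative, so the naive IVT already fails for $\varepsilon_0=-1$, $k\geq 3$ (a case the proposition explicitly covers). When you reach $\varepsilon_0\in\{0,1\}$ you say one ``exploits the attraction term to produce a negative intermediate value'' but give no argument for why such a dip exists. The heart of the paper's proof is exactly here and you do not have it: the paper observes that as $p_{11}\to 0$ the implicit solutions $p_{21}$ (and $p_{12}$ when $n_1\geq 3$) do \emph{not} stay bounded away from zero---they degenerate at proportional rates $p_{21}\to b\,p_{11}$, $p_{12}\to a\,p_{11}$. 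One then solves the balance equations $F_{21}=0$ (and $F_{12}=0$) in this scaling limit to pin down $b^k$ (and $a^k$) explicitly, plugs back in, and computes that $F_{11}\to-\infty$ along this degeneration (e.g.\ $F_{11}=-\tfrac{1}{p_{11}}$ in the $n_1=1$ case). The positivity requirement on $a^k,b^k$ is precisely what forces the restrictions $k\geq 4$ for $\varepsilon_0=1$ and $k=2$ (resp.\ $k=4$) when $n_1\geq 3$. Your proposal treats the $p_{2l}$'s as staying regular as $p_{11}\to 0$, which is false and is why your IVT cannot close.

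Your non-degeneracy claim is also incorrect (though it is not actually part of the statement, which asserts only \emph{balanced}). Diagonal dominance of $\bigl(p_{i'j'}\,\partial(p_{ij}F_{ij})/\partial p_{i'j'}\bigr)$ works in Proposition \ref{Hermite} precisely because all charges are $-1$, so every off-diagonal summand enters the diagonal with a $+$ sign and the margin $2p_{ij}^2$ survives. In the mixed-sign configuration here, the contributions to the diagonal entry from the opposite-ray points (charge $-1$, sign flipped by the $\pm\varepsilon_{i'j'}$ rule) are \emph{negative}, so the diagonal need not dominate the row sum of $|$off-diagonals$|$. The paper makes this explicit in Remark \ref{nond}: ``We cannot apply the same argument as in theorem \ref{Hermite} to show non-degeneracy of these configurations,'' and instead verifies non-degeneracy only by direct computation in the low-complexity cases of interest.
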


\begin{proof}
The principle behind this proof is the same as that of proposition $\ref{Hermite}$, using successive intermediate value arguments,with some additional steps.  

The same argument shows that given any $p_{11}>0$ there exist values of $p_{12}, ... p_{1 n_1}$, and $p_{21}, ..., p_{2 n_2}$ making all balance functions but $F_{11}$ vanish.  If we consider $F_{11}$ and vary $p_{11}$ leaving all other variables are fixed, then we have,

\begin{itemize}
    \item $F_{11} \to \infty$ as $p_{11} \to p_{12}$ from the dominant term $\frac{-kp_{11}^{k-1}}{p_{11}^k-p_{12}^k}$ 
    \item $F_{11} \to \pm\infty$ as $p_{11} \to 0$ from the dominant term $\frac{k-1 + 2\varepsilon_0}{2p_{11}}$.
\end{itemize}

Therefore, if $\varepsilon_0 = -1$ and $k = 2$, the limits have opposite signs and we can complete the argument to get a solution.  However, in every other case, it fails since the function is positive at both extremes.

Fortunately, when all other variables change with $p_{11}$, we do get the desired limit.  This is because as $p_{11} \to 0$, there are solutions to the other equations when the variables $p_{2 1}$, and $p_{1 2}$ if $n_1 > 1$, approach 0 at proportional rates: $p_{12} \to a p_{11}$, and  $p_{21} \to b p_{11}$.  

These will of course not be solutions to all balance equations, since they cannot all be solved when variables degenerate, as in theorem \ref{Hermite}.  That is not our goal, however; we wish to show that $F_{11}$ becomes negative as $p_{11} \to 0$.

We will consider the two cases separately.  

\begin{itemize}
    \item If $n_1 = 1$, then the variables that vanish are $p_{11}$ and $p_{21}$.  If we consider $F_{21}$, all terms vanish except,
    
    \begin{align*}
        F_{21} &= \frac{-(k-1) + 2\varepsilon_0}{2p_{21}} + \frac{k p_{21}^{k-1}}{p_{21}^k + p_{11}^k} = \frac{-(k-1) + 2\varepsilon_0}{2b p_{11}} + \frac{k b^{k-1}p_{11}^{k-1}}{p_{11}^k(b^k + 1)}.
    \end{align*}
    
    Solving for $b$, we see this term will vanish only if $b^k = \frac{k - 1 - 2\varepsilon_0}{k + 1 + 2\varepsilon_0}$.   Note that since we assume $b >0$, we need $k \geq 4$ for $\varepsilon_0 = 1$ and $k \geq 2$ otherwise.  In any case, again looking at non-vanishing terms, we have:
    
    \begin{align*}
         F_{11} &=  F_{11} - b F_{21}\\
         &= \frac1{p_{11}} \left(\frac{k-1 + 2\varepsilon_0}2 - \frac k{1 + b^k} + \frac{k-1 - 2\varepsilon_0}2 - \frac {kb^k}{1 + b^k}  \right) = -\frac1{p_{11}}
    \end{align*}
    
    Therefore, $F_{11}< 0$ as $p_{11}\to 0$, as desired. 
    
    \item If $n_1\geq 3$, then we solve $F_{12} = F_{21} = 0$ for $a$ and $b$ in a similar fashion.  One can easily verify that these equations reduce to two quadratic equations in $a^k$ and $b^k$, and have positive solutions if  $k = 2$ for $\varepsilon_0 = 0$ and if $k = 4$ for $\varepsilon_0 = 1$ (recall that we need not consider the case where $\varepsilon_0 = -1$ since we already have a balanced configuration).  We obtain, near $p_{11} = 0$,
    
    \begin{align*}
        F_{11}         &=F_{11} - aF_{12} - b F_{21}\\
        &=\frac1{p_{11}} \left(\frac{k-1 + 2\varepsilon_0}2 - \frac{k }{1-a_k}- \frac k{1 + b^k}    + k - 1 - 2\varepsilon_0 \right. \\
        &\left.\qquad \qquad + \frac{k a^k}{1-a_k} + \frac{k a^k}{a_k + b^k} + \frac{k a^k}{a_k + b^k}- \frac {kb^k}{1 + b^k}\right)\\
        &=\frac{-3 - 2\varepsilon_0 + k}{2p_{11}},
    \end{align*}
which is negative for for the desired values of $k$. 
\end{itemize}

Therefore, we can apply the intermediate value theorem to $F_{11}$ to obtain a balanced configuration.

\begin{remark}\label{nond}
We cannot apply the same argument as in theorem \ref{Hermite} to show non-degeneracy of these configurations, but numerical evidence suggests it to always be the case.  It is a simple matter to show non-degeneracy in the case $n_1 = n_2 = 1, \varepsilon_0 = 0$, corresponding to the Callahan-Hoffman-Meeks surface, as well as the cases with $\varepsilon_0 = -1$ or $1$.  Indeed,

\begin{align*}
    \det \left(p_{i'j'}\frac {\partial (p_{ij} F_{ij})} {\partial p_{i'j'}} \right) &=4p_{11}^2p_{21}^2 + 2\frac{k^2 p_{11}^k p_{21}^k}{(p_{11}^k + p_{21}^k)^2}(p_{11}^2-p_{21}^2)\\
&= 4p_{11}^2p_{21}^2 + 2\frac{k^2 p_{11}^k p_{21}^k}{(p_{11}^k + p_{21}^k)^2} >0,
\end{align*}
where we use that

\begin{align*}
    0 = p_{11}F_{11} - p_{21}F_{21} = p_{11}^2 - p_{21}^2 - 1.
\end{align*}

\end{remark}

\begin{remark}\label{CHMpm}

The surfaces with $n_1 = n_2 = 1$, $\varepsilon_0 \neq 0$ are of interest to us, as they are the simplest surfaces we know to have two helicoidal ends in the natural quotient, besides the helicoid and genus $g$ helicoids.  Indeed, in these cases, $N = \varepsilon_0$, and by equation (\ref{genus}) the surfaces have genus 2.  We name these cases CHM$-$ for $\varepsilon_0 = -1 \ (k = 2)$ and CHM+  for $\varepsilon_0 = 1 \ (k = 4)$ and will discuss them in the next section.
\end{remark}

\begin{remark}\label{CHMex}

The case where $n_1 = 3$, $n_2=1$, and $\varepsilon_0 = 0$ corresponds to the surface of Theorem \ref{CHM7}, which we name CHM7 since it has planar ends and genus 7 in the quotient.  A computer algebra system verifies that the configuration is non-degenerate.
\end{remark} 

\end{proof}

\section{Additional Questions}

\subsection{Connection with fluid dynamics}

The balance equations for our helicoidal surfaces are of interest in the field of fluid dynamics.  Indeed, the equations of motion of $n$ vortices with circulations $\Gamma_j$, at positions $p_j$ are given by:

\begin{align*}
    \overline{\frac{dp_j}{dt}} = \frac1{2\pi i} \sum_{l \neq j}\frac{\Gamma_j}{p_j - p_l}.
\end{align*}

Thus, if the vortices are rotating at constant angular velocity $\frac1 {2\pi}$, the equations become,

\begin{align*}
    \overline{p_j} = \sum_{l \neq j}\frac{\Gamma_j}{p_j - p_l}.
\end{align*}

Finally, if we assume that all vorticities are $\Gamma_j = \varepsilon_j = \pm 1$ (which we can do by rescaling), we arrive at our balance equations.

This begs the question, is there a meaningful connection between SMIMS that limit to parking garage structures and vortices?  

Various physicists have used numerical and algebraic work to find solutions to these equations \cite{aref07a}.  The solutions they find have a small overlap with ours,  mostly obtaining less symmetric solutions which would correspond to higher genus surfaces. For example, there is a balanced configuration with no dihedral symmetry. Numerical evidence suggests this configuration is non-degenerate, in which case there is a SMIMS which is only symmetric under screw motions (see Conjecture \ref{nosym}).  

\def\fw{3.5in}
\begin{figure}[H]
 \begin{center}
   {\includegraphics[width=\fw]{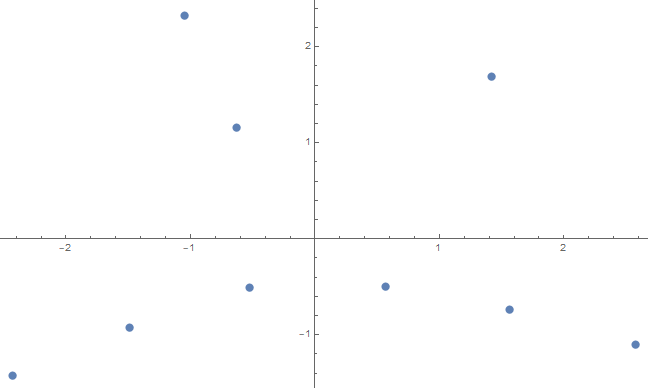}}
 \end{center}
 \caption{A balanced configuration that is not dihedrally symmetric.  $\varepsilon_j = -1$ for all $j$.}
 \label{fig:CHM+-}
\end{figure}

 \subsection{Long-term behavior}

The three new families mentioned in remarks \ref{CHMpm} and \ref{CHMex} illustrate the value of understanding long-term behavior of SMIMS.

The \textit{CHM}+ family is that mentioned in theorem \ref{CHM+}.  As discussed in remark  \ref{CHMpm}, it has genus 2 and two helicoidal ends in the natural quotient, and since $N = 1 > 0$, it is in the helicoid class.  However, the parking garage limit is different than that of the standard genus 2 helicoid.  So if it can be twisted to a limit, like the known genus $g$ helicoids, we expect it to limit to a genus 2 helicoid distinct from the known one.

\def\fw{1.8in}
\def\fww{2.in}
\begin{figure}[H]
 \begin{center}
   \subfigure[\textit{CHM}$+$ for $k = 4$]{\includegraphics[width=\fw]{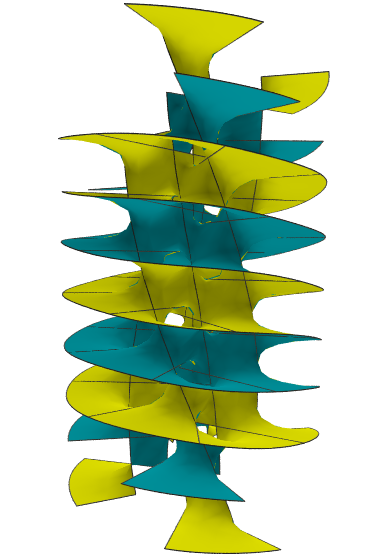}}
   \hspace{1in}
   \subfigure[\textit{CHM}$-$ for $k = 2$]{\includegraphics[width=\fww]{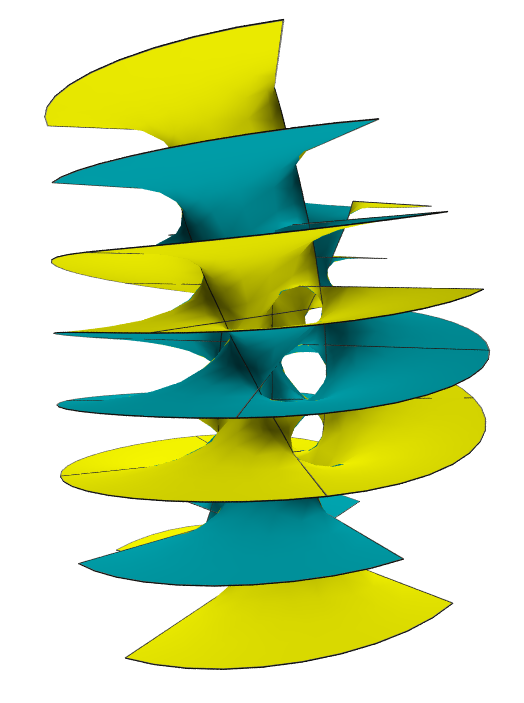}}
 \end{center}
 \caption{Near the nodal limit}
 \label{fig:CHM+-}
\end{figure}

On the other hand, the \textit{CHM}$-$ family is Scherk-type and untwists in $t$.  Since it has only two ends in the natural quotient, we know that it cannot untwist to a translation-invariant surface with only two parallel Scherk ends.  It thus stops untwisting before the ends become vertical, but we do not know what obstructs further untwisting.

Finally, in the planar class, if the \textit{CHM7} surface can be untwisted, it could become another translation-invariant surface with two ends in the quotient.

\def\fw{3in}
\begin{figure}[H]
 \begin{center}
   {\includegraphics[width=\fw]{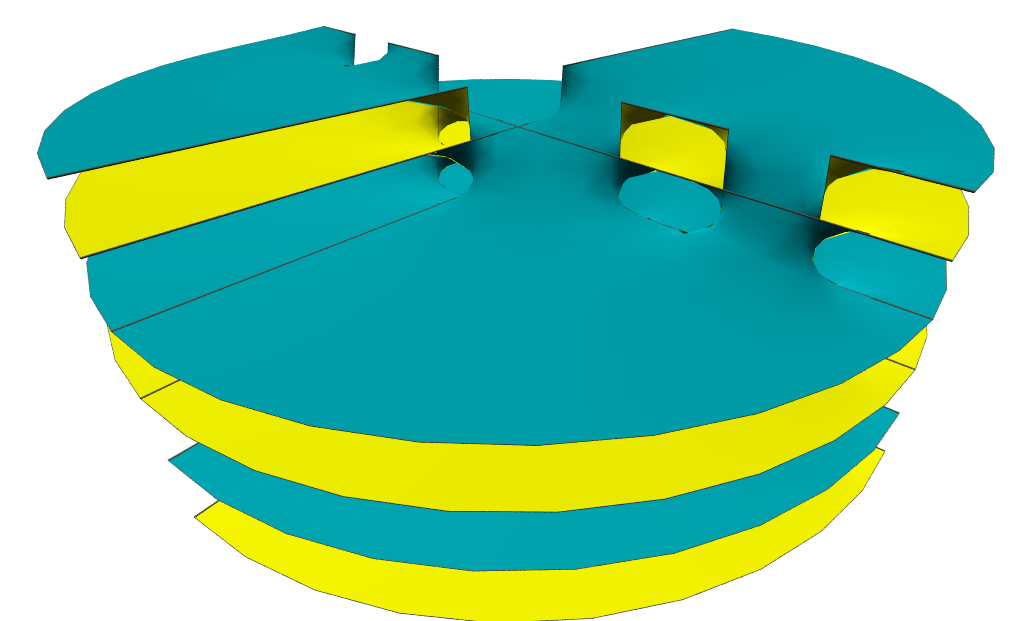}}
 \end{center}
 \caption{\textit{CHM7} near the nodal limit}
 \label{fig:CHM7fig}
\end{figure}

\bibliography{minlit}
\bibliographystyle{alpha}

\end{document}